\numberwithin{equation}{section}
\newtheorem{thm}{Theorem}[section]
\newtheorem{theorem}[thm]{Theorem}
\newtheorem{proposition}[thm]{Proposition}
\newtheorem{lemma}[thm]{Lemma}
\newtheorem{corollary}[thm]{Corollary}
\theoremstyle{definition}
\newtheorem{definition}[thm]{Definition}
\theoremstyle{remark}
\newtheorem{remark}[thm]{Remark}
\newcommand{\NN}{\mathbb{N}}
\newcommand{\ZZ}{\mathbb{Z}}
\newcommand{\Hom}{\mathrm{Hom}}
\newcommand{\cref}{}
\newcommand{\RR}{\mathbb{R}}
\newcommand{\NNN}{\mathcal{N}}
\newcommand{\PPP}{\mathcal{P}}
\newcommand{\tf}{\tilde{f}}
\newcommand{\tg}{\tilde{g}}
\renewcommand{\th}{\tilde{h}}
\newcommand{\tv}{\tilde{v}}
\newcommand{\tw}{\tilde{w}}
\newcommand{\tgamma}{\tilde{\gamma}}
\newcommand{\teta}{\tilde{\eta}}
\newcommand{\tx}{\tilde{x}}
\newcommand{\ty}{\tilde{y}}
\newcommand{\tz}{\tilde{z}}
\newcommand{\tG}{\tilde{G}}
\newcommand{\tH}{\tilde{H}}
\newcommand{\Homeo}{\mathrm{Homeo}}
\title[Hom complexes of graphs whose codomains are square-free]{Hom complexes of graphs whose codomains are square-free}
\author{Takahiro Matsushita}
\address{Department of Mathematical Sciences, Faculty of Science, Shinshu University, Matsumoto, Nagano 390-8621, Japan}
\email{matsushita@shinshu-u.ac.jp}
\subjclass[2020]{Primary 05C15; Secondary 55U10}% Subject code(s)
\keywords{Hom complex; square free graph; homomorphism reconfiguration}
\begin{document}

\begin{abstract}
The Hom complex $\Hom(G, H)$ of graphs is a simplicial complex associated to a pair of graphs $G$ and $H$, and its homotopy type is of interest in the graph coloring problem and the homomorphism reconfiguration problem. In this paper, we show that if $G$ is a connected graph and $H$ is a square-free connected graph, then every connected component of $\Hom(G, H)$ is homotopy equivalent to a point, a circle, $H$ or a connected double cover over $H$. We also obtain a certain relation between the fundamental group of $\Hom(G,H)$ and realizable walks studied in the homomorphism reconfiguration problem.
\end{abstract}

\maketitle

\section{Introduction}
\subsection{Background and the main result}
Lov\'asz's proof \cite{Lovasz} of the Kneser conjecture is widely known as one of the most significant achievements in the application of algebraic topology to combinatorics. He introduced the neighborhood complex $\NNN(G)$, which is a simplicial complex associated to a graph $G$, and showed that there is a profound relation between the chromatic number of $G$ and the homotopy type of $\NNN(G)$. The neighborhood complex is a central object of topological combinatorics, and has been extensively studied (see \cite{Csorba1, MZ, MatsushitaDCG, MW, Walker, WrochnaJCTB2, Zivaljevic} for example).

The Hom complex $\Hom(G,H)$ of graphs is a simplicial complex associated to a pair of graphs $G$ and $H$, which is a generalization of the neighborhood complex $\NNN(H)$ of $H$. As is the case of the neighborhood complex, the Hom complex has also been studied in the context of the graph coloring problem. For example, when $G$ belongs to a certain class of graphs, called homotopy test graphs, there is a close relation between the chromatic number of $H$ and the homotopy type of $\Hom(G,H)$. Lov\'asz's bound for the chromatic number \cite{Lovasz} implies that $K_2$ is a homotopy test graph. Babson and Kozlov \cite{BK1, BK2} showed that complete graphs and cycle graphs having at least three vertices are homotopy test graphs. In such a context of the graph coloring problem, the homotopy types of Hom complexes have been studied by many authors (see \cite{DS1, MatsushitaEJC1, MatsushitaEJC2, SchultzAdvances, SchultzJCTA} for example). Recently, Hom complexes are also of interest in the study of the homomorphism reconfiguration (see \cite{DochtermannEJC, DS2, WrochnaSIAM}).

The purpose of this paper is to investigate the homotopy type of $\Hom(G,H)$ whose codomain $H$ is square-free. Since our main theorem (Theorem~\ref{theorem main}) is a generalization of the case that $H$ is a cycle graph, we recall the previous work on this case. The homotopy types of Hom complexes between cycles were determined by \v{C}uki\'c and Kozlov \cite{CK}. Recently, Fujii et al. \cite{Fujii} studied the homotopy type of $\Hom(G,H)$ when $H$ is a cycle graph, and proved the following theorem:

\begin{theorem}[Fujii et al. \cite{Fujii}] \label{theorem cycle}
Let $G$ be a finite connected graph and $n$ an integer at least $3$. Then every component of $\Hom(G,C_n)$ is contractible or homotopy equivalent to a circle.
\end{theorem}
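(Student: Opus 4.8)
The plan is to study $\Hom(G,C_n)$ by relating it to the Hom complexes of the graphs that cover $C_n$: the longer cycles $C_{kn}$ and the two--sided infinite path $\ZZ$. The case $n=4$ must be treated apart, since $C_4$ is the square and is not square--free. If $G$ has no edges then $\Hom(G,C_n)$ is a join of simplices and is contractible, so assume $G$ is connected with at least one edge; and if $n=4$, then $C_4$ folds onto a path and then onto $K_2$, so by the invariance of the Hom complex under folding the codomain $\Hom(G,C_4)\simeq\Hom(G,K_2)$, which is empty or $S^0$, so every component is contractible. From now on assume $n\neq4$.

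The first observation is that, for such $G$ and $n$, every cell $\eta$ of $\Hom(G,C_n)$ is \emph{narrow}: each $\eta(v)$ has at most two elements, and whenever $\eta(v)=\{i-1,i+1\}$ one has $\eta(u)=\{i\}$ for every neighbour $u$ of $v$. Since a graph covering restricts to a bijection on neighbourhoods, narrowness forces every cell of $\Hom(G,C_n)$ to lift, if it lifts at all, essentially uniquely along any graph covering $p\colon\widetilde H\to C_n$ --- here $\widetilde H=\ZZ$ or $\widetilde H=C_{kn}$ --- so that $\Hom(G,p)\colon\Hom(G,\widetilde H)\to\Hom(G,C_n)$ is a covering onto the union of the connected components that lift. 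Narrowness also implies that the homomorphism $\pi_1(G)\to\pi_1(C_n)=\ZZ$ induced by a homomorphism $G\to C_n$ is constant on each cell, hence defines a locally constant invariant $w\colon\pi_0(\Hom(G,C_n))\to\Hom(\pi_1(G),\ZZ)$; then $\Hom(G,\ZZ)\to\Hom(G,C_n)$ is the regular covering onto $\{w=0\}$ with deck group $\ZZ$ (post--composition with the shift), and $\Hom(G,C_{kn})\to\Hom(G,C_n)$ is the covering onto $\{\mathrm{im}(w)\subseteq k\ZZ\}$.

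The second observation is that every connected component of $\Hom(G,\ZZ)$ is contractible. Since $G$ is finite, $\Hom(G,\ZZ)$ is the directed union of the subcomplexes $\Hom(G,[a,b])$ over finite intervals $[a,b]\cong P_{b-a+1}$, each of which is homotopy equivalent to $\Hom(G,K_2)$ (finite paths dismantle to $K_2$) and hence has contractible components; therefore each component of $\Hom(G,\ZZ)$ is a nested union of contractible subcomplexes, so it is weakly contractible, hence contractible. Moreover every homomorphism $G\to\ZZ$ can be reconfigured --- by repeatedly pushing local maxima down and local minima up --- into a fixed bounded window, so $\Hom(G,\ZZ)$ has only finitely many components. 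Combining the two observations: if $\mathcal C$ is a component of $\Hom(G,C_n)$ with $w(\mathcal C)=0$, then $\mathcal C$ is covered by a contractible component $\widetilde{\mathcal C}$ of $\Hom(G,\ZZ)$ --- which is therefore the universal cover of $\mathcal C$ --- and the shift permutes the finitely many components of $\Hom(G,\ZZ)$, so an infinite cyclic subgroup of the deck group stabilises $\widetilde{\mathcal C}$ and acts on it freely and properly discontinuously with quotient $\mathcal C$; hence $\mathcal C$ is a $K(\ZZ,1)$, that is, $\mathcal C\simeq S^1$. (In particular no $w=0$ component can be contractible, for otherwise the shift would move $\widetilde{\mathcal C}$ through infinitely many components.) A component with $\mathrm{im}(w)=k\ZZ$, $k\geq1$, is by the same argument covered by a component of $\Hom(G,C_{kn})$ on which the residual map to $\pi_1(C_{kn})=\ZZ$ is \emph{surjective}.

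So everything reduces to the ``maximally winding'' components --- those on which $\pi_1(G)\to\ZZ$ is onto --- and I expect this to be the main obstacle, because such components are reached by no covering of the codomain. Here one must instead pass to a covering of the domain: when $w$ is surjective the pullback $\widetilde G:=G\times_{C_n}\ZZ$ is a \emph{connected} covering of $G$ with deck group $\ZZ$, carrying a tautological homomorphism $\widetilde G\to\ZZ$ lifting the given $f$, and precomposition with $\widetilde G\to G$ identifies $\mathcal C$ with the fixed subcomplex of the deck $\ZZ$--action on a suitable component of $\Hom(\widetilde G,C_n)$. The difficulty is that $\widetilde G$ is infinite, so the narrow--cell and asphericity arguments above do not apply verbatim; what is needed is a version of them compatible with the cocompact $\ZZ$--action, from which one would read off the dichotomy --- $\mathcal C$ is contractible exactly when the action fixes a component (the ``rigid'' case, modelled on $\mathrm{id}\colon C_n\to C_n$) and $\mathcal C\simeq S^1$ otherwise. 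Throughout, it is the large girth of $C_n$ --- the cycle incarnation of square--freeness --- that keeps the cells narrow and makes these covering--space comparisons work.
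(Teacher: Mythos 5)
Your argument splits into two cases, and the first one is done correctly: for components with trivial winding invariant $w$, the pushforward $\Hom(G,\ZZ)\to\Hom(G,C_n)$ is indeed a poset covering map (for $n\neq 4$ the map $\ZZ\to C_n$ is a $2$-covering, so this follows from the poset-covering machinery in Section~3 of the paper), and the contractibility of $\Hom(G,\ZZ)$ by exhausting with $\Hom(G,[a,b])\simeq\Hom(G,K_2)$ is clean. This part of your argument is in the same spirit as, and somewhat simpler than, the paper's general contractibility theorem (Theorem~\ref{theorem contractible 1}), because you can fold the finite paths whereas the paper has to deal with the infinite graph $\tG$.

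The genuine gap is the ``maximally winding'' case, which you explicitly leave unresolved, and that case cannot be absorbed by the reduction to $C_{kn}$: a component of $\Hom(G,C_n)$ with $\mathrm{im}(w)=k\ZZ$ is covered by a component of $\Hom(G,C_{kn})$ on which the winding is surjective, so the reduction is circular. Passing to the $\ZZ$-cover $\widetilde G=G\times_{C_n}\ZZ$ and looking at $\Gamma$-\emph{invariant} multi-homomorphisms $\widetilde G\to C_n$ is not the right move, because that fixed-point subcomplex is literally $\Hom(G,C_n)$ again, a tautology, and fixed-point sets behave badly for homotopy type in any case. What the paper does instead (Section~\ref{section construction}) is to lift \emph{both} ends and consider the poset $\Hom^\Gamma(\tG,\tH)$ of $\Gamma$-\emph{equivariant} multi-homomorphisms to the universal $2$-cover $\tH$; this is a poset covering of $\Hom(G,H)$ (Proposition~\ref{proposition Hom covering}), and the $\Gamma$-equivariance is exactly what makes the distance function $u_\eta$ descend to the finite graph $G$, so it is bounded and the filtration argument of Section~\ref{section contractibility} goes through even though $\tG$ is infinite. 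This replaces your missing ``cocompactness-compatible'' contractibility argument. Finally, your conjectured dichotomy for the surjective-winding case is incorrect: $\Hom(C_n,C_n)_{\mathrm{id}}$ is homotopy equivalent to $S^1$, not a point, for $n\neq 4$. Indeed, by Theorem~\ref{theorem exact}, Corollary~\ref{corollary K} and Theorem~\ref{theorem Wrochna}, for $H=C_n$ $(n\neq 4)$ one always has $\pi_1(\Hom(G,C_n)_f)\cong\Pi(f,v)\cong\ZZ$ (since the image $K\subseteq\pi_1(C_n)\cong\ZZ$ is either trivial or infinite cyclic), so every nonempty component is $\simeq S^1$; contractible components occur only for $n=4$.
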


By the famous fold theorem (see \cite{KozlovPAMS} for example), one can show that every connected component of $\Hom(G,C_4)$ is contractible. Hence their contribution is to show the case $n \ne 4$. Our main theorem is a generalization of this case:

\begin{theorem} \label{theorem main}
Let $G$ be a finite connected graph and $H$ a finite square-free connected graph. Then every component of $\Hom(G,H)$ is homotopy equivalent to one of the following spaces: One point space, a circle, the graph $H$ or a connected double cover of $H$. Moreover, if $G$ is non-bipartite, then every component of $\Hom(G,H)$ is contractible or homotopy equivalent to a circle.
\end{theorem}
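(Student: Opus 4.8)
The plan is to combine a structural description of the cells of $\Hom(G,H)$ forced by square-freeness with covering space theory over $H$. We may assume $G$ has an edge, as otherwise $\Hom(G,H)$ is a single simplex.

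\emph{Local structure.} The starting point is the following consequence of square-freeness. For a cell $\eta$ of $\Hom(G,H)$ put $A_\eta=\{v\in V(G):|\eta(v)|\ge 2\}$. Then $A_\eta$ is independent in $G$, and for each $v\in A_\eta$ there is a single vertex $c(v)\in V(H)$ with $\eta(u)=\{c(v)\}$ for every $G$-neighbour $u$ of $v$ and $\eta(v)\subseteq N_H(c(v))$; otherwise two coordinates of size $\ge 2$ on an edge of $G$, or two common neighbours of two distinct vertices of $H$, would produce a $4$-cycle in $H$. Thus each cell of $\Hom(G,H)$ is a product of simplices, each lying in the star of a vertex of $H$ and glued to the rest only along singleton coordinates, and two homomorphisms span an edge of $\Hom(G,H)$ exactly when they differ at one vertex $v$ on whose $G$-neighbourhood they agree with constant value $c$, the two values lying in $N_H(c)$.

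\emph{Component invariant and reduction to a graph.} From the edge description, homomorphisms in a component $X$ are $\times$-homotopic, hence homotopic as maps $|G|\to|H|$, so the image $\Gamma_X$ of $\pi_1(G)\to\pi_1(H)$ (defined up to conjugacy, $G$ being connected) depends only on $X$; let $p_X\colon H_X\to H$ be the associated connected cover, which is again square-free since a $4$-cycle of $H_X$ would cover a backtracking closed walk of $H$. I would then show every component $X$ is homotopy equivalent to a $1$-dimensional complex, hence aspherical and determined up to homotopy by $\pi_1(X)$, by an acyclic matching read off from the local structure: pair a cell $\eta$ with $A_\eta\neq\emptyset$ with the cell obtained by enlarging or contracting it along its least available blow-up coordinate. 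Because blow-ups are confined to independent sets and to stars of $H$, this matching is well defined and leaves critical only cells of dimension $\le 1$. (For $G=K_2$ it recovers, up to homotopy, the bipartite double cover of $H$, which is disconnected precisely when $H$ is bipartite.)

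\emph{Identifying $\pi_1(X)$.} Homomorphisms in $X$ lift along $p_X$, so $\Hom(G,H_X)\to\Hom(G,H)$ restricts to a covering $\tilde X\to X$, where every homomorphism of $\tilde X$ sends $\pi_1(G)$ onto $\pi_1(H_X)=\Gamma_X$. The crux is to show that a component of a Hom complex into a connected square-free graph on which the homomorphisms are $\pi_1$-surjective is contractible or homotopy equivalent to a circle. Granting this, an analysis of the deck group of $p_X$ shows $\pi_1(X)$ is trivial, infinite cyclic, all of $\pi_1(H)$, or of index $2$ in $\pi_1(H)$; correspondingly $X$ is homotopy equivalent to a point, a circle, the graph $H$, or a connected double cover of $H$. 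Covers of higher degree are excluded because the only homotopical data a square-free codomain transmits through the Hom complex, beyond the winding of a single cycle, is the $\ZZ/2$ already visible for $G=K_2$.

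\emph{Non-bipartite $G$, and the main obstacle.} If $G$ is non-bipartite, fix an odd cycle $C\subseteq G$; for any $f\in X$ the walk $f(C)$ has odd length, so it is not null-homotopic and $\Gamma_X$ is neither trivial nor contained in $\ker(\pi_1(H)\to\ZZ/2)$. This rules out the index-$1$ and index-$2$ cases above — the lift $\tilde X$ must then detect the odd cycle and so be a circle rather than a point — leaving only a point or a circle. The step I expect to be the main obstacle is precisely the dichotomy used here: that a $\pi_1$-surjective component of a Hom complex into a connected square-free graph is contractible or a circle. Its proof should use square-freeness globally, retracting such a component onto the image of a single cycle of $G$ and using square-freeness to forbid independent competing generators of the fundamental group, rather than through the local analysis of the first step.
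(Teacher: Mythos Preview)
Your high-level architecture --- prove asphericity, then compute $\pi_1$ via a covering --- matches the paper's, but the specific cover you choose makes the argument circular. Passing to the cover $H_X$ of $H$ corresponding to $\Gamma_X=f_*(\pi_1(G))$ lands you in $\Hom(G,H_X)$ with $H_X$ again square-free and the lifted map $\pi_1$-surjective; but your ``crux'' dichotomy (that a $\pi_1$-surjective component is contractible or a circle) is exactly the main theorem for $H_X$ in the surjective case, and surjectivity alone does not make it easier --- you still need to know the component is aspherical \emph{and} that its fundamental group is at most cyclic. The paper avoids this by lifting \emph{both} sides: it takes the universal $2$-covers $p_G\colon\tG\to G$ and $p_H\colon\tH\to H$ and works with the poset $\Hom^\Gamma(\tG,\tH)$ of $\Gamma$-equivariant multi-homomorphisms, $\Gamma=\pi_1^2(G,v)$. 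The point of this choice is that for square-free $H$ the graph $\tH$ is a \emph{tree}, and the paper proves directly (Theorem~\ref{theorem contractible 1}, via an explicit filtration and closure operators) that every component of $\Hom^\Gamma(\tG,\tH)$ is contractible. This is the substantive step you are missing; your acyclic-matching sketch for asphericity is plausible but not a substitute, and in any case does not give the ``contractible or circle'' conclusion.

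There is also a mismatch in the deck-group step. The deck group of $H_X\to H$ is $N_{\pi_1(H)}(\Gamma_X)/\Gamma_X$, but the group that actually governs $\pi_1(X)$ is Wrochna's realizable-walk group $\Pi(f,v)$, which the paper shows (Corollary~\ref{corollary commutative}) lies in the \emph{centralizer} $Z_{\pi_1(H)}(\Gamma_X)$, not the normalizer. It is this centralizer constraint, together with the elementary fact that centralizers of non-abelian (resp.\ cyclic) subgroups of a free group are trivial (resp.\ cyclic), that yields the four outcomes; a normalizer analysis of $H_X\to H$ would not. Your handling of the non-bipartite case is correct in spirit --- an odd cycle forces $\Gamma_X\ne 1$, killing the $H$/double-cover options --- but the parenthetical ``$\tilde X$ must then detect the odd cycle and so be a circle rather than a point'' is not right: when $\Gamma_X$ is non-abelian free the component is in fact contractible.
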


\begin{remark}
Soichiro Fujii, Kei Kimura and Yuta Nozaki \cite{FKN} independently proved the above theorem. Since their approach is different from that of this paper, we decided to announce them as separate papers.
\end{remark}

Theorem~\ref{theorem main} follows from Theorem~\ref{theorem precise 1} and Corollary~\ref{corollary precise 1} (for our conventions, see Subsection~\ref{subsection notation}). Note that the homotopy type of a connected double cover of a connected graph $H$ does not depend on the choice of the connected double cover since the homotopy type of a connected graph is determined by its Euler characteristic.
%This is because the homotopy type of a connected graph is determined by its Euler characteristic, and the Euler characteristic of a double cover of $H$ is twice that of $H$.
Furthermore, when $H$ is neither contractible nor homotopy equivalent to a circle, we determine the number of components homotopy equivalent to $H$ or its connected double cover (see Corollary~\ref{corollary precise 1} and Proposition~\ref{proposition number of components}).

Here we discuss the universality problem, which is a fundamental problem in topological combinatorics described as follows: Given a construction $K(G)$ of a simplicial complex from a graph $G$, determine the class of homotopy or topological types of spaces that can arise as $K(G)$ for some graph $G$. For instance, Csorba \cite{Csorba2} showed that for every finite free $\ZZ/2$-simplicial complex $X$, there exists a graph $G$ such that $\Hom(K_2, G) \simeq_{\ZZ / 2} X$. Furthermore, Dochtermann \cite{DochtermannC} proved that if a connected graph $T$ contains at least one edge, then for any finite simplicial complex $X$, there exists a (reflexive) graph $G$ such that $X \simeq \Hom(T,G)$.%On the other hand, Bayer et al.\ \cite{Bayer} showed that almost all manifolds are not homeomorphic to the matching complex of a graph.

On the other hand, Theorem~\ref{theorem main} implies that if $H$ is square-free, the class of spaces that can appear as $\Hom(G, H)$ is highly restricted. For instance, when $H$ is square-free, there does not exist a graph $G$ such that $\Hom(G, H) \simeq S^n$ for any $n \ge 2$.

\subsection{Construction of the universal covering} \label{subuniv}

The proof of Theorem~\ref{theorem main} is divided into three parts. First, we construct the universal covering of a connected component of $\Hom(G,H)$. Secondly, we show that the universal covering is contractible. Finally, we determine the fundamental group of each component of $\Hom(G,H)$. Such a rough outline of the proof is common to \cite{Fujii}.

When we consider the universal covering of a space of mappings between topological spaces $X$ and $Y$, it can sometimes be described as a space of equivariant maps between their universal coverings. For example, it is usual to describe the universal covering of the space $\Homeo^+(S^1)$ of orientation preserving homeomorphisms of a circle $S^1$ as follows:
\[ \widetilde{\Homeo^+}(S^1) = \{ f \in \Homeo(\RR) \; | \; \textrm{$f(x+1) = f(x) + 1$ for every $x \in \RR$}\}.\]
Our construction of the universal covering of a connected component of $\Hom(G,H)$ is similar to the geometric intuition above. However, we do not employ the usual covering map theory of graphs, since the usual covering maps are not so compatible with the topology of Hom complexes. Instead, we use the theory of $2$-covering maps, studied in \cite{MatsushitaJMSUT, TW, WrochnaJCTB1}.

Here, we briefly describe our construction of the covering of a component of $\Hom(G,H)$. In \cite{MatsushitaJMSUT}, the author introduced the $2$-fundamental group $\pi_1^2(G)$ of graphs and $2$-covering maps (see Section~\ref{section preliminaries}). Then, as is the case of the covering space theory (see \cite[Section~1.3]{Hatcher}) in classical topology, there are the $2$-coverings $p_G \colon \tG \to G$ and $p_H \colon \tH \to H$ which are \emph{universal}, that is, $\pi_1^2(\tG)$ and $\pi_1^2(\tH)$ are trivial. Let $f \colon G \to H$ be a graph homomorphism. Then the group $\Gamma = \pi_1^2(G)$ acts on $\tG$ and $\tH$, and we have a $\Gamma$-equivariant lift $\tf \colon \tG \to \tH$ of $f$.
%For a graph homomorphism $f \colon G \to H$, there is a lift $\tf \colon \tG \to \tH$, and this $\tf$ is $\Gamma$-equivariant, where $\Gamma = \pi_1^2(G)$.
We consider the $\Gamma$-equivariant Hom complex $\Hom^\Gamma(\tG, \tH)$, which consists of $\Gamma$-equivariant multi-homomorphisms. There is a natural surjective map $\pi \colon \Hom^\Gamma(\tG, \tH) \to \Hom(G,H)$, which turns out to be  a covering map (see Proposition~\ref{proposition Hom covering}).

Our main task of the proof is to show that if $H$ is square-free then this map $\pi$ gives a contractible covering of each component of $\Hom(G,H)$. Let $\Hom(G, H)_f$ denote the component of $\Hom(G,H)$ containing $f$, and $\Hom^\Gamma(\tG, \tH)_{\tf}$ the component of $\Hom^\Gamma(\tG, \tH)$ containing $\tf$. The following theorem is deduced from Theorem~\ref{theorem contractible 1}.

\begin{theorem} \label{theorem contractibility}
If $H$ is square-free, then $\Hom^\Gamma(\tG,\tH)_{\tf}$ is contractible.
\end{theorem}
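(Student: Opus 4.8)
The plan is to prove that $\Hom^\Gamma(\tG, \tH)_{\tf}$ is contractible by showing it is a nonempty CAT($0$)-like poset—more precisely, that it deformation retracts onto a point by a sequence of folds or a discrete Morse argument adapted to the square-free hypothesis on $H$. The key structural input is that $\tH$ is the universal $2$-cover of a square-free graph, hence itself square-free and, crucially, $2$-simply connected: any two walks in $\tH$ with the same endpoints that are homotopic rel endpoints in the $2$-sense are related by a sequence of ``spider moves'' supported on the stars of vertices. Since $H$ is square-free, the only $4$-cycles one could hope to exploit are degenerate (backtracks), so the combinatorics of multi-homomorphisms into $\tH$ becomes rigid in a way that the combinatorics into a general graph is not.

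First I would set up the face poset $P$ of $\Hom^\Gamma(\tG, \tH)_{\tf}$: a cell is a $\Gamma$-equivariant map $\eta$ assigning to each vertex $v$ of $\tG$ a nonempty subset $\eta(v) \subseteq V(\tH)$ such that adjacent vertices go to complete bipartite pairs of subsets, with $\eta \le \eta'$ meaning $\eta(v) \subseteq \eta'(v)$ for all $v$. Contractibility of the order complex will follow if I can produce a closure operator (an order-preserving, idempotent, inflationary map $c \colon P \to P$) whose image is contractible, and then iterate. The natural candidate is ``neighborhood closure'' in $\tH$: because $\tH$ is square-free, for any edge $vw$ of $\tG$ and any cell $\eta$, the set of common neighbors of $\eta(w)$ in $\tH$ behaves like a convex set, and I would use this to push every $\eta$ upward toward a canonical maximal equivariant multi-homomorphism $\widehat{\eta}$ obtained by repeatedly taking $\eta(v) \mapsto \bigcap_{w \sim v} N_{\tH}(\eta(w))$. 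Square-freeness guarantees this intersection stays ``interval-like'' and that the process is monotone and terminates; the fixed points form a sub-poset that I expect to be a tree-like or contractible object because $\pi_1^2(\tH)$ is trivial.

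The main obstacle is to show that the resulting fixed-point poset is genuinely contractible rather than merely connected, and this is where the triviality of $\pi_1^2(\tH)$ must be used in an essential, quantitative way. Concretely, after the closure I expect to be left with maximal cells indexed by graph homomorphisms $\tG \to \tH$ lying in the component of $\tf$, together with their ``merge'' cells; contractibility of this nerve should reduce to the statement that the set of such homomorphisms, with the adjacency relation ``differ by a spider move at one vertex,'' is a connected and simply connected complex—equivalently, that every closed reconfiguration loop in $\Hom(G,H)$ based at $f$ lifts to a closed loop upstairs, which is exactly the defining property of the covering $\pi$ from Proposition~\ref{proposition Hom covering} combined with $\pi_1^2(\tG) = \Gamma$ acting freely. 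I would handle this by a van Kampen / Morse-theoretic bookkeeping argument: discrete Morse matching on $P$ pairing each non-closed cell with its closure-direction expansion, leaving critical cells only among the fixed points, and then a second matching on the fixed-point poset using the tree structure coming from $2$-simple-connectivity of $\tH$.

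Finally, I would note that nonemptiness is immediate since $\tf$ itself is a vertex of $\Hom^\Gamma(\tG,\tH)_{\tf}$, and that equivariance is preserved at every step because the closure operator and the Morse matching are defined using only the $\Gamma$-invariant adjacency data of $\tG$ and $\tH$, so $\Gamma$ permutes the matched pairs and critical cells compatibly. Assembling the two Morse matchings gives a single acyclic matching with a unique critical cell, which yields the contractibility asserted in Theorem~\ref{theorem contractibility}; the deduction of Theorem~\ref{theorem contractible 1} referenced in the statement would then be the special case or the packaging of this construction over all components at once.
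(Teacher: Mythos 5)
Your proposal misses the single structural fact that drives the paper's proof: when $H$ is square-free, $\tH$ is not merely square-free and $2$-simply connected---it is a \emph{tree}. (A square-free graph has $\pi_1 \cong \pi_1^2$, so the universal $2$-cover has trivial $\pi_1$ and is thus a tree.) Everything in the actual argument---unique geodesics in $\tH$, a $\Gamma$-invariant ``radius'' function $u_\eta(\tx) = \max\{d_{\tH}(\tf(\tx),\ty) : \ty \in \eta(\tx)\}$, and a directed-graph structure on the outermost shell $u_\eta^{-1}(2n)$ whose acyclicity yields sinks---depends on this. Your proposal never isolates the tree structure, and without it the phrases ``convex set,'' ``interval-like,'' and ``tree-like'' have no content.

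There are also concrete gaps and errors in the route you sketch. First, the proposed ``neighborhood closure'' $\eta(v) \mapsto \bigcap_{w\sim v} N_{\tH}(\eta(w))$ pushes cells \emph{upward}, toward maximal elements; but the component of $\tf$ in $\Hom^\Gamma(\tG,\tH)$ is in general an infinite poset with no maximal element, so there is no canonical top to retract onto. The paper's operator does the opposite: it pushes \emph{downward} toward the minimum $\tf$ by shrinking the maximal radius, alternating an ascending operator (adding the penultimate vertex on the geodesic) with a descending one (deleting the outermost vertex), and the well-definedness of that geodesic is exactly what the tree structure provides. Second, your final reduction---that contractibility of the fixed-point poset should follow from the fact that ``every closed reconfiguration loop in $\Hom(G,H)$ based at $f$ lifts to a closed loop upstairs''---is both circular and wrong in direction: loop lifting is a statement about simple connectivity of the base, not contractibility of the cover, and the paper shows in Remark~\ref{remark not simply connected} that $\Hom^\Gamma(\tG,\tH)_{\tf}$ need not even be simply connected when $H$ is not square-free, so no argument that ignores the tree hypothesis can succeed. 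Finally, $\pi_1^2(\tG)$ is trivial (that is what ``universal'' means here), not equal to $\Gamma = \pi_1^2(G,v)$; $\Gamma$ acts on $\tG$ with quotient $G$.

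To repair the argument you would need (i) to explicitly use that $\tH$ is a tree, (ii) a filtration that is bounded above on each component---the paper stratifies by the maximum radius $u_\eta$, then by directed-path length in the shell graph $G(\eta)$, then by an enumeration of directed paths---and (iii) a pair of explicit closure operators within each stratum. A discrete Morse matching as you suggest is plausible in spirit (and some proofs of closure-operator retractions are phrased that way), but it would have to be built on the same geometry of geodesics in the tree, with the acyclicity of the matching coming from Lemma~\ref{lemma no cycle}; asserting it exists ``using the tree structure coming from $2$-simple-connectivity'' does not close the gap.
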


\subsection{Fundamental groups and realizable walks}

The homotopy type of a connected simplicial complex $X$ with a contractible universal covering is determined by its fundamental group $\pi_1(X)$ (see \cite[Theorem 1B.8]{Hatcher}). Therefore, when $H$ is square-free, Theorem~\ref{theorem contractibility} implies that the homotopy type of $\Hom(G, H)_f$ is determined by its fundamental group. Thus, our next task is to study the fundamental group of $\Hom(G, H)_f$.

For the moment, we do not assume that \( H \) is square-free. Then, the covering \( \Hom^\Gamma(\tG, \tH)_{\tf} \) is not necessarilly simply connected (see Remark~\ref{remark not simply connected}). Nonetheless, we can describe a remarkable connection in the general situation between this covering \( \Hom^\Gamma(\tG, \tH)_{\tf} \) and the group \( \Pi(f, v) \) of realizable elements in \( \pi_1^2(H, f(v)) \), which is a key concept in Wrochna's work \cite{WrochnaSIAM} on the homomorphism reconfiguration problem.

We briefly recall his work. While Wrochna originally considered only the case where \( H \) is square-free, it is straightforward to extend his concept to the general setting in terms of \( 2 \)-fundamental groups. Let $v$ be a non-isolated vertex of $G$. Consider a \(\times\)-homotopy \( h \colon G \times I_n \to H \) from \( f \) to \( f \) (see Subsection~\ref{subsection Hom complex}). Then there is a closed walk 
\[ (h(v, 0), w_1, h(v, 1), w_2, \ldots, w_n, h(v, n)) \] 
at $f(v)$ with even length in \( H \). Although this closed walk depends on the choice of $w_k$, its \( 2 \)-homotopy class does not. Thus, it defines an element of \( \pi_1^2(H, f(v)) \). We call an element of \( \pi_1^2(H, f(v)) \) obtained in this manner \emph{realizable}. The realizable elements form a subgroup of \( \pi_1^2(H, f(v)) \), which we denote by \( \Pi(f, v) \).

We are now ready to state our next result, which reveals a relation among $\pi_1(\Hom^\Gamma(\tG, \tH)_{\tf})$, $\pi_1(\Hom(G,H)_f)$ and $\Pi(f,v)$.

\begin{theorem} \label{theorem exact}
Let $G$ and $H$ be connected graphs. Then there is a following short exact sequence of groups:
\[ 1 \to \pi_1(\Hom^\Gamma(\tG, \tH)_{\tf}, \tf) \to \pi_1(\Hom(G,H)_f, f) \to \Pi(f,v) \to 1.\]
\end{theorem}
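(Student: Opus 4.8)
The plan is to realize all three groups as automorphism-type groups attached to the covering map $\pi \colon \Hom^\Gamma(\tG, \tH) \to \Hom(G,H)$ and then recognize the exact sequence as the standard one relating the fundamental group of a covering space, the fundamental group of the base, and the deck transformation group. First I would restrict $\pi$ to the relevant component: by Proposition~\ref{proposition Hom covering}, $\pi$ is a covering map, and $\pi|_{\Hom^\Gamma(\tG,\tH)_{\tf}} \colon \Hom^\Gamma(\tG,\tH)_{\tf} \to \Hom(G,H)_f$ is a covering of the component $\Hom(G,H)_f$ (one must check it is surjective onto this component and that this restricted covering is connected, which follows because $\tf$ lies over $f$ and the base component is path-connected). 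Feeding this into the homotopy long exact sequence of the covering, or equivalently the short exact sequence $1 \to \pi_1(\Hom^\Gamma(\tG,\tH)_{\tf}, \tf) \to \pi_1(\Hom(G,H)_f, f) \to G \to 1$ where $G$ is the group of deck transformations acting on the fiber through $\tf$, immediately gives the left two terms. So the crux is to identify this deck group with $\Pi(f,v)$.

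For that identification I would proceed as follows. The group $\Gamma = \pi_1^2(G)$ acts on $\tG$, hence on $\Hom^\Gamma(\tG,\tH)$, but since we take $\Gamma$-equivariant multihomomorphisms this action is trivial; the relevant symmetry instead comes from the action of $\pi_1^2(H, f(v))$ (or rather its deck-group incarnation acting on $\tH$ over $H$) by post-composition on $\Hom^\Gamma(\tG,\tH)$. A deck transformation of the covering $\pi$ that fixes the component $\Hom^\Gamma(\tG,\tH)_{\tf}$ must send $\tf$ to another lift of $f$ in the same component; the lifts of $f$ are parametrized by $\pi_1^2(H, f(v))$ once we fix a base vertex, so the deck group of the restricted covering is exactly the stabilizer-type subgroup of $\pi_1^2(H, f(v))$ consisting of those classes $[\gamma]$ for which the translated lift $\gamma \cdot \tf$ is joined to $\tf$ by a path in $\Hom^\Gamma(\tG,\tH)$. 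I would then show that a path in $\Hom^\Gamma(\tG,\tH)$ from $\tf$ to $\gamma\cdot\tf$ is the same data as a $\times$-homotopy $h \colon G \times I_n \to H$ from $f$ to $f$ whose associated closed walk at $f(v)$ represents $[\gamma]$ — this is precisely the definition of a realizable element. Unwinding the correspondence between edge-paths in a Hom complex and $\times$-homotopies (as recalled in Subsection~\ref{subsection Hom complex}) turns "$\gamma\cdot\tf$ and $\tf$ lie in the same component" into "$[\gamma] \in \Pi(f,v)$", and the group structures match because concatenation of $\times$-homotopies corresponds to multiplication in $\pi_1^2(H, f(v))$.

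The remaining point is the exactness of the quotient map $\pi_1(\Hom(G,H)_f, f) \to \Pi(f,v)$, i.e.\ that its image is all of $\Pi(f,v)$ and its kernel is exactly $\pi_1(\Hom^\Gamma(\tG,\tH)_{\tf}, \tf)$. Surjectivity is the statement that every realizable $[\gamma]$ is hit: given a $\times$-homotopy witnessing realizability, the corresponding path in $\Hom^\Gamma(\tG,\tH)$ from $\tf$ to $\gamma\cdot\tf$ projects to a loop at $f$ in $\Hom(G,H)_f$ whose image under the monodromy/deck correspondence is $[\gamma]$. Exactness at the middle term is then the usual covering-space fact that a loop in the base lifts to a loop in the total space iff it is nullhomotopic relative to the deck action, i.e.\ the kernel of the monodromy is the image of $\pi_1$ of the cover. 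I expect the main obstacle to be the careful bookkeeping in the middle step: making precise the dictionary between (i) paths in $\Hom^\Gamma(\tG,\tH)$, (ii) $\Gamma$-equivariant $\times$-homotopies $\tG \times I_n \to \tH$, and (iii) ordinary $\times$-homotopies $G \times I_n \to H$ together with their realizable walks, and verifying that this dictionary is compatible with concatenation and with the group operations on both ends. Everything else is a formal consequence of the covering-space machinery once $\pi|_{\Hom^\Gamma(\tG,\tH)_{\tf}}$ is known to be a connected covering of $\Hom(G,H)_f$.
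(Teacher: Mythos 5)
Your proposal matches the paper's proof in all essentials: the paper likewise restricts the poset covering map $\pi$ of Proposition~\ref{proposition Hom covering} to the component through $\tf$, invokes the standard covering-space short exact sequence (recorded as Theorem~\ref{theorem covering}), and identifies the third group with $\Pi(f,v)$ by showing that the fiber of $\pi$ over $f$ inside $\Hom^\Gamma(\tG,\tH)_{\tf}$ is exactly $\{\varpi\cdot\tf \mid \varpi\in\Pi(f,v)\}$, via Lemma~\ref{lemma covering} (every lift of $f$ in that component has the form $\varpi\cdot\tf$) and Theorem~\ref{theorem coincide} (such a $\varpi\cdot\tf$ lies in the component iff $\varpi$ is realizable). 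The only difference is that you derive these two facts inline rather than citing them; in particular your remark that ``the lifts of $f$ are parametrized by $\pi_1^2(H,f(v))$'' silently uses the rigidity argument of Lemma~\ref{lemma covering}, which rests on the $\pi_1^2(H,w)$-action on $\tH$ being a $2$-covering action, so be sure to make that step explicit if you write this up.
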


When $H$ is square-free, Theorem~\ref{theorem contractibility} implies that $\Hom(G,H)_{f}$ is the $K(\Pi(f,v), 1)$-space. Wrochna \cite{WrochnaSIAM} determined the isomorphism types of $\Pi(f,v)$ in this case (see Theorem~\ref{theorem Wrochna}), which completes the proof of Theorem~\ref{theorem main}.

The rest of this paper is organized as follows: In Section~\ref{section preliminaries}, we review necessary definitions and facts. In Section~\ref{section construction} we construct the covering $\Hom^\Gamma(\tG, \tH)$ and show that the natural map $\pi \colon \Hom^\Gamma(\tG, \tH) \to \Hom(G,H)$ is a covering map (Proposition~\ref{proposition Hom covering}). In Section~\ref{section contractibility}, we show Theorem~\ref{theorem contractibility}. In Section~\ref{section fundamental group}, we prove Theorem~\ref{theorem exact}, and complete the proof of Theorem~\ref{theorem main}. In Appendix, we provide an alternative proof of Wrochna's result (Theorem~\ref{theorem Wrochna}) required for the proof of Theorem~\ref{theorem main}.

\section{Preliminaries} \label{section preliminaries}
In this section, we review necessary definitions and facts, following \cite{Hatcher, HN, Kozlovbook, MatsushitaJMSUT}.

\subsection{Graphs} \label{subsection graphs}
A \emph{graph} is a pair $G = (V(G), E(G))$ consisting of a set $V(G)$ together with a symmetric subset $E(G)$ of $V(G) \times V(G)$, i.e., $(x,y) \in E(G)$ implies $(y,x) \in E(G)$. Hence our graphs may have loops but no multiple edges. A vertex $v$ is \emph{looped} if $(v,v) \in E(G)$. If $G$ has no looped vertices, then $G$ is said to be \emph{simple}. For $x,y \in V(G)$, we write $x \sim_G y$ or $x \sim y$ to mean that $x$ and $y$ are adjacent.

For $n \ge 3$, the $n$-cycle graph $C_n$ is defined by $V(C_n) = \ZZ / n$ and $E(C_n) = \{ (x,x\pm 1) \; | \; x \in \ZZ / n\}$. A \emph{square-free graph} is a simple graph which has no subgraph isomorphic to $C_4$.

For graphs $G$ and $H$, a \emph{graph homomorphism from $G$ to $H$} is a map $f \colon V(G) \to V(H)$ such that $(v,w) \in E(G)$ implies $(f(v), f(w)) \in E(H)$.

Let $P_n$ be the path graph with $n + 1$ vertices. Namely, $V(P_n) = \{ 0,1,\ldots, n\}$ and $E(P_n) = \{ (x,y) \mid |x - y| = 1\}$. A \emph{walk in a graph $G$ with length $n$} is a graph homomorphism $\gamma \colon P_n \to G$, and set $\mathrm{length}(\gamma) = n$. A walk $\gamma$ is called a \emph{path} when $\gamma$ is injective as a set map. We sometimes write $(\gamma(0), \cdots, \gamma(n))$ to indicate the walk $\gamma$. For a pair of walks $\gamma \colon P_n \to G$ and $\delta \colon P_m \to G$ such that $\gamma(n) = \delta(0)$, the \emph{concatenation $\gamma \cdot \delta$ of $\gamma$ and $\delta$} is defined by the walk $(\gamma(0), \cdots ,\gamma(n), \delta(1), \cdots, \delta(m))$ with length $m + n$.

\begin{remark} \label{remark order of concatenation}
The order of concatenation of walks aligns with that in \cite{Hatcher, TW, WrochnaJCTB1} but is the reverse of the order used in \cite{MatsushitaJMSUT}. While this does not entail any essential change, it reverses the roles of the starting and terminal points of walks. Furthermore, whereas the action of the $2$-fundamental group on the universal $2$-covering is from the right in \cite{MatsushitaJMSUT}, this action is from the left in this paper (see Subsection~\ref{subsection 2-covering action}).
\end{remark}

Let $\gamma$ be a walk $(x_0, \ldots, x_n)$, and suppose $x \sim x_0$ and $x_n \sim y$. We denote the walk $(x, x_0, \ldots, x_n)$ by $x$-$\gamma$, the walk $(x_0, \ldots, x_n, y)$ by $\gamma$-$y$, and the walk $(x_0, \ldots, x_{n-1})$ by $\gamma \setminus x_n$.

A \emph{based graph} is a pair $(G,v)$ consisting of a graph $G$ equipped with a vertex $v$ of $G$. For based graphs $(G,v)$ and $(H,w)$, a \emph{based graph homomorphism from $(G,v)$ to $(H,w)$} is a graph homomorphism from $G$ to $H$ sending $v$ to $w$. A \emph{closed walk of a based graph $(G,v)$} is a walk from $v$ to $v$.

The \emph{categorical product $G \times H$ of graphs $G$ and $H$} is defined by
\[ V(G \times H) = V(G) \times V(H), \quad E(G \times H) = \{ ((x,y),(x',y')) \; | \; (x,x') \in E(G), (y, y') \in E(H)\}.\]

\subsection{Poset topology} \label{subsection poset}

Let $P$ and $Q$ be (possibly infinite) posets. A \emph{chain in $P$} is a subset $\sigma$ of $P$ such that any two elements of $\sigma$ are comparable. The \emph{order complex $\Delta(P)$ of $P$} is the simplicial complex whose simplices are finite chains in $P$. The \emph{classifying space of $P$} is the geometric realization of $\Delta(P)$, and denoted by $|P|$. We apply topological terms to posets, using classifying spaces. For example, a subposet $Q$ of $P$ is said to be a deformation retract of $P$ if $|Q|$ is a deformation retract of $|P|$.

A map $f \colon P \to Q$ is said to be \emph{monotone} or \emph{order preserving} if for any two elements $x,y$ of $P$, $x \le y$ implies $f(x) \le f(y)$. Then a monotone map induces a continuous map $|f| \colon |P| \to |Q|$ between their classifying spaces.

An \emph{ascending (or descending) closure operator of $P$} is a monotone map $c \colon P \to P$ such that $c(x) \ge x$ ($c(x) \le x$, respectively) for every $x \in P$ and $c^2 = c$.
% A \emph{descending closure operator of $P$} is a monotone map $c \colon P \to P$ such that $c(x) \le x$ for every $x \in P$ and $c^2 = c$.

\begin{theorem}[{\cite[Theorem 13.12]{Kozlovbook}}] \label{theorem closure operator}
Let $c \colon P \to P$ be an ascending or descending closure operator. Then $c(P)$ is a deformation retract of $P$.
\end{theorem}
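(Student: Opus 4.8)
The plan is to treat the ascending case, where $c(x) \ge x$ for every $x \in P$; the descending case follows at once by applying the ascending case to the opposite poset $P^{\mathrm{op}}$, since $\Delta(P^{\mathrm{op}}) = \Delta(P)$, monotonicity and idempotence are self-dual, and a descending closure operator on $P$ is exactly an ascending one on $P^{\mathrm{op}}$. So assume $c$ is ascending, put $Q = c(P)$, let $i \colon Q \hookrightarrow P$ be the inclusion and $r \colon P \to Q$ the corestriction of $c$. Both maps are monotone, $i \circ r = c$ as self-maps of $P$, and $r \circ i = \id_Q$ because every $y \in Q$ is of the form $c(z)$, so $c(y) = c^2(z) = c(z) = y$ by idempotence. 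Hence it suffices to build a deformation of $|P|$ onto $|Q|$: a homotopy from $\id_{|P|}$ to $|c|$ which is stationary on $|Q|$ at every time.

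The first step is to construct this homotopy at the poset level. Let $[1] = \{0 < 1\}$ be the two-element chain and define $\Phi \colon P \times [1] \to P$ by $\Phi(x,0) = x$ and $\Phi(x,1) = c(x)$. Monotonicity is clear except for a pair $(x,0) \le (y,1)$, i.e. $x \le y$, where $\Phi(x,0) = x \le y \le c(y) = \Phi(y,1)$, the last inequality being the ascending property (for a pair $(x,1) \le (y,1)$ one uses monotonicity of $c$). By construction $\Phi$ restricts on $P \times \{0\}$ and $P \times \{1\}$ to $\id_P$ and $c$; moreover it is \emph{constant in the $[1]$-direction along $Q$}, since for $y \in Q$ we have $\Phi(y,0) = y = c(y) = \Phi(y,1)$, so the restriction of $\Phi$ to the subposet $Q \times [1]$ factors as $Q \times [1] \to Q \xrightarrow{i} P$ with the first arrow the projection.

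The second step is to pass to classifying spaces through the standard homeomorphism $|\Delta(P \times [1])| \cong |P| \times [0,1]$, which is natural in $P$ and identifies the two end inclusions $P \times \{0\} \hookrightarrow P \times [1]$ and $P \times \{1\} \hookrightarrow P \times [1]$ with $|P| \times \{0\}$ and $|P| \times \{1\}$. Under it, $|\Phi|$ becomes a homotopy $H \colon |P| \times [0,1] \to |P|$ with $H_0 = \id_{|P|}$ and $H_1 = |c|$. Applying the same identification to $Q$ and using naturality, the ``constant along $Q$'' property of $\Phi$ shows that $H$ restricted to $|Q| \times [0,1]$ is the projection to $|Q|$ followed by $|i|$; in particular $H_t \circ |i| = |i|$ for all $t$. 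Since $|c|$ maps $|P|$ into $|Q|$ and restricts to $\id_{|Q|}$, this exhibits $|Q| = |c(P)|$ as a (strong) deformation retract of $|P|$, which is what we wanted.

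The only point requiring care is the homeomorphism $|\Delta(P \times [1])| \cong |\Delta(P)| \times [0,1]$ and its naturality in $P$ — the ``prism'' over $|\Delta(P)|$ — which is exactly what transfers the ``constant on $Q \times [1]$'' feature of $\Phi$ to the ``stationary on $|Q|$'' feature of $H$; this is a standard fact about order complexes available in the earlier part of Kozlov's book, and compactness of $[1]$ prevents point-set issues even for infinite $P$. Conceptually the proof is just the relative form of the order-homotopy principle: pointwise comparable monotone maps that agree on a subposet induce homotopic realizations rel that subposet.
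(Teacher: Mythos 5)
The paper does not prove this statement; it cites it directly as \cite[Theorem 13.12]{Kozlovbook}, so there is no internal argument to compare against. Your proof is correct and is essentially the standard one. The reduction to the ascending case via $P^{\mathrm{op}}$ is sound because chains in $P$ and $P^{\mathrm{op}}$ coincide. The map $\Phi\colon P\times[1]\to P$ is indeed monotone: the only nontrivial comparison $(x,0)\le(y,1)$ uses exactly the ascending property $x\le y\le c(y)$, and you have checked the others. The observation that $\Phi$ is constant in the $[1]$-direction on $Q=c(P)$ (by idempotence) is what upgrades the conclusion from ``$|c|$ is homotopic to the identity'' to ``$|Q|$ is a strong deformation retract.'' The one step that deserves the care you gave it is the identification $|\Delta(P\times[1])|\cong|\Delta(P)|\times[0,1]$ for arbitrary (possibly infinite) $P$; this is a genuine classical fact (the prism triangulation / Eilenberg--Zilber for order complexes), and finiteness of $[1]$ ensures the product topology matches the weak topology, as you note. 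Kozlov's Theorem 13.12 is stated for finite posets; your argument, which amounts to a relative version of the order-homotopy principle (two pointwise-comparable monotone maps agreeing on a subposet are homotopic rel that subposet), is a clean way to cover the infinite case that the paper actually needs, since $\tG$ is typically infinite. One very minor stylistic point: you do not strictly need both the ``projection followed by $|i|$'' description and the conclusion $H_t\circ|i|=|i|$; either formulation alone suffices for the rel-$|Q|$ claim.
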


\begin{definition} \label{definition poset covering map}
A monotone map $f \colon P \to Q$ is a \emph{poset covering map} if for every pair $y$ and $y'$ of elements of $Q$ with $y \le y'$, both of the following two conditions are satisfied:
\begin{enumerate}[(1)]
\item For every $x \in P$ with $f(x) = y$, there is unique $x' \in P$ such that $x \le x'$ and $f(x') = y'$.

\item For every $x' \in P$ with $f(x') = y'$, there is unique $x \in P$ such that $x \le x'$ and $f(x) = y$.
\end{enumerate}
\end{definition}

\begin{proposition}[{see \cite[Proposition 2.9 and Theorem 3.2]{BM} for example}] \label{proposition poset covering maps}
A poset covering map $f \colon P \to Q$ induces a covering map $|f| \colon |P| \to |Q|$ between their classifying spaces.
\end{proposition}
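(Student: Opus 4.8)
The plan is to show directly that $|f|$ is locally trivial over the open stars $U_y = \mathrm{st}(y) \subseteq |Q|$ of the vertices of $Q$, which form an open cover of $|Q|$. This reduces to two combinatorial facts about $f$: that $f$ maps each chain of $P$ bijectively onto a chain of $Q$ (so that $|f|$ is a nondegenerate simplicial map), and that for every $x \in P$ the map $f$ carries the closed star of $x$ in $\Delta(P)$ isomorphically onto the closed star of $f(x)$ in $\Delta(Q)$. Granting these, the preimage of $U_y$ splits as a disjoint union of the open stars of the vertices in $f^{-1}(y)$, each mapped homeomorphically onto $U_y$, and $|f|$ is a covering map onto the union of the components of $|Q|$ that it meets — in particular an honest covering map $|P| \to |Q|$ whenever $f$ is surjective, e.g.\ when $Q$ is connected and $P \neq \emptyset$, the set of vertices with nonempty fibre being closed under passing to comparable vertices by conditions (1) and (2). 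The argument needs no finiteness hypothesis.

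The first fact is quick: applying condition (1) of Definition~\ref{definition poset covering map} with $y' = y$ shows that $x \le x'$ together with $f(x) = f(x')$ forces $x' = x$, so $f$ is injective on chains and therefore sends each chain bijectively onto a chain. For the stars, write $C_x = P_{\le x} \cup P_{\ge x}$ for the subposet of elements comparable to $x$; a short check shows that a chain $\sigma$ of $P$ satisfies ``$\sigma \cup \{x\}$ is a chain'' exactly when $\sigma \subseteq C_x$, so inside $\Delta(P)$ the closed star of $x$ equals $\Delta(C_x)$, the link of $x$ equals $\Delta(C_x \setminus \{x\})$, and the open star is $U_x = |C_x| \setminus |C_x \setminus \{x\}|$; the same holds for $y = f(x)$, and $f(C_x) \subseteq C_y$ by monotonicity.

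The heart of the proof is that $f$ restricts to a poset isomorphism $C_x \to C_y$. Surjectivity is immediate: an element $z \le y$ lifts to an element of $P_{\le x}$ by condition (2), and an element $z \ge y$ lifts to an element of $P_{\ge x}$ by condition (1). For injectivity, and for order-reflection (if $a,b \in C_x$ and $f(a) \le f(b)$ then $a \le b$), one argues by cases on the positions of the two elements relative to $x$. If both lie $\le x$, the uniqueness clause of condition (2) — comparing the element in question with the lift produced by the covering property over $x$ — forces the required equality or inequality; if both lie $\ge x$, the symmetric argument with condition (1) applies; the case $a \le x \le b$ is trivial; and the remaining mixed case forces $f(a) = f(b) = y$ (since $f(a) \le y \le f(b)$ and $f(a) = f(b)$), whence $a = x = b$ by injectivity on chains. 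Hence $\Delta(C_x) \to \Delta(C_y)$ is a simplicial isomorphism matching the link of $x$ with the link of $y$, so $|f|$ restricts to a homeomorphism $\overline{\mathrm{st}}(x) \to \overline{\mathrm{st}}(y)$ and therefore to a homeomorphism $U_x \to U_y$.

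To assemble the covering structure: since $|f|$ is nondegenerate, the support of $|f|(p)$ is the image of the support of $p$, so $|f|^{-1}(U_y) = \bigcup_{x \in f^{-1}(y)} U_x$, and this union is disjoint because two distinct elements of a fibre of $f$ cannot lie in a common chain (they would be comparable, contradicting injectivity on chains). Combined with the homeomorphisms $U_x \to U_y$ of the previous step, this exhibits $|f|$ as a covering map over each $U_y$, hence globally. I expect the case analysis in the third paragraph to be the main obstacle: Definition~\ref{definition poset covering map} only constrains comparable pairs in $Q$, while $C_x$ is in general not a chain, so one must carefully deploy the uniqueness parts of conditions (1) and (2) against the various relative positions of elements; the rest is routine simplicial topology.
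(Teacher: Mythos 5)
The paper does not prove this proposition itself; it simply cites \cite{BM} and moves on, so there is no in-house argument to compare against. Your proof is correct and is the standard one: you identify the closed star of $x$ in $\Delta(P)$ with $\Delta(C_x)$, where $C_x$ is the set of elements comparable to $x$, use the two uniqueness clauses of Definition~\ref{definition poset covering map} to show $f$ restricts to a poset isomorphism $C_x \to C_{f(x)}$ (and hence a simplicial isomorphism of closed stars matching links), and then use nondegeneracy together with injectivity of $f$ on comparable pairs to obtain the disjoint decomposition $|f|^{-1}(U_y) = \bigsqcup_{x \in f^{-1}(y)} U_x$. The case analysis in the isomorphism step is right; the only blemish is a typo in the last mixed case, where the parenthetical should read $y \le f(a) \le f(b) \le y$ rather than $f(a) \le y \le f(b)$, but the conclusion $f(a)=f(b)=y$ and hence $a=x=b$ is unaffected. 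Your caveat about surjectivity is also well taken: conditions (1) and (2) only force the image of $f$ to be a union of connected components of $Q$, so one needs $f$ surjective (or $P$ nonempty and $Q$ connected) to get a covering of all of $|Q|$; this holds in the paper's application since $\pi \colon \Hom^\Gamma(\tG,\tH) \to \Hom(G,H)$ is observed to be surjective.
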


\subsection{Hom complexes} \label{subsection Hom complex}

For a set $X$, let $\PPP(X)$ denote the power set of $X$. For graphs $G$ and $H$, a \emph{multi-homomorphism from $G$ to $H$} is a map $\eta \colon V(G) \to \PPP(V(H)) \setminus \{ \emptyset \}$ such that $(x,y) \in E(G)$ implies $\eta(x) \times \eta(y) \subset E(H)$, i.e., every element of $\eta(x)$ and every element of $\eta(y)$ are adjacent. Let $\Hom(G,H)$ be the set of multi-homomorphisms from $G$ to $H$. We define a partial order on $\Hom(G,H)$ as follows: For $\eta, \eta' \in \Hom(G,H)$, $\eta \le \eta'$ if and only if $\eta(x) \subset \eta'(x)$ for every $x \in V(G)$. A map $f \colon V(G) \to V(H)$ is a graph homomorphism if and only if the map $x \mapsto \{ f(x)\}$ is a multi-homomorphism. From this point of view, we regard a graph homomorphism as an element of $\Hom(G,H)$.

A graph homomorphism $p \colon H_1 \to H_2$ induces a monotone map $p_* \colon \Hom(G,H_1) \to \Hom(G,H_2)$ defined by $p_*(\eta)(x) = p(\eta(x))$.
%Note that a multi-homomorphism is a graph homomorphism if and only if it is minimal.

\begin{remark} \label{remark isolated}
If $G$ has no edge, then any map from $V(G)$ to $\PPP(V(H))$ is a multi-homomorphism. Thus, $\Hom(G,H)$ is empty or contractible, and Theorem~\ref{theorem main} is trivial. This is an exceptional case, and we assume that $G$ and $H$ have at least one edge in the subsequent sections (see Subsection~\ref{subsection notation}).
\end{remark}

Now we recall a bit of $\times$-homotopy theory, studied by Dochtermann \cite{DochtermannEJC}. For a non-negative integer $n$, let $I_n$ be the graph defined by $V(I_n) = \{ 0,1, \cdots, n\}$, $E(I_n) = \{ (i,j) \; | \; i,j \in V(I_n), |i-j| \le 1 \}$.
Let $f,g \colon G \to H$ be graph homomorphisms. A \emph{$\times$-homotopy from $f$ to $g$} is a graph homomorphism $h \colon G \times I_n \to H$ such that $h(x,0) = f(x)$ and $h(x,n) = g(x)$ for every $x \in V(G)$. Two graph homomorphisms $f$ and $g$ are said to be \emph{$\times$-homotopic} if there is a $\times$-homotopy from $f$ to $g$. The proof of the following lemma is straightforward.

\begin{lemma} \label{lemma x-homotopy}
For graph homomorphisms $f,g \colon G \to H$, the following are equivalent:
\begin{enumerate}[(1)]
\item The map $h \colon V(G \times I_1) \to V(H)$ defined by $h(x,0) = f(x)$ and $h(x,1) = g(x)$ is a graph homomorphism from $G \times I_1$ to $H$.

\item The map $f \cup g \colon V(G) \to \PPP(V(H)) \setminus \{ \emptyset\}$, $x \mapsto \{ f(x), g(x)\}$ is a multi-homomorphism.

\item There is a multi-homomorphism $\eta \in \Hom(G,H)$ such that $f \le \eta$ and $g \le \eta$.
\end{enumerate}
\end{lemma}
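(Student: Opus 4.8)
The plan is to prove the equivalences by unwinding the definitions of $\times$-homotopy, the categorical product, and multi-homomorphisms; it is most economical to establish $(1)\Leftrightarrow(2)$ and $(2)\Leftrightarrow(3)$ directly. The one structural fact to pin down first is the edge set of $I_1$: since $E(I_1)=\{(x,y)\mid |x-y|\le 1\}$ contains the loops $(0,0)$ and $(1,1)$ as well as $(0,1)$ and $(1,0)$, \emph{every} pair of vertices of $I_1$ is an edge. Hence, by the definition of $G\times I_1$, a pair $((x,i),(x',i'))$ is an edge of $G\times I_1$ precisely when $(x,x')\in E(G)$, with $i,i'\in\{0,1\}$ entirely unconstrained.

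For $(1)\Leftrightarrow(2)$: writing $h(x,0)=f(x)$ and $h(x,1)=g(x)$, the map $h$ is a graph homomorphism $G\times I_1\to H$ iff for every $(x,x')\in E(G)$ and every choice of $i,i'\in\{0,1\}$ one has $(h(x,i),h(x',i'))\in E(H)$; by the previous paragraph this is exactly the condition that $\{f(x),g(x)\}\times\{f(x'),g(x')\}\subseteq E(H)$ whenever $(x,x')\in E(G)$. Since each set $\{f(x),g(x)\}$ is automatically nonempty, this is precisely the statement that $f\cup g$ is a multi-homomorphism. For $(2)\Rightarrow(3)$, take $\eta=f\cup g$; then $\{f(v)\}\subseteq\eta(v)$ and $\{g(v)\}\subseteq\eta(v)$ for all $v$, so $f\le\eta$ and $g\le\eta$ in $\Hom(G,H)$. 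Conversely, for $(3)\Rightarrow(2)$, given a multi-homomorphism $\eta$ with $f\le\eta$ and $g\le\eta$ we have $\{f(v),g(v)\}\subseteq\eta(v)$ for every $v$, so for any $(v,w)\in E(G)$,
\[ \{f(v),g(v)\}\times\{f(w),g(w)\}\subseteq\eta(v)\times\eta(w)\subseteq E(H), \]
and since $\{f(v),g(v)\}\ne\emptyset$ this shows $f\cup g\in\Hom(G,H)$.

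I do not expect any genuine obstacle here; the lemma is a direct definition-chase, which is why the paper calls the proof straightforward. The only two points that require a moment's care are (i) that the loops at $0$ and $1$ in $I_1$ are what make the ``time'' coordinate unconstrained in $G\times I_1$ (this is the reason condition (1) reduces to a statement quantified over all $i,i'$ rather than only over $i\ne i'$), and (ii) that the relation $f\le\eta$ in $\Hom(G,H)$ unwinds, via the identification of a graph homomorphism with the multi-homomorphism $v\mapsto\{f(v)\}$, to the pointwise inclusions $\{f(v)\}\subseteq\eta(v)$.
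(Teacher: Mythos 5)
Your proof is correct and is exactly the definition-chase the paper has in mind; the paper itself omits the proof, noting only that it is straightforward. The two points you flag for care --- that the loops at $0$ and $1$ in $I_1$ leave the time coordinate unconstrained in $G\times I_1$, and that $f\le\eta$ unwinds to $\{f(v)\}\subseteq\eta(v)$ --- are indeed the only places where a reader could stumble, and you handle both cleanly.
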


This implies the following:

\begin{proposition}[{\cite[Proposition 4.7]{DochtermannEJC}}] \label{proposition homotopy}
Let $f$ and $g$ be graph homomorphisms from $G$ to $H$. Then $f$ and $g$ are $\times$-homotopic if and only if $f$ and $g$ belong to the same connected component of $\Hom(G,H)$.
\end{proposition}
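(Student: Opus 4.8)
The plan is to derive both directions from Lemma~\ref{lemma x-homotopy} together with two elementary observations. First, $\times$-homotopy is an equivalence relation on the set of graph homomorphisms $G \to H$: reflexivity is witnessed by the length-$0$ homotopy, symmetry by reversing the $I_n$-coordinate, and transitivity by glueing a $\times$-homotopy $G \times I_n \to H$ and a $\times$-homotopy $G \times I_m \to H$ along the graph $G \times I_{n+m}$ --- the glued map is again a graph homomorphism because $I_{n+m}$ has no edge joining two vertices more than $1$ apart and the two homotopies agree on the slice $G \times \{n\}$. Second, every $\eta \in \Hom(G,H)$ admits a \emph{selection}, i.e.\ a graph homomorphism $\varphi \colon G \to H$ with $\varphi(v) \in \eta(v)$ for all $v$; indeed $(v,w) \in E(G)$ forces $\eta(v) \times \eta(w) \subset E(H)$, hence $(\varphi(v), \varphi(w)) \in E(H)$. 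Moreover, if $\varphi$ and $\psi$ are both selections of the same $\eta$, then $\{\varphi(v), \psi(v)\} \subset \eta(v)$ for all $v$, so $\varphi \cup \psi \le \eta$ is a multi-homomorphism, and Lemma~\ref{lemma x-homotopy} gives that $\varphi$ and $\psi$ are $\times$-homotopic.

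For the implication ``$\times$-homotopic $\Rightarrow$ same component'', I would take a $\times$-homotopy $h \colon G \times I_n \to H$ from $f$ to $g$ and set $h_k := h(-,k) \colon G \to H$, a graph homomorphism for each $k$, with $h_0 = f$ and $h_n = g$. Since all of $(k,k)$, $(k,k+1)$, $(k+1,k)$, $(k+1,k+1)$ are edges of $I_n$, the map $h_k \cup h_{k+1}$ is a multi-homomorphism, so $\eta_k := h_k \cup h_{k+1}$ satisfies $h_k \le \eta_k$ and $h_{k+1} \le \eta_k$ in the poset $\Hom(G,H)$. Thus $h_k$ and $h_{k+1}$ span an edge (a chain) in the order complex $\Delta(\Hom(G,H))$, and concatenating these edges for $k = 0, \dots, n-1$ produces an edge-path from $f$ to $g$; hence $f$ and $g$ lie in the same connected component of $|\Hom(G,H)|$.

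For the converse, recall that for a simplicial complex the connected components agree with the path components, and two vertices of a single component are joined by an edge-path; applied to $\Delta(\Hom(G,H))$ this gives a sequence $f = \eta_0, \eta_1, \dots, \eta_m = g$ of multi-homomorphisms with $\eta_i$ and $\eta_{i+1}$ comparable for each $i$. Choose selections $\varphi_i$ of $\eta_i$ with $\varphi_0 = f$ and $\varphi_m = g$. If $\eta_i \le \eta_{i+1}$ then $\varphi_i$ is also a selection of $\eta_{i+1}$, and if $\eta_i \ge \eta_{i+1}$ then $\varphi_{i+1}$ is a selection of $\eta_i$; either way $\varphi_i$ and $\varphi_{i+1}$ are two selections of a common multi-homomorphism, hence $\times$-homotopic by the second observation. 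Transitivity then yields that $f = \varphi_0$ and $g = \varphi_m$ are $\times$-homotopic.

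I do not expect a serious obstacle here: the argument is essentially unwinding definitions plus Lemma~\ref{lemma x-homotopy}. The only points demanding a little care are the translation between topological connectedness of the classifying space $|\Hom(G,H)|$ and combinatorial connectedness of the $1$-skeleton of its order complex (standard for simplicial complexes), and checking that the concatenation of two $\times$-homotopies is again a graph homomorphism across the glued coordinate, which hinges on the edge structure of $I_n$ and the agreement of the two homotopies on the overlapping slice.
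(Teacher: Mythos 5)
Your argument takes exactly the route the paper has in mind: the paper derives the proposition from Lemma~\ref{lemma x-homotopy} (citing Dochtermann for details), and you fill in precisely those details via Lemma~\ref{lemma x-homotopy} together with the observation that selections of a common multi-homomorphism are $\times$-homotopic. Both directions are correct in substance.

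One small slip: in the forward direction you write that ``$h_k$ and $h_{k+1}$ span an edge (a chain) in the order complex.'' They do not --- two distinct graph homomorphisms are never comparable in $\Hom(G,H)$, since each assigns a singleton at every vertex, so $\{h_k, h_{k+1}\}$ is not a chain. What you actually have, via $\eta_k := h_k \cup h_{k+1}$, is the length-two edge-path $h_k \le \eta_k \ge h_{k+1}$, i.e.\ the two edges $\{h_k,\eta_k\}$ and $\{\eta_k,h_{k+1}\}$ of $\Delta(\Hom(G,H))$. Concatenating these two-step paths for $k = 0,\dots,n-1$ still gives an edge-path from $f$ to $g$, so the conclusion is unaffected; just the intermediate claim needs rewording.

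Everything else checks out: the transitivity of $\times$-homotopy by gluing along the shared slice $G \times \{n\}$ is fine because adjacent coordinates in $I_{n+m}$ differ by at most one, so no edge of $G \times I_{n+m}$ straddles both pieces without touching the slice; and the converse correctly reduces to comparing adjacent vertices of an edge-path in the $1$-skeleton of $\Delta(\Hom(G,H))$ and invoking Lemma~\ref{lemma x-homotopy}(3)$\Rightarrow$(1) via selections.
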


\subsection{$2$-fundamental groups} \label{subsection 2-fundamental group}

The theory of $2$-fundamental groups and $2$-covering maps was first studied in \cite{MatsushitaJMSUT}, and appllied to multiplicative graphs in \cite{TW, WrochnaJCTB1}. Another type of combinatorial fundamental groups related to Hom complexes was studied in \cite{DochtermannJCTA}.

Let $v,v' \in V(G)$, and let $\Omega (G,v,v')$ be the set of walks joining $v$ to $v'$. Let $\simeq_2$ be the equivalence relation on $\Omega(G,v,v')$ generated by the following two relations (A) and (B): For walks $(x_0, \cdots, x_m)$ and $(y_0, \cdots, y_n)$ joining $v$ to $v'$,
\begin{enumerate}[(A)]
\item $n = m + 2$ and there is $k \in \{ 0,1,\ldots, m\}$ such that $x_i = y_i$ for every $i \le k$ and $x_i = y_{i+2}$ for every $i \ge k$.

\item $m = n$ and $\# \{ i \mid x_i \ne y_i\} \le 1$.
\end{enumerate}
Namely, for $\gamma, \gamma' \in \Omega(G, v, v')$, $\gamma \simeq_2 \gamma'$ holds if and only if there is a sequence $\gamma = \gamma_0, \cdots, \gamma_k = \gamma'$ of $\Omega(G,v, v')$ such that for each $i = 1, \cdots, k$ one of $(\gamma_{i-1}, \gamma_i)$ and $(\gamma_i, \gamma_{i-1})$ satisfies one of (A) and (B).

Let $\pi_1^2(G,v,v')$ be the quotient set $\Omega(G,v,v') / {\simeq_2}$. We call the equivalence class containing $\varphi \in \Omega(G,v,v')$ the \emph{$2$-homotopy class of $\varphi$}. We write $\pi_1^2(G,v)$ instead of $\pi_1^2(G,v,v)$. The concatenation of walks defines a group operation on $\pi_1^2(G,v)$. We call this group the \emph{$2$-fundamental group of $(G,v)$}. Note that a based graph homomorphism $f \colon (G,v) \to (H,w)$ induces a group homomorphism $f_* \colon \pi_1^2(G,v) \to \pi_1^2(H,f(v))$.

We write $\Omega(G,v)$ instead of $\Omega(G,v,v)$. Let $\simeq_1$ be the equivalence relation on $\Omega(G,v)$ generated by the condition (A) and set $\pi_1^1(G,v) = \Omega (G,v)/ \simeq_1$. Similarly, by the concatenation of closed walks, $\pi_1^1(G,v)$ becomes a group. We call this group the $1$-fundamental group of $(G,v)$. By the definition there is a natural quotient map $\pi_1^1(G,v) \to \pi_1^2(G,v)$. When $G$ is simple, then the $1$-fundamental group is isomorphic to the edge path group (see \cite{Spanier}) of $(G,v)$, and hence is isomorphic to the usual fundamental group. Thus, in this case we write $\pi_1(G,v)$ instead of $\pi_1^1(G,v)$.

If $G$ is square-free, then  (B) is obtained from (A). Thus we have the following:

\begin{lemma} \label{lemma square-free isomorphism}
If $G$ is square-free and $v \in V(G)$, then the natural quotient map $\pi_1(G,v) \to \pi_1^2(G,v)$ is an isomorphism.
\end{lemma}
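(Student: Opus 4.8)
The plan is to prove the slightly stronger assertion that, when $G$ is square-free, the two equivalence relations $\simeq_1$ and $\simeq_2$ on $\Omega(G,v)$ coincide. Granting this, the lemma is immediate: both $\pi_1(G,v) = \Omega(G,v)/{\simeq_1}$ and $\pi_1^2(G,v) = \Omega(G,v)/{\simeq_2}$ are groups under concatenation, and the natural projection is induced by the inclusion ${\simeq_1} \subseteq {\simeq_2}$, which holds because $\simeq_1$ is generated by relation (1), one of the two generators of $\simeq_2$. In particular this projection is automatically a surjective group homomorphism, so it remains only to prove injectivity, i.e. that ${\simeq_2} \subseteq {\simeq_1}$.

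Since $\simeq_2$ is the equivalence relation generated by (1) and (2), and $\simeq_1$ already contains (1), it suffices to show that any two closed walks $\gamma = (x_0, \dots, x_n)$ and $\delta = (y_0, \dots, y_n)$ related by a single instance of relation (2) satisfy $\gamma \simeq_1 \delta$. We may assume $\gamma \neq \delta$, so there is a unique index $i$ with $x_i \neq y_i$ and $x_j = y_j$ for all $j \neq i$. Because $x_0 = y_0 = v = x_n = y_n$, we must have $0 < i < n$; hence $x_{i-1}, x_{i+1}$ are defined and $x_{i-1} \sim x_i \sim x_{i+1}$ as well as $x_{i-1} \sim y_i \sim x_{i+1}$.

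I would then split into two cases according to whether $x_{i-1} = x_{i+1}$. If $x_{i-1} = x_{i+1}$, then both $\gamma$ and $\delta$ are obtained from the common closed walk $\gamma' = (x_0, \dots, x_{i-1}, x_{i+2}, \dots, x_n)$ by inserting a backtrack spike after $x_{i-1}$ (namely the vertex $x_i$, resp. $y_i$), which is precisely an instance of relation (1); hence $\gamma \simeq_1 \gamma' \simeq_1 \delta$. If instead $x_{i-1} \neq x_{i+1}$, then $x_{i-1}, x_i, x_{i+1}, y_i$ are pairwise distinct: adjacency together with simplicity of $G$ excludes every coincidence except $x_{i-1} = x_{i+1}$ and $x_i = y_i$, both of which are ruled out by hypothesis. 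Consequently these four vertices, together with the edges $x_{i-1}x_i$, $x_i x_{i+1}$, $x_{i+1}y_i$, $y_i x_{i-1}$, form a copy of $C_4$ in $G$, contradicting square-freeness. Thus the second case cannot occur, so $\gamma \simeq_1 \delta$ always, and ${\simeq_1} = {\simeq_2}$.

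I do not anticipate a real obstacle here; the argument is a short case analysis and the only points demanding care are the boundary bookkeeping (verifying that the differing index $i$ is interior, so that the spike removal producing $\gamma'$ is legitimate even when $i-1 = 0$ or $i+1 = n$) and the distinctness check for the four vertices in the second case — which is exactly the place where square-freeness is used.
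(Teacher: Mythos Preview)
Your proof is correct and is precisely a detailed unpacking of the paper's one-line justification that ``if $G$ is square-free, then (2) is obtained by (1)'': the paper asserts without elaboration that relation (2) follows from relation (1) under square-freeness, and your case analysis on whether $x_{i-1}=x_{i+1}$ is exactly the verification of that assertion. There is no meaningful difference in approach.
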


By the definition of the $2$-fundamental group $\pi_1^2(G,v)$, there is a well-defined group homomorphism
\[ \pi_1^2(G,v) \to \ZZ / 2, \quad [\gamma] \mapsto \mathrm{length}(\gamma)\mod 2.\]
The kernel of this group homomorphism is called the \emph{even part of $\pi_1^2(G,v)$} and is denoted by $\pi_1^2(G,v)_{ev}$. If the component of $G$ containing $v$ is bipartite, then $\pi_1^2(G,v)_{ev}$ coincides with $\pi_1^2(G,v)$. If the component of $G$ containing $v$ is non-bipartite, then $\pi_1^2(G,v)_{ev}$ is a subgroup of $\pi_1^2(G,v)$ whose index is $2$.

Finally, we recall the following close relationship between the $2$-fundamental groups and the fundamental groups of neighborhood complexes.

\begin{theorem}[Theorem 1.1 of \cite{MatsushitaJMSUT}] \label{theorem neighborhood complex}
Let $(G,v)$ be a based graph, and assume that $v$ is not isolated. Then there is a natural isomorphism
\[ \pi_1^2(G,v)_{ev} \xrightarrow{\cong} \pi_1(\NNN(G),v).\]
\end{theorem}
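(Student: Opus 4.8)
The plan is to exhibit a concrete isomorphism by identifying both groups with a single combinatorially defined object, namely the edge-path group of the neighborhood complex $\NNN(G)$ based at a vertex $v$ that belongs to at least one edge. Recall that $\NNN(G)$ is the simplicial complex on $V(G)$ whose simplices are the sets $\sigma \subset V(G)$ having a common neighbor, so a vertex $w$ is a vertex of $\NNN(G)$ precisely when $w$ is non-isolated; this is why we must assume $v$ is not isolated. The key observation is that a \emph{walk of even length} $(x_0, x_1, \ldots, x_{2k})$ in $G$ gives rise to an edge path $(x_0, x_2, \ldots, x_{2k})$ in $\NNN(G)$: for each $i$ the pair $\{x_{2i}, x_{2i+2}\}$ has the common neighbor $x_{2i+1}$, hence spans an edge (or a vertex) of $\NNN(G)$. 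Conversely, every edge path in $\NNN(G)$ arises this way, by choosing for each consecutive pair $\{u, u'\}$ a common neighbor $m$ and inserting it, producing $(u, m, u', \ldots)$; this is well-defined on $1$-homotopy classes of edge paths because changing the chosen common neighbor $m$ to $m'$ alters the even walk only in the single odd-indexed coordinate, which is exactly a move of type~(2) in the definition of $\simeq_2$.

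First I would make this correspondence precise as a bijection between $\Omega(G, v)_{ev}$ (even closed walks at $v$) and the set of edge loops in $\NNN(G)$ at $v$, and check it is compatible with concatenation. Second, I would verify that the two equivalence relations match up under this bijection: on the walk side we have $\simeq_2$ restricted to even walks, which is generated by backtrack cancellation~(1) (removing a segment $x_i, x_{i+1}, x_i$, or more generally the relation in (1) with $m = n - 2$) and single-coordinate changes~(2); on the simplicial side the edge-path group is the quotient of edge loops by elementary contractions within simplices of $\NNN(G)$. A type-(1) move on an even walk removes two consecutive entries $x_{2i+1}, x_{2i+2}$ with $x_{2i} = x_{2i+2}$ — wait, more carefully, a type-(1) move that deletes entries $x_{k}, x_{k+1}$ with $k$ even corresponds to deleting a repeated vertex $x_k = x_{k+2}$ in the edge path, i.e.\ a backtrack; a type-(1) move deleting entries with $k$ odd, or a type-(2) move at an odd coordinate, corresponds to rechoosing a common neighbor and does not change the edge path at all; and a type-(2) move at an even coordinate, say replacing $x_{2i}$ by $x_{2i}'$, is legitimate exactly when $x_{2i-1}$ and $x_{2i+1}$ are both common neighbors of $\{x_{2i-2}, x_{2i}, x_{2i}'\}$ and of $\{x_{2i}, x_{2i}', x_{2i+2}\}$, which says $\{x_{2i-2}, x_{2i}, x_{2i}'\}$ and $\{x_{2i}, x_{2i}', x_{2i+2}\}$ are simplices of $\NNN(G)$, so the edge-path move $x_{2i} \rightsquigarrow x_{2i}'$ takes place inside these simplices and is an elementary edge-path homotopy. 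Conversely one checks every elementary edge-path homotopy in $\NNN(G)$ lifts to a composite of moves~(1) and (2) on even walks. This gives a group isomorphism $\pi_1^2(G, v)_{ev} \cong E(\NNN(G), v)$ with the edge-path group, and by the standard fact (Spanier) that the edge-path group of a connected simplicial complex is isomorphic to its topological fundamental group, we get $\pi_1^2(G,v)_{ev} \cong \pi_1(\NNN(G), v)$.

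The main obstacle I expect is the careful bookkeeping in the direction ``edge-path moves lift to $\simeq_2$-moves'': one must show that an elementary contraction of an edge loop across a simplex $\tau$ of $\NNN(G)$ — which may have dimension larger than $2$ — can be realized, after inserting appropriate common-neighbor vertices, as a sequence of the local moves~(1) and (2) on walks in $G$. Because $\tau$ being a simplex only guarantees a \emph{single} common neighbor $m_\tau$ of all its vertices, the natural strategy is to route the even walk through $m_\tau$: replace the relevant segment of the edge loop, all of whose vertices lie in $\tau$, by a walk that alternates between vertices of $\tau$ and the fixed vertex $m_\tau$, then use move~(2) at the even coordinates to slide the $\tau$-vertices and move~(1) to cancel, finally rechoosing common neighbors back to the original odd entries. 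Handling the based-point and the fact that $v$ itself might have several incident edges (so several potential ``first steps'') requires a small compatibility check, but contributes no real difficulty. The naturality claim — that a based graph homomorphism $G \to G'$ induces on $\pi_1^2(-)_{ev}$ the map induced on $\pi_1(\NNN(-))$ by the simplicial map $\NNN(G) \to \NNN(G')$ — is then immediate from the explicit description, since both maps send the class of $(x_0, \ldots, x_{2k})$ to the class of $(f(x_0), \ldots, f(x_{2k}))$.
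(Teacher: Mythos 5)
The paper does not prove Theorem~\ref{theorem neighborhood complex}; it is quoted verbatim from Theorem~1.1 of \cite{MatsushitaJMSUT} and used as a black box (e.g.\ in Remark~\ref{remark not simply connected} and in the commented-out Theorem comparing images under $\pi_1(\NNN(\cdot))$). So there is no ``paper's own proof'' to compare against, and I can only assess your argument on its own terms.

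Your approach is the natural one and its skeleton is correct: define the map on representatives by projecting an even closed walk $(x_0,\dots,x_{2k})$ to the edge loop $(x_0,x_2,\dots,x_{2k})$ in $\NNN(G)$, observe that $\simeq_2$ preserves parity of length so even walks form a closed system under the generating moves, show the map is well-defined and surjective on $1$-homotopy classes by the ``choose a common neighbour for each consecutive pair'' lift, and then match the relations on both sides. Your treatment of the type-(2) move at an even coordinate (decompose the swap $x_{2i}\rightsquigarrow y_{2i}$ into an insertion across the $2$-simplex with common neighbour $x_{2i-1}$ followed by a deletion across the $2$-simplex with common neighbour $x_{2i+1}$) is exactly right, and the converse direction (route through a fixed common neighbour $m$ of $\{a,b,c\}$, then cancel the spike $b,m$ by a type-(1) move) works; your worry about higher-dimensional simplices is unfounded because the edge-path group relations only involve $2$-simplices anyway.

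There is, however, one concrete inaccuracy in your case analysis. You assert that a type-(1) move at an odd position ``does not change the edge path at all.'' It does. If the longer walk is $(y_0,\dots,y_{m+2})$ with $y_k=y_{k+2}$ and $k$ odd, then the inserted entries sit at an even and an odd position, so after deletion the edge path changes from $(\dots,y_{k-1},y_{k+1},y_{k+3},\dots)$ to $(\dots,y_{k-1},y_{k+3},\dots)$: a vertex is removed, not merely a hidden odd-coordinate rechoice. This is still a legal elementary edge-path homotopy --- the triple $\{y_{k-1},y_{k+1},y_{k+3}\}$ has common neighbour $y_k=y_{k+2}$ and hence spans a simplex of $\NNN(G)$, so contracting $(y_{k-1},y_{k+1},y_{k+3})$ to $(y_{k-1},y_{k+3})$ is allowed --- but you must actually say this rather than claim the edge path is unchanged. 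With that sentence repaired, the argument is complete; I would also state explicitly the parity-preservation observation (every $\simeq_2$-generator changes walk length by $0$ or $\pm 2$), since it is what guarantees that $\pi_1^2(G,v)_{ev}$ is genuinely the quotient of the even walks by $\simeq_2$ restricted to even walks, which is what your correspondence actually identifies with the edge-path group.
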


\subsection{$2$-covering maps} \label{subsection 2-covering}

\begin{definition}
Let $G$ be a graph and $x$ a vertex of $G$. Define $N_G(x)$ and $N_G^2(x)$ by
\[ N_G(x) = \{ y \in V(G) \mid (x,y) \in E(G) \}, \quad N_G^2(x) = \bigcup_{y \in N_G(x)} N_G(y).\]
We write $N(x)$ (or $N^2(x)$) instead of $N_G(x)$ (or $N^2_G(x)$, respectively).
\end{definition}

\begin{definition}
A \emph{covering map} or a \emph{$1$-covering map} is a graph homomorphism $p \colon G \to H$ such that $p|_{N(v)} \colon N(v) \to N(p(v))$ is bijective for every $v \in V(G)$. A \emph{$2$-covering map} is a graph homomorphism $p \colon G \to H$ such that for every $v \in V(G)$, the maps $p|_{N(v)} \colon N(v) \to N(p(v))$ and $p|_{N^2(v)} \colon N^2(v) \to N^2(p(v))$ are bijective.
\end{definition}

A $2$-covering map $p\colon G \to H$ is said to be \emph{connected} if $G$ is connected. A based graph homomorphism $p \colon (G,v) \to (H,w)$ is called a \emph{based $2$-covering map} if the underlying graph homomorphism $p \colon G \to H$ is a $2$-covering map.

\begin{theorem}[{\cite[Proposition 6.9]{MatsushitaJMSUT}}]
Let $(G, v)$ be a based graph. Then there is a connected based $2$-covering map $p \colon (\tilde{G}, \tilde{v}) \to (G,v)$ such that $\pi_1^2(\tilde{G},\tilde{v})$ is trivial. Moreover, such a based $2$-covering over $(G,v)$ is unique up to isomorphisms.
\end{theorem}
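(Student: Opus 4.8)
The plan is to imitate the classical construction of the universal covering space by walk classes, carried out inside the theory of $2$-coverings rather than topology. For the \emph{construction}, let $V(\tilde G)=\bigsqcup_{w\in V(G)}\pi_1^2(G,v,w)$ be the set of $2$-homotopy classes of walks in $G$ starting at $v$, put $\tilde v=[(v)]$, and let $p$ send the class of a walk to its terminal vertex; this is well defined since both generating relations of $\simeq_2$ fix the endpoints of a walk. Declare $[\alpha]$ and $[\beta]$ adjacent in $\tilde G$ precisely when there are representatives with $\beta=\alpha$-$y$, where $y$ is the terminal vertex of $\beta$. A one-line computation with relation~(1) shows this relation is symmetric (and that it treats loops of $G$ correctly), so $\tilde G$ is a graph and $p$ is a graph homomorphism; moreover $\tilde G$ is connected, because for a walk $\alpha=(v=x_0,x_1,\dots,x_n)$ the sequence $[(x_0)],[(x_0,x_1)],\dots,[\alpha]$ is a walk in $\tilde G$ from $\tilde v$ to $[\alpha]$.

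Next I would check that $p$ is a $2$-covering map. Fix $[\alpha]$ with terminal vertex $w$. The assignment $y\mapsto[\alpha$-$y]$ is a bijection $N_G(w)\to N_{\tilde G}([\alpha])$: it is surjective by the definition of the edges of $\tilde G$, and injective because $p$ sends $[\alpha$-$y]$ back to $y$. Likewise $N^2_{\tilde G}([\alpha])=\{[\alpha$-$y$-$z]\mid y\in N_G(w),\ z\in N_G(y)\}$ maps onto $N^2_G(w)$ via $[\alpha$-$y$-$z]\mapsto z$, and this map is injective because if $[\alpha$-$y$-$z]$ and $[\alpha$-$y'$-$z]$ share an image then the two walks differ in the single coordinate $y$ versus $y'$, so relation~(2) identifies their classes. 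This last point is exactly where $2$-homotopy, rather than $1$-homotopy, is needed, and I expect the bookkeeping around it — and around the lifting lemmas below — to be the only delicate part of the argument.

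To see $\pi_1^2(\tilde G,\tilde v)$ is trivial, I would first record two standard facts about an arbitrary $2$-covering $q$: every walk downstairs has a unique lift with prescribed initial vertex (immediate from bijectivity of $q$ on each $N$); and an elementary $2$-homotopy move of type~(1) or~(2) lifts to a move of the same type — the type~(1) case using $N$-bijectivity, the type~(2) case using $N^2$-bijectivity — so $2$-homotopic walks lift to walks with the same terminal vertex. Now a closed walk $\tilde\alpha$ at $\tilde v$ is the unique $p$-lift of $\alpha:=p_*\tilde\alpha$ starting at $\tilde v$; but $[(x_0)],[(x_0,x_1)],\dots,[\alpha]$ is also such a lift, so its terminal vertex $[\alpha]$ equals $\tilde v$, i.e.\ $\alpha\simeq_2(v)$ in $G$, and lifting this $2$-homotopy yields $\tilde\alpha\simeq_2(\tilde v)$ in $\tilde G$.

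For uniqueness, let $p'\colon(\tilde G',\tilde v')\to(G,v)$ be another connected based $2$-covering with trivial $\pi_1^2$. Using the previous paragraph, define $\phi\colon\tilde G\to\tilde G'$ by sending $[\alpha]$ to the terminal vertex of the $p'$-lift of $\alpha$ starting at $\tilde v'$; this is well defined (since $2$-homotopic walks lift to walks with the same endpoint) and a based graph homomorphism over $G$, because lifting $\alpha$-$y$ just extends the lift of $\alpha$ by one step. Conversely, since $\tilde G'$ is connected with trivial $\pi_1^2$ at every vertex, any two walks in $\tilde G'$ between fixed endpoints are $2$-homotopic, so $\tilde w'\mapsto[p'_*(\tilde\gamma)]$ for any walk $\tilde\gamma\colon\tilde v'\to\tilde w'$ is a well-defined based graph homomorphism $\psi\colon\tilde G'\to\tilde G$ over $G$. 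Choosing, in the definition of $\psi$, the walk $\tilde\gamma$ to be the relevant lift gives $\psi\circ\phi=\id$ and $\phi\circ\psi=\id$ directly, so $(\tilde G,\tilde v)\cong(\tilde G',\tilde v')$ over $(G,v)$, which finishes the proof.
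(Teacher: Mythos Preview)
The paper does not prove this statement; it is quoted from \cite[Proposition~6.9]{MatsushitaJMSUT} and used as a black box. Your argument is correct and is the natural adaptation of the classical walk-class construction of the universal covering to the $2$-homotopy setting: the two places where the $2$-covering condition (as opposed to the ordinary covering condition) actually matters --- injectivity of $p|_{N^2}$, which needs relation~(2), and lifting of type~(2) elementary moves, which needs bijectivity of $q|_{N^2}$ --- are exactly the places you flag, and both are handled correctly.
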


We call this graph $\tG$ the \emph{universal $2$-covering of $G$}.

\begin{remark} \label{remark universal bipartite}
The universal $2$-covering $\tG$ of $G$ is always simple and bipartite. Indeed, $\pi_1^2(\tG,\tv) \cong 1$ implies that there is no odd element of $\pi_1^2(\tG,\tv)$, and this means that $\tG$ has neither loops nor odd cycles.
\end{remark}

We will frequently use the following analogy to the lifting criterion of covering spaces \cite[Proposition 1.33]{Hatcher}.

\begin{lemma}[{lifting criterion \cite[Lemma 6.8]{MatsushitaJMSUT}}] \label{lemma lifting}
Let $p \colon (G,v) \to (H, w)$ be a based $2$-covering map, and $f \colon (T,x) \to (H, w)$ be a based graph homomorphism. Assume that $T$ is connected. Then the following are equivalent:
\begin{enumerate}
\item $f_* (\pi_1^2(T,x)) \subset p_*(\pi_1^2(G,v))$.

\item There is a based graph homomorphism $\tilde{f} \colon (T,x) \to (G,v)$ such that $p \circ \tilde{f} = f$.

\item There is a unique based graph homomorphism $\tilde{f} \colon (T,x) \to (G,v)$ such that $p \circ \tilde{f} = f$.
\end{enumerate}
\end{lemma}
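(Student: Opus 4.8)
The plan is to mimic the proof of the classical lifting criterion for covering spaces \cite[Proposition 1.33]{Hatcher}, but carried out entirely combinatorially with walks in place of paths and with $2$-homotopy in place of ordinary homotopy. The implication $(2)\Rightarrow(1)$ is the easy direction: if $\tf\colon(T,x)\to(G,v)$ satisfies $p\circ\tf=f$, then $f_*=p_*\circ\tf_*$ on $\pi_1^2(T,x)$, so $f_*(\pi_1^2(T,x))=p_*(\tf_*(\pi_1^2(T,x)))\subset p_*(\pi_1^2(G,v))$, and there is nothing more to do.

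For $(1)\Rightarrow(2)$ I would construct $\tf$ explicitly, vertex by vertex, using unique walk lifting for $2$-coverings. First I need the basic unique-lifting property: for a based $2$-covering $p\colon(G,v)\to(H,w)$ and any walk $\gamma$ in $H$ starting at $w$, there is a unique walk $\tilde\gamma$ in $G$ starting at $\tilde v=v$ with $p\circ\tilde\gamma=\gamma$; this follows directly from the bijectivity of $p|_{N(u)}$ at each vertex, extending the lift one step at a time. Next, since $T$ is connected, every vertex $t\in V(T)$ is joined to $x$ by some walk $\alpha$ in $T$; I set $\tf(t)$ to be the terminal vertex of the lift $\widetilde{f\circ\alpha}$ (the lift of the walk $f\circ\alpha$ in $H$, started at $v$). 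The content of the construction is showing this is well defined, i.e.\ independent of the choice of $\alpha$: if $\alpha,\alpha'$ are two walks from $x$ to $t$, then $\alpha\cdot\overline{\alpha'}$ is a closed walk at $x$ (here $\overline{\cdot}$ denotes the reverse walk), so $f_*[\alpha\cdot\overline{\alpha'}]\in f_*(\pi_1^2(T,x))\subset p_*(\pi_1^2(G,v))$; hence $f\circ(\alpha\cdot\overline{\alpha'})$ is $2$-homotopic to a loop $p\circ\beta$ for a closed walk $\beta$ at $v$ in $G$. The key point is then that $2$-homotopic walks in $H$ have lifts with the same endpoint — this is the analogue of the homotopy lifting property, and must be checked against the two generating moves of $\simeq_2$: inserting/deleting a back-and-forth pair $(x_k,x_{k\pm1},x_k)$ (which lifts to a back-and-forth pair because $p|_{N(u)}$ is injective) and changing a single interior vertex $x_i$ to $x_i'$ with $x_{i-1}\sim x_i'\sim x_{i+1}$ (which is where the second defining condition of a $2$-covering, bijectivity of $p|_{N^2(u)}$, is used: it guarantees the lift of $x_i'$ is uniquely determined and compatible with the lifts of $x_{i-1},x_{i+1}$). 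Combining these, the lift of $f\circ\alpha$ and the lift of $f\circ\alpha'$ have the same terminal vertex, so $\tf(t)$ is well defined. Finally I would check $\tf$ is a graph homomorphism (if $t\sim_T t'$, append the edge $(t,t')$ to a walk to $t$ and use that lifts of walks are walks, so $\tf(t)\sim_G\tf(t')$), that $\tf(x)=v$, and that $p\circ\tf=f$ (read off from $p\circ\widetilde{f\circ\alpha}=f\circ\alpha$ at terminal vertices).

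The main obstacle is the $2$-homotopy invariance of endpoints of lifts under move (2), the single-vertex replacement: one must verify that if $x_{i-1}\sim x_i,\ x_i\sim x_{i+1}$ and $x_{i-1}\sim x_i',\ x_i'\sim x_{i+1}$ in $H$, then lifting $(\ldots,x_{i-1},x_i,x_{i+1},\ldots)$ and $(\ldots,x_{i-1},x_i',x_{i+1},\ldots)$ from a common starting lift yields the same vertex after $x_{i+1}$. This is exactly the situation controlled by the hypothesis that $p|_{N^2(u)}\colon N^2(u)\to N^2(p(u))$ is bijective, applied at $u=$ the lift of $x_{i-1}$: both $x_i$ and $x_i'$ lie in $N^2(p(u))$-adjacent configurations, and injectivity on $N^2$ forces the two one-step-further lifts to agree. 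Once this lemma is in hand, the rest of the argument is the standard bookkeeping of the classical proof.
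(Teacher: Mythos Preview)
The paper does not prove this lemma: it is quoted verbatim from \cite[Lemma~6.8]{MatsushitaJMSUT} and used as a black box, so there is no in-paper argument to compare against. Your proposal is correct and is exactly the expected combinatorial transcription of the classical covering-space lifting criterion; in particular, your identification of the crucial step---that the two candidate lifts of $x_{i+1}$ (one through $\tilde x_i$, one through $\tilde x_i'$) both lie in $N^2(\tilde x_{i-1})$ and hence coincide by injectivity of $p|_{N^2(\tilde x_{i-1})}$---is the right use of the $2$-covering hypothesis. Your final paragraph phrases this a bit loosely (``$N^2(p(u))$-adjacent configurations''), but the argument you outline is sound and would go through once written out as above.
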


\subsection{$2$-covering actions} \label{subsection 2-covering action}

\begin{definition}
Let $\Gamma$ be a group and $G$ a graph. A \emph{$\Gamma$-action on $G$} is a $\Gamma$-action on $V(G)$ such that for every $\gamma \in \Gamma$, the map $x \mapsto \gamma x$ is a graph homomorphism from $G$ to $G$.

For a $\Gamma$-action on a graph $G$, define the quotient graph $G/ \Gamma$ as follows: The vertex set of $G/ \Gamma$ is the orbit set $V(G)/ \Gamma$. The edge set of $G/ \Gamma$ is defined by
\[ E(G/ \Gamma) = \{ (\alpha, \beta) \in V(G / \Gamma) \times V(G / \Gamma) \mid (\alpha \times \beta) \cap E(G) \ne \emptyset \}.\]
In other words, $(\alpha, \beta) \in E(G/ \Gamma)$ if and only if there are $x \in \alpha$ and $y \in \beta$ satisfying $(x, y) \in E(G)$.
\end{definition}

It is easy to see that the projection $p_G \colon G \to G / \Gamma$ is a graph homomorphism.

\begin{definition}[the sentence before Proposition 6.10 of \cite{MatsushitaJMSUT}]
A $\Gamma$-action on a graph $G$ is a \emph{$2$-covering action} if $N^2(x) \cap N^2(\gamma x) = \emptyset$ for every $x \in V(G)$ and $\gamma \in \Gamma \setminus \{ 1_\Gamma\}$.
\end{definition}

Then the following equivalence holds:

\begin{proposition}[{see \cite[Proposition 6.10]{MatsushitaJMSUT}}] \label{proposition 2-covering action}
Let $G$ be a $\Gamma$-graph, and assume that the $\Gamma$-action is free as a set action. Then the following are equivalent:
\begin{enumerate}
\item The projection $G \to G/\Gamma$ is a $2$-covering map.

\item The $\Gamma$-action on $G$ is a $2$-covering action.
\end{enumerate}
\end{proposition}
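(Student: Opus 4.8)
The plan is to compute explicitly the neighborhood $N([v])$ and second neighborhood $N^2([v])$ of a vertex $[v] = p_G(v)$ of the quotient graph $G/\Gamma$, and then to translate both conditions into statements about $\Gamma$-orbits meeting neighborhoods. First I would record the elementary fact that, since every $\gamma \in \Gamma$ acts as a graph homomorphism, $\gamma N(v) = N(\gamma v)$ and $\gamma N^2(v) = N^2(\gamma v)$ for all $v \in V(G)$ and $\gamma \in \Gamma$. Using the definition of $E(G/\Gamma)$ and absorbing a group element so that an adjacency $\gamma_1 v \sim \gamma_2 w$ becomes $v \sim \gamma w$, one gets $[w] \in N([v])$ if and only if $\Gamma w \cap N(v) \ne \emptyset$. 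Iterating the same trick — this time absorbing a group element so that the two successive edges of a length-$2$ walk in $G/\Gamma$ come from a single orbit — yields $[u] \in N^2([v])$ if and only if $\Gamma u \cap N^2(v) \ne \emptyset$. Hence $N([v]) = p_G(N(v))$ and $N^2([v]) = p_G(N^2(v))$, so the restrictions $p_G|_{N(v)} \colon N(v) \to N([v])$ and $p_G|_{N^2(v)} \colon N^2(v) \to N^2([v])$ are automatically surjective for every $v$.

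Next I would characterize injectivity. Since the $\Gamma$-action on $V(G)$ is free, two distinct elements of $N(v)$ lie in a common orbit precisely when $N(v) \cap N(\gamma v) \ne \emptyset$ for some $\gamma \ne e_\Gamma$ (again using $N(\gamma v) = \gamma N(v)$). So $p_G|_{N(v)}$ is injective for every $v$ if and only if $N(v) \cap N(\gamma v) = \emptyset$ for all $v$ and all $\gamma \ne e_\Gamma$, and the identical argument shows $p_G|_{N^2(v)}$ is injective for every $v$ if and only if $N^2(v) \cap N^2(\gamma v) = \emptyset$ for all $v$ and all $\gamma \ne e_\Gamma$, which is exactly the $2$-covering action condition. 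Combining this with the surjectivity above, $p_G$ is a $2$-covering map if and only if both the ``$N$-condition'' and the ``$N^2$-condition'' hold. Thus (1) $\Rightarrow$ (2) is immediate, and for (2) $\Rightarrow$ (1) it remains only to deduce the $N$-condition from the $N^2$-condition.

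That last step is the one genuinely substantive point. Suppose $u \in N(v) \cap N(\gamma v)$ for some $\gamma \ne e_\Gamma$. Then $u \in N(v)$ gives $v \in N(u)$, and since $u \in N(v)$ we get $v \in N(u) \subset N^2(v)$; likewise $u \in N(\gamma v)$ gives $v \in N(u) \subset N^2(\gamma v)$. Hence $v \in N^2(v) \cap N^2(\gamma v)$, contradicting (2). Therefore (2) forces the $N$-condition as well, and the two statements are equivalent.

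The only real obstacle is the orbit bookkeeping in the first step: one must be careful, when unwinding ``$[v] \sim [w]$'' and ``$[w] \sim [u]$'' in $G/\Gamma$, to translate by suitable elements of $\Gamma$ so that a ``common orbit'' is replaced by a genuine ``common vertex'' of $G$; this is what makes the clean identities $N([v]) = p_G(N(v))$ and $N^2([v]) = p_G(N^2(v))$ hold, and once they are in place the rest is formal. (Loops and the possibility $v \sim \gamma v$ need no special treatment, being absorbed into the orbit arithmetic.)
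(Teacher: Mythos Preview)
The paper does not actually prove this proposition; it is stated with a citation to \cite[Proposition 6.10]{MatsushitaJMSUT} and no argument is given in the present paper. Your proof is correct and self-contained: the identification $N([v]) = p_G(N(v))$, $N^2([v]) = p_G(N^2(v))$ gives surjectivity for free, the injectivity of $p_G|_{N^2(v)}$ is literally the $2$-covering action condition, and your final step deriving the $N$-condition from the $N^2$-condition via $v \in N(u) \subset N^2(v) \cap N^2(\gamma v)$ is the one nontrivial point and is handled cleanly. This is presumably the same elementary argument as in the cited reference.
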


Let $(G,v)$ be a connected based graph and $(\tG,\tv)$ the universal $2$-covering over $(G,v)$. Then $\pi_1^2(G,v)$ acts on $\tG$ as follows: Let $\tx \in V(\tG)$, and let $\varphi$ be a walk joining $\tv$ to $\tx$. Let $\alpha \in \pi_1^2(G,v)$ and $\gamma$ a representative of $\alpha$. Then there is a walk $\psi$ on $\tG$ such that $p_G \circ \psi = \gamma \cdot (p_G \circ \varphi)$ starting from $\tv$. Define $\alpha \tx$ to be the terminal point of $\psi$. This is the description of the $2$-covering action of $\pi_1^2(G,v)$ on $\tG$ (see the proof of \cite[Theorem 6.11]{MatsushitaJMSUT}). In \cite{MatsushitaJMSUT}, the author considered a right action, which is due to the fact that the order of concatenation of walks is the reverse of that in this paper (see Remark~\ref{remark order of concatenation}).

This description of the $\pi_1^2(G,v)$-action on $\tG$ implies the following:

\begin{lemma} \label{lemma description of action}
Let $p_G \colon (\tG, \tv) \to (G,v)$ be the universal $2$-covering of $(G,v)$, and let $g \in \pi_1^2(G,v)$. Let $\varphi$ be a walk of $\tG$ joining $\tv$ to $g \tv$. Then the $2$-homotopy class of the closed walk $p_G \circ \varphi$ of $(G,v)$ coincides with $g$.
\end{lemma}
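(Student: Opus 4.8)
The plan is to deduce the statement directly from the defining property of the $\pi_1^2(G,v)$-action on $\tG$ recalled just above, together with two elementary facts: that $\tG$ is ``$2$-simply connected'' (its $2$-fundamental group is trivial) and that $p_G$ respects $2$-homotopy.

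First I would extract the relevant instance of the definition of the action. Taking $\tx = \tv$ and $\varphi$ the constant walk at $\tv$ in the description of the action, one obtains: for any representative $\gamma$ of $g$, the lift of $\gamma$ to $\tG$ starting at $\tv$ is a walk $\psi$ from $\tv$ to $g\tv$ with $p_G \circ \psi = \gamma$; in particular $[p_G \circ \psi] = [\gamma] = g$ in $\pi_1^2(G,v)$. From the same description I also note that $p_G(g\tv) = p_G(\tv) = v$ (the endpoint of $\gamma \cdot (p_G \circ \varphi)$ is the endpoint of $p_G \circ \varphi$), so that for an arbitrary walk $\varphi$ from $\tv$ to $g\tv$ the walk $p_G \circ \varphi$ really is a closed walk at $v$, and the assertion $[p_G \circ \varphi] = g$ makes sense.

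Next I would compare an arbitrary walk $\varphi$ from $\tv$ to $g\tv$ with the walk $\psi$ above. Since $\pi_1^2(\tG, \tv)$ is trivial and $\tG$ is connected, any two walks of $\tG$ with the same endpoints are $2$-homotopic: writing $\bar\psi$ for the reverse walk of $\psi$, the closed walk $\varphi \cdot \bar\psi$ at $\tv$ is $2$-homotopic to the constant walk, hence $\varphi \simeq_2 \varphi \cdot (\bar\psi \cdot \psi) = (\varphi \cdot \bar\psi) \cdot \psi \simeq_2 \psi$, using that concatenation of walks is strictly associative and that $\bar\psi \cdot \psi$ contracts to a constant walk by type-$(1)$ moves. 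Applying $p_G$ and using that a graph homomorphism carries each generating move $(1)$, $(2)$ of $\simeq_2$ to a move of the same type — which is exactly what makes the induced homomorphism $f_*$ on $2$-fundamental groups well defined — we get $p_G \circ \varphi \simeq_2 p_G \circ \psi$ in $\Omega(G,v,v)$, and therefore $[p_G \circ \varphi] = [p_G \circ \psi] = g$, as required.

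The argument is short, and I do not expect a genuine obstacle beyond bookkeeping: the points that require care are pinning down the definition of the action in the displayed form (being mindful of the left-action and concatenation-order conventions of Remark~\ref{remark order of concatenation}) and the two auxiliary facts, both already available, namely that triviality of $\pi_1^2(\tG, \tv)$ forces uniqueness of the $2$-homotopy class of a walk joining two prescribed vertices of $\tG$, and that $p_G$ sends $2$-homotopic walks to $2$-homotopic walks.
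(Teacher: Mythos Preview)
Your proof is correct and is exactly the spelled-out version of what the paper intends: the lemma is stated in the paper as an immediate consequence of the description of the action (no separate proof is given), and your two steps---specializing the action description to $\tx=\tv$ to produce a lift $\psi$ of a representative $\gamma$ of $g$, then using $\pi_1^2(\tG,\tv)=1$ together with functoriality of $p_G$ to compare an arbitrary $\varphi$ with $\psi$---are precisely what is needed to justify that implication.
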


Let $p_G \colon (\tG, \tv) \to (G,v)$ and $p_H \colon (\tH, \tw) \to (H,w)$ be the universal $2$-coverings. Let $f \colon (G,v) \to (H,w)$ be a based graph homomorphism. Then Lemma~\ref{lemma lifting} implies that there is a based graph homomorphism $\tf \colon (\tG,\tv) \to (\tH,\tw)$ satisfying $p_H \circ \tf = f \circ p_G$. Set $\Gamma = \pi_1^2(G,v)$. Then $\Gamma$ acts on $\tG$ as above. Moreover, $\Gamma$ acts on $\tH$ by $g \tw = f_*(g) \ty$ for $g \in \Gamma$ and $\ty \in V(\tH)$. By the above description of the actions on the universal $2$-coverings, the graph homomorphism $\tf \colon \tG \to \tH$ is $\Gamma$-equivariant.

\subsection{A bit of algebraic topology} \label{subsection covering space}

\begin{lemma}[{\cite[Example~2 and Theorem~A in Appendix]{Milnorbook}}] \label{lemma homotopy colimit}
Let $X$ be a simplicial complex and $(X_n)_{n \in \NN}$ an increasing sequence of subcomplexes of $X$. Suppose that $X_i$ is a deformation retract of $X_{i+1}$ for each $i$ and $X = \bigcup_{n \in \NN} X_n$. Then the inclusion $X_0 \hookrightarrow X$ is a homotopy equivalence.
\end{lemma}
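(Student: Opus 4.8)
The plan is to reduce the statement to two standard facts about CW/simplicial complexes: first, that a weak homotopy equivalence between CW complexes is a homotopy equivalence (Whitehead's theorem), and second, that the inclusion of a deformation retract is in particular a homotopy equivalence, hence induces isomorphisms on all homotopy groups. Granting these, it suffices to show that the inclusion $\iota \colon X_0 \hookrightarrow X$ induces an isomorphism $\pi_k(X_0, x_0) \to \pi_k(X, x_0)$ for every $k \ge 0$ and every basepoint $x_0 \in X_0$; since $X$ and $X_0$ are simplicial complexes, Whitehead's theorem then upgrades this to an honest homotopy equivalence.

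The main point is a compactness argument. First I would record that each inclusion $X_i \hookrightarrow X_{i+1}$ is a homotopy equivalence, being the inclusion of a deformation retract; composing, $X_0 \hookrightarrow X_n$ is a homotopy equivalence for every $n$, and in particular $\pi_k(X_0) \to \pi_k(X_n)$ is an isomorphism for all $k$. Now for surjectivity of $\pi_k(X_0) \to \pi_k(X)$: given a class represented by a map $S^k \to X$, its image is compact, hence contained in finitely many simplices of $X$, hence in some $X_n$; so the class comes from $\pi_k(X_n)$, and therefore from $\pi_k(X_0)$ since $\pi_k(X_0) \to \pi_k(X_n)$ is onto. For injectivity: a class in $\pi_k(X_0)$ that dies in $\pi_k(X)$ is witnessed by a nullhomotopy $D^{k+1} \to X$; its image is again compact, hence lies in some $X_n$, so the class already dies in $\pi_k(X_n)$, whence it is trivial in $\pi_k(X_0)$ by injectivity of $\pi_k(X_0) \to \pi_k(X_n)$. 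The same compactness argument handles $\pi_0$ (i.e. $k=0$, path components) with the evident modifications.

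The step I expect to be the only genuine subtlety — rather than truly hard — is making sure the compactness argument is applied to the right topology: one must use that the geometric realization $|X| = \bigcup_n |X_n|$ carries the weak (colimit) topology with respect to the filtration, so that a compact subset of $|X|$ meets only finitely many open simplices and hence lies in some $|X_n|$. This is automatic for geometric realizations of simplicial complexes, which is exactly the setting of the lemma, so no extra hypothesis is needed. Once this is in hand, the conclusion follows by citing Whitehead's theorem; alternatively, one can cite Milnor's treatment of infinite mapping telescopes and expanding unions directly, as indicated by the reference to \cite{Milnorbook}, which packages precisely this argument.
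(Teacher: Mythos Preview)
Your argument is correct: the compactness argument showing that maps from $S^k$ and $D^{k+1}$ into $|X|$ factor through some $|X_n|$, combined with the fact that each $X_0 \hookrightarrow X_n$ is a homotopy equivalence, gives that $\iota_* \colon \pi_k(X_0) \to \pi_k(X)$ is an isomorphism for all $k$, and Whitehead's theorem finishes the job. The only point to be careful about, which you flag correctly, is that $|X|$ carries the weak topology with respect to its simplices, so a compact subset meets only finitely many open simplices; since $X = \bigcup_n X_n$ as simplicial complexes, each of these finitely many simplices lies in some $X_n$, hence all of them lie in a common $X_N$.

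As for comparison with the paper: the paper does not give a proof of this lemma at all. It is stated with a bare citation to Milnor's book (the appendix on CW complexes, where the relevant fact about expanding unions of CW complexes is recorded), and is used as a black box in the proof of Theorem~\ref{theorem contractible 1} and in Lemma~\ref{lemma tree tree}. So your write-up is not an alternative to the paper's proof but rather a self-contained justification of a fact the paper simply imports. The approach you give is exactly the standard one underlying Milnor's statement, so there is no meaningful divergence to discuss.
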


\begin{theorem}[{see \cite[Proposition 1.39]{Hatcher} and its proof}] \label{theorem covering}
Let $(X,x_0)$ be a based simplicial complex and $p \colon (Y, y_0) \to (X,x_0)$ a connected covering space. Let $\Gamma_0$ be a group, and suppose that $\Gamma_0$ acts on $Y$ freely and $p(\gamma y) = p(y)$ for every $\gamma \in \Gamma_0$ and for every $y \in Y$. If
\[ p^{-1} (x_0) = \{ \gamma y_0 \; | \; \gamma \in \Gamma_0 \},\]
then there is a following short exact sequence of groups:
\[ 1 \to \pi_1(Y,y_0) \to \pi_1(X,x_0) \xrightarrow{\Psi} \Gamma_0 \to 1.\]
Moreover, the group homomorphism $\Psi \colon \pi_1(X,x_0) \to \Gamma_0$ is described as follows: Let $[\gamma] \in \pi_1(X,x_0)$. Then there is a unique $\tgamma \colon [0,1] \to Y$ such that $\tgamma(0) = y_0$ and $p \circ \tgamma = \gamma$. Then define $\Psi([\gamma])$ by $\tgamma(1) = \Psi([\gamma]) \cdot y_0$.
\end{theorem}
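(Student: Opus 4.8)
The statement is the classical regular-covering correspondence, and the plan is to run the standard unique path-lifting argument, phrased so that it reduces to (or can be checked against) \cite[Proposition~1.39]{Hatcher}. Throughout I take $Y$ to be path-connected: this is implicit in the statement and holds in every application, where $Y$ is a connected component of some $\Hom^\Gamma(\tG,\tH)$; and I use only that a covering map enjoys the unique path-lifting and homotopy-lifting properties, which hold for any covering map of topological spaces, so no local hypothesis on the simplicial complexes is needed. Since $p(\gamma y)=p(y)$ for all $\gamma\in\Gamma_0$, $y\in Y$, each $\gamma$ acts as a deck transformation of $p$; with $Y$ connected a deck transformation is determined by the image of $y_0$, so freeness of the action makes $\Gamma_0\hookrightarrow\mathrm{Deck}(p)$, and the hypothesis $p^{-1}(x_0)=\{\gamma y_0\mid\gamma\in\Gamma_0\}$ says that $\Gamma_0$ acts simply transitively on the fiber over $x_0$. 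To define $\Psi$: for a loop $\gamma$ at $x_0$ let $\tgamma\colon[0,1]\to Y$ be the unique lift with $\tgamma(0)=y_0$; then $\tgamma(1)\in p^{-1}(x_0)$, so there is a unique $\alpha\in\Gamma_0$ with $\tgamma(1)=\alpha y_0$, and we set $\Psi([\gamma])=\alpha$. This is well defined on $\pi_1(X,x_0)$ by the homotopy-lifting property: a based homotopy $\gamma\simeq\gamma'$ lifts to a based homotopy of lifts, so the two endpoints agree.

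Next I would verify that $\Psi$ is a surjective homomorphism with kernel $p_*\pi_1(Y,y_0)$. For multiplicativity, let $\gamma_1,\gamma_2$ be loops at $x_0$ with lifts $\tgamma_1,\tgamma_2$ from $y_0$ ending at $\alpha_1 y_0$ and $\alpha_2 y_0$. Since $\alpha_1$ acts as a deck transformation, $\alpha_1\cdot\tgamma_2$ is the lift of $\gamma_2$ starting at $\alpha_1 y_0$ and ending at $\alpha_1\alpha_2 y_0$, so $\tgamma_1\cdot(\alpha_1\cdot\tgamma_2)$ lifts $\gamma_1\cdot\gamma_2$ from $y_0$, giving $\Psi([\gamma_1\cdot\gamma_2])=\alpha_1\alpha_2=\Psi([\gamma_1])\Psi([\gamma_2])$. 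It is essential here that $\Gamma_0$ acts on the \emph{left} (cf.\ Remark~\ref{remark order of concatenation}), so that $\alpha_1(\alpha_2 y_0)=(\alpha_1\alpha_2)y_0$ and one gets a homomorphism rather than an anti-homomorphism. Surjectivity uses path-connectedness of $Y$: given $\alpha\in\Gamma_0$, a path in $Y$ from $y_0$ to $\alpha y_0$ projects under $p$ to a loop $\gamma$ at $x_0$ with $\Psi([\gamma])=\alpha$. Finally, $[\gamma]\in\ker\Psi$ holds precisely when $\tgamma(1)=y_0$, that is, when $\tgamma$ is a loop at $y_0$, which happens precisely when $[\gamma]$ lies in $p_*\pi_1(Y,y_0)$; conversely any element of $p_*\pi_1(Y,y_0)$ maps to $e_{\Gamma_0}$. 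Since $p$ is a covering map, $p_*$ is injective, and we obtain the short exact sequence $1\to\pi_1(Y,y_0)\xrightarrow{p_*}\pi_1(X,x_0)\xrightarrow{\Psi}\Gamma_0\to 1$. Equivalently, the first paragraph already exhibits $p$ as a normal covering with deck group $\Gamma_0$, after which the sequence and the formula for $\Psi$ are exactly \cite[Proposition~1.39]{Hatcher}.

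There is no essential difficulty here — this is textbook covering-space theory — so the only points that require care are bookkeeping: matching the simplicial notion of covering space with the topological one (harmless, since only path- and homotopy-lifting are invoked), making sure path-connectedness of $Y$ is genuinely in force (it is, in our applications), and, above all, tracking the concatenation and action conventions so that $\Psi$ comes out a homomorphism and not its opposite. I expect this last convention check to be the only place where an error could plausibly slip in.
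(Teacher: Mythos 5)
Your proof is correct and follows exactly the standard covering-space argument that the paper itself invokes by citing \cite[Proposition 1.39]{Hatcher} and its proof; the paper offers no independent argument. Your convention-tracking (left action, Hatcher-style concatenation) is the one place where care is genuinely needed, and you have it right.
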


\subsection{Notation and conventions} \label{subsection notation}
For the rest of this paper, $G$ and $H$ are assumed to be finite non-empty connected graphs having at least one edge (see Remark~\ref{remark isolated}). The graph $H$ is not assumed to be square-free or even simple unless it is specified. Our results are valid for graphs having loops if it is not specified. Let $f \colon G \to H$ denote a graph homomorphism. Let $v \in V(G)$ and set $w = f(v)$. Let $\Gamma$ denote the $2$-fundamental group $\pi_1^2(G,v)$ of $(G,v)$. Let $p_G \colon (\tG, \tv) \to (G,v)$, $p_H \colon (\tH, \tw) \to (H,w)$ be the universal $2$-coverings of $(G,v)$ and $(H,w)$, respectively. Let $\tf \colon \tG \to \tH$ be a lift of $f$, and suppose $\tf(\tv) = \tw$.

\section{Construction of the covering} \label{section construction}

As was described in Subsection~\ref{subsection 2-covering action}, $\Gamma = \pi_1^2(G,v)$ acts on $\tG$ and $\tH$, and the lift $\tf \colon \tG \to \tH$ of $f$ is $\Gamma$-equivariant. A multi-homomorphism $\eta \in \Hom(\tG, \tH)$ is said to be \emph{$\Gamma$-equivariant} if $f_*(\gamma) \eta(\tx) = \eta(\gamma \tx)$ for every $\gamma \in \Gamma$ and $\tx \in V(\tG)$. Define $\Hom^\Gamma(\tG, \tH)$ to be the induced subposet of $\Hom(\tG, \tH)$ consisting of $\Gamma$-equivariant multi-homomorphisms.

Define the map
\[ \pi \colon \Hom^\Gamma(\tG, \tH) \to \Hom(G,H)\]
by $\pi(\eta)(p_G(\tx)) = p_H(\eta(\tx))$. It is straightforward to see that $\pi$ is a well-defined monotone map. The goal of this section is to prove that the map $\pi$ is a poset covering map.

\begin{proposition} \label{proposition Hom covering}
The map $\pi \colon \Hom^\Gamma(\tG, \tH) \to \Hom(G,H)$ is a poset covering map.
\end{proposition}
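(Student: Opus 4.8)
The plan is to verify the two conditions in Definition~\ref{definition poset covering map} directly, reducing everything to the corresponding lifting property of the universal $2$-coverings $p_G$ and $p_H$. Fix $\eta \le \eta'$ in $\Hom(G,H)$; we must show that liftings along $\pi$ exist and are unique, both "upward" (condition (1)) and "downward" (condition (2)).

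First I would analyze the local structure of $\pi$. Given $\xi \in \Hom^\Gamma(\tG,\tH)$, the value $\pi(\xi)(p_G(\tx)) = p_H(\xi(\tx))$ is well-defined precisely because $\xi$ is $\Gamma$-equivariant and $p_G$, $p_H$ identify orbits. The key observation is that, conversely, a $\Gamma$-equivariant multi-homomorphism $\xi$ is determined by its values on a set of orbit representatives, and for a fixed vertex $\tx \in V(\tG)$ the map $\xi(\tx) \mapsto p_H(\xi(\tx))$ is a bijection from the set of "admissible" subsets of $V(\tH)$ lying over a given subset $S \subseteq V(H)$ with $|S|$ elements — here "admissible" means the subset of $\tH$ must be a single sheet, i.e.\ $p_H$ restricted to it is injective. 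For this I would use that $p_H$ is a $2$-covering: a multi-homomorphism $\eta$ of $G$ assigns to adjacent $v,w$ sets $\eta(v),\eta(w)$ with $\eta(v)\times\eta(w)\subseteq E(H)$, so each element of $\eta(w)$ lies in $N_H$ of each element of $\eta(v)$; since $p_H|_{N(\tilde y)}$ is a bijection onto $N(p_H(\tilde y))$ for the already-lifted value $\tilde y$ over a vertex of $\eta(v)$, the lifted set over $\eta(w)$ at an adjacent vertex is forced. This is the mechanism that propagates a local choice uniquely across $\tG$, and combined with $\Gamma$-equivariance it shows that once we lift $\eta$ (or $\eta'$) at the base vertex $\tv$ compatibly with a given partial lift, the lift over all of $V(\tG)$ is unique; existence follows because $\tG$ is connected and we can spread the choice along walks, the $2$-covering property of $p_H$ guaranteeing consistency around even closed walks.

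Next I would handle the two conditions. For condition (1): given $\xi$ with $\pi(\xi) = \eta$ and given $\eta' \ge \eta$, I want a unique $\xi' \ge \xi$ with $\pi(\xi') = \eta'$. Since $\eta' \ge \eta$, we have $\eta(x) \subseteq \eta'(x)$ for each $x$; I define $\xi'(\tx)$ to be the unique subset of $V(\tH)$ lying over $\eta'(p_G(\tx))$ that contains $\xi(\tx)$ and on which $p_H$ is injective. The point is that $\xi(\tx)$ already singles out one sheet of $p_H$ over $\eta(p_G\tx)$, and because $p_H$ is a covering map, there is a unique way to enlarge a "single-sheet" set lying over $\eta(p_G\tx)$ to a single-sheet set lying over the larger base set $\eta'(p_G\tx)$ — namely, follow edges/non-edges as above. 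One then checks $\xi'$ is a multi-homomorphism (using $\eta'$ is, plus the $2$-covering property to transport the edge condition to $\tH$), is $\Gamma$-equivariant (because the construction is canonical and $\Gamma$ acts compatibly on both sides via $f_*$), and that $\xi' \ge \xi$ and $\pi(\xi') = \eta'$ by construction; uniqueness follows from the sheet-rigidity just discussed. Condition (2) is dual: given $\xi'$ with $\pi(\xi') = \eta'$ and $\eta \le \eta'$, define $\xi(\tx) = \xi'(\tx) \cap p_H^{-1}(\eta(p_G\tx))$. This is nonempty since $\eta(p_G\tx) \subseteq \eta'(p_G\tx)$ and $\xi'(\tx)$ maps onto $\eta'(p_G\tx)$; it is a multi-homomorphism and $\Gamma$-equivariant for the same reasons; $\pi(\xi) = \eta$ and $\xi \le \xi'$ are immediate; and uniqueness holds because any $\xi \le \xi'$ with $\pi(\xi)=\eta$ must have $\xi(\tx) \subseteq \xi'(\tx)$ and $p_H(\xi(\tx)) = \eta(p_G\tx)$, forcing $\xi(\tx) \subseteq \xi'(\tx)\cap p_H^{-1}(\eta(p_G\tx))$, and since $p_H$ is injective on $\xi'(\tx)$ both sides have the same cardinality.

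The main obstacle I anticipate is not any single step but making precise and clean the claim that a "single-sheet" subset of $V(\tH)$ over a connected configuration in $H$ is rigid — i.e.\ that choosing which sheet $\xi(\tx)$ sits in forces all the other values $\xi(\ty)$, compatibly across the whole of $\tG$ and with $\Gamma$-equivariance. Concretely, one must argue that for $\xi\in\Hom^\Gamma(\tG,\tH)$ and $\tx\sim\tx'$ in $\tG$, each element of $\xi(\tx')$ is the unique $p_H$-preimage, in $N(\tilde y)$ for $\tilde y\in\xi(\tx)$, of the corresponding element of $\pi(\xi)(p_G\tx')$ — and then iterate along walks, checking independence of the walk using that the $2$-homotopy class is what matters and $p_H$ is a $2$-covering. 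Once this rigidity lemma is isolated, conditions (1) and (2) become essentially bookkeeping. I would therefore state and prove that rigidity as the first step, then deduce Proposition~\ref{proposition Hom covering} from it.
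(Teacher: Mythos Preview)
Your approach is essentially the paper's: both verify Definition~\ref{definition poset covering map} directly using that $p_H$ is bijective on $N$- and $N^2$-neighbourhoods, and your intersection formula for condition (2) is equivalent to what the paper writes.

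Two places need tightening. First, in condition (1) your definition of $\xi'(\tx)$ as ``the unique subset lying over $\eta'(p_G\tx)$, containing $\xi(\tx)$, on which $p_H$ is injective'' is not well-posed: any lift of a point of $\eta'(p_G\tx)\setminus\eta(p_G\tx)$ could be adjoined, so there are many such subsets. The paper pins this down by the explicit formula $\teta'(\tx)=(p_H|_{N^2_{\tH}(v_{\tx})})^{-1}\bigl(\eta'(p_G\tx)\bigr)$ for a chosen $v_{\tx}\in\xi(\tx)$; the constraint $\xi'(\tx)\subseteq N^2_{\tH}(v_{\tx})$ is what your phrase ``follow edges'' should translate to, and it is forced on any multi-homomorphism $\xi'\ge\xi$. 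Second, you pass over the verification that the resulting $\xi'$ is a multi-homomorphism, and this is the one step with real content: for $\tx\sim\tx'$, $a\in\xi'(\tx)$, $b\in\xi'(\tx')$ one must produce $a\sim_{\tH}b$ knowing only $p_H(a)\sim_H p_H(b)$. The paper does this by first obtaining the chain $a\sim v_{\tx'}\sim v_{\tx}\sim b$ (each adjacency by lifting into an $N$-neighbourhood and identifying via injectivity on $N^2$), then lifting $p_H(b)$ into $N_{\tH}(a)$ and using injectivity of $p_H$ on $N^2_{\tH}(v_{\tx'})$ to conclude that this lift equals $b$. This step genuinely uses the \emph{second}-neighbourhood bijectivity of a $2$-covering, not just the $N$-bijectivity your rigidity lemma invokes, so it should not be waved through. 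Once these two points are made explicit your outline becomes a complete proof matching the paper's.
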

\begin{proof}
Let $\eta, \eta' \in \Hom(G, H)$ and suppose $\eta \le \eta'$. By the definition of poset covering maps (Definition~\ref{definition poset covering map}), it suffices to show the following:
\begin{enumerate}
\item For every $\teta \in \Hom^\Gamma(\tG,\tH)$ with $\pi(\teta) = \eta$, there is unique $\teta' \in \Hom^\Gamma(\tG,\tH)$ such that $\teta \le \teta'$ and $\pi(\teta') = \eta'$.
\item For every $\teta' \in \Hom^\Gamma(\tG,\tH)$ with $\pi(\teta') = \eta'$, there is unique $\teta \in \Hom^\Gamma(\tG,\tH)$ such that $\teta \le \teta'$ and $\pi(\teta) = \eta$.
\end{enumerate}

We first show (1). Choose $v_x \in \teta(x) \subset V(\tH)$ for each $x \in V(\tG)$ so that $v_{\gamma x} = \gamma v_x$. Then set $\teta'(x) = (p_H|_{N_{\tH}^2(v_x)})^{-1}(\eta'(x))$, where $p_H|_{N^2_{\tH}(v_x)} \colon N^2_{\tH}(v_x) \to N^2_H(p_H(v_x))$. Since $\tG$ has no isolated vertices (see Subsection~\ref{subsection notation}), we have $\teta(x) \subset N^2_{\tH}(v_x)$ and $p_H(\teta(x)) = \eta(x) \subset \eta'(x)$. Thus we have $\teta(x) \subset \teta'(x)$. Now we check the $\Gamma$-equivariance of $\teta'$. Since $\gamma v_x = v_{\gamma x}$, we have $\gamma \teta'(x) \subset \gamma N^2_{\tH}(v_x) \subset N^2_{\tH}(v_{\gamma x})$. Since $p_H(\gamma \teta'(x)) = \eta'(p_G(x)) = \eta'(p_G(\gamma x)) = p_H(\teta'(\gamma x))$, we have $\gamma \teta'(x) = \teta'(\gamma x)$. Hence $\teta'$ is $\Gamma$-equivariant.

We show that $\tilde{\eta}'$ is a multi-homomorphism. Let $(x,y) \in E(\tG)$, and let $a \in \teta'(x)$ and $b \in \teta'(y)$. It suffices to show that $a \sim_{\tH} b$. Since $p_H(a) \in \eta'(x) \subset N_H(p_H(v_y))$, there is $a' \in N_{\tH}(v_y)$ such that $p_H(a) = p_H(a')$. Since $v_x \sim_{\tH} v_y \sim_{\tH} a'$, we have $a' \in N^2_{\tH}(v_x)$. Since $a,a' \in N^2_{\tH}(v_x)$, we have $a = a'$ from the injectivity of $p_H|_{N^2_{\tH}(v_x)}$. Therefore we have $a = a' \sim_{\tH} v_y$. Similarly, we have $v_x \sim_{\tH} b$. Hence we have
\[ a \sim_{\tH} v_y \sim_{\tH} v_x \sim_{\tH} b.\]
Note that $p_H(a) \in \eta'(x)$, $p_H(b) \in \eta'(y)$ and $\eta'$ is a multi-homomorphism. Hence we have $p_H(a) \sim_{H} p_H(b)$. Then there is $b' \in N_{\tH}(a)$ such that $p_H(b') = p_H(b)$. Since $b,b' \in N_{\tH}^2(v_y)$, we have $b = b'$. Therefore we conclude that $a \sim b' = b$ and $\teta'$ is a multi-homomorphism.

Finally, we mention the uniqueness of $\teta'$. Suppose that $\teta'' \in \Hom^\Gamma(\tG, \tH)$ such that $\eta \le \teta''$ and $\pi(\teta'') = \eta'$. Since $\teta''$ is a multi-homomorphism, we have $\teta''(x) \subset N^2_{\tH}(v_x)$ for every $x \in V(\tG)$. Since $p_H |_{N^2_{\tH}(v_x)}$ is bijective and $p_H(\teta''(x)) = \eta'(p_G(x)) = p_H(\teta'(x))$, we have $\teta'(x) = \teta''(x)$. This completes the proof of (1).

The proof of (2) is easier. Indeed, for every $x \in V(\tG)$, let $v_x \in \teta'(x)$ so that $\gamma v_x = v_{\gamma x}$. Define $\teta(x)$ by $\teta(x) = (p_H|_{N^2(v_x)})^{-1}(\eta(x))$. Then $\teta(x)$ is clearly a multi-homomorphism since $\teta(x) \subset \teta'(x)$ and $\teta'$ is a multi-homomorphism. The proofs for verifying the other conditions are the same as that of (1). This completes the proof.
\end{proof}

\begin{remark} \label{remark Hom covering}
In the above proof, the hypothesis used in most parts is that $p_H \colon \tH \to H$ is a $2$-covering map. Hence, other variations of this proposition, such as the following, can also be shown by the same argument: If $p \colon H_1 \to H_2$ be a $2$-covering map, then the map $p_* \colon \Hom(G,H_1) \to \Hom(G, H_2)$ is a poset covering map. In fact, this version appeared in an earlier draft of \cite{MatsushitaJMSUT} (see \cite[Lemma 7.1]{MatsushitaJMSUT2}).
\end{remark}

\section{Contractibility} \label{section contractibility}

 As is the usual case, every $\Gamma$-equivariant graph homomorphism from $\tG$ to $\tH$ is regarded as an element of $\Hom^\Gamma(\tG, \tH)$. Hence we can regard $\tf \in \Hom^\Gamma(\tG, \tH)$, and let $\Hom^\Gamma(\tG, \tH)_{\tf}$ denote the component of $\Hom^\Gamma(\tG,\tH)$ containing $\tf$.

It is straightforward to see that if $H$ is square-free then $\tH$ is a tree (see \cite{TW} for example). Thus, to prove Theorem~\ref{theorem contractibility}, it suffices to show the following theorem.

\begin{theorem} \label{theorem contractible 1}
If $\tH$ is a tree, then $\Hom^\Gamma(\tG,\tH)_{\tf}$ is contractible.
\end{theorem}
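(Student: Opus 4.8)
The plan is to exhibit \(\Hom^\Gamma(\tG,\tH)_{\tf}\) as an increasing union of subcomplexes, each a deformation retract of the next, the smallest of which is the single vertex \(\tf\); Lemma~\ref{lemma homotopy colimit} then finishes the proof. Since \(\tH\) is a tree it carries the path metric \(d_{\tH}\), and since every \(f_*(\gamma)\) acts on \(\tH\) as a graph automorphism, hence an isometry, and \(\tf(\gamma x)=f_*(\gamma)\tf(x)\), the quantity
\[ r(\eta):=\max_{x\in V(\tG)}\ \max_{a\in\eta(x)} d_{\tH}\bigl(a,\tf(x)\bigr) \]
behaves well under the \(\Gamma\)-action. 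Set \(X_n=\{\eta\in\Hom^\Gamma(\tG,\tH)_{\tf}\mid r(\eta)\le n\}\). If \(\eta\le\eta'\) then \(\eta(x)\subseteq\eta'(x)\) for all \(x\), so \(r(\eta)\le r(\eta')\); hence each \(X_n\) is downward closed, i.e.\ a subcomplex of the order complex, and \(X_0=\{\tf\}\) while \(X_n\subseteq X_{n+1}\). Finally, if \(\eta\) lies in the component of \(\tf\) then \(\eta\) is joined to \(\tf\) by a finite fence of multi-homomorphisms (Proposition~\ref{proposition homotopy} together with the definition of connectedness), and since along each step of the fence the two value sets at a vertex are nested while, in a tree, every value set has diameter at most \(2\) (see the next paragraph), a telescoping estimate gives \(r(\eta)<\infty\). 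Thus \(\bigcup_n X_n=\Hom^\Gamma(\tG,\tH)_{\tf}\).

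The combinatorial input driving everything is that in a tree two distinct vertices have at most one common neighbour. Hence, for \(\eta\in\Hom(\tG,\tH)\) and an edge \(\{x,y\}\) of \(\tG\), if \(|\eta(x)|\ge 2\), say \(a\neq a'\in\eta(x)\), then every element of \(\eta(y)\) is a common neighbour of \(a\) and \(a'\), so \(\eta(y)=\{b\}\) is a singleton and \(\eta(x)\subseteq N_{\tH}(b)\); in particular at most one of \(\eta(x),\eta(y)\) is non-singleton, each value set lies in a sphere of radius \(1\), and the neighbours of a vertex carrying an element far from its \(\tf\)-value have single-vertex values. It remains to show that \(X_n\) is a deformation retract of \(X_{n+1}\). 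The intended mechanism is to push the \emph{far part} of each \(\eta\in X_{n+1}\) — the vertices \(a\in\eta(x)\) with \(d_{\tH}(a,\tf(x))=n+1\) — one edge closer to \(\tf(x)\) along \(\tH\). Because this operation is phrased purely in terms of the tree \(\tH\) and the equivariant map \(\tf\), and because \(f_*(\gamma)\) is an isometry of \(\tH\) carrying \(\tf(x)\) to \(\tf(\gamma x)\), the resulting map commutes with the \(\Gamma\)-action, so \(\Gamma\)-equivariance of multi-homomorphisms is preserved and the retraction restricts to \(\Hom^\Gamma(\tG,\tH)\); wherever the construction is monotone and idempotent it can be packaged as an ascending or descending closure operator and Theorem~\ref{theorem closure operator} applied.

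The hard part is that this push \emph{cannot} be carried out vertex by vertex. If \(b\in\eta(y)\) is a far vertex and we replace it by its neighbour \(b^-\) toward \(\tf(y)\), then for every \(\tG\)-neighbour \(x\) of \(y\) the set \(\eta(x)\subseteq N_{\tH}(b)\) must be moved into \(N_{\tH}(b^-)\); but \(N_{\tH}(b)\cap N_{\tH}(b^-)=\emptyset\) in a tree, so the modification forces further modifications at \(x\), which then propagate through \(\tG\). Controlling this cascade — showing it is well defined, that it terminates (it only moves values monotonically toward their \(\tf\)-values, which have distance \(0\)), and that the assignment \(\eta\mapsto(\text{result of the cascade})\) assembles into an honest monotone deformation retraction of \(|X_{n+1}|\) onto \(|X_n|\) compatible with the poset order — is the technical heart of the proof. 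I expect it to require a careful case analysis of the local configurations of far vertices, organised around the radius-\(1\) structure of value sets and the uniqueness of common neighbours in \(\tH\); by contrast, the \(\Gamma\)-equivariance is automatic at every step.
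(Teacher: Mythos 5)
Your high-level framework matches the paper's: filter $\Hom^\Gamma(\tG,\tH)_{\tf}$ by a radius function measuring how far $\eta$ is from $\tf$ in the tree metric of $\tH$, get $\{\tf\}=X_0\subset X_1\subset\cdots$ exhausting the component, and hope to show each inclusion is a deformation retract via closure operators, with Lemma~\ref{lemma homotopy colimit} finishing the job. The $\Gamma$-equivariance considerations and the tree facts you invoke (unique paths, unique common neighbours, so a value set of size $\ge 2$ at $x$ forces singletons at all neighbours of $x$) are exactly what the paper uses. You also correctly identify the obstruction: you cannot push the far vertices back one edge simultaneously, because changing $\eta(y)$ from $\{b\}$ to $\{b^-\}$ breaks the multi-homomorphism condition at every neighbour $x$ of $y$ whose value set sits in $N_{\tH}(b)$, and the repair propagates through $\tG$.

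But that is where your argument stops: you announce that controlling the cascade ``is the technical heart of the proof'' and defer it to ``a careful case analysis,'' which is to concede the step rather than to give it. This is a genuine gap, and it is not small, because the cascade you describe is \emph{not} what the paper does — in fact the proposition ``$X_{n}$ is a deformation retract of $X_{n+1}$ via a single push-back operator'' has no obvious proof, precisely because such a cascade operator would need to be monotone as a poset map and it is unclear that it is. The paper sidesteps the cascade entirely by introducing a much finer filtration between $X_{n-1}$ and $X_n$. Concretely, it forms the induced subgraph $G(\eta)$ of $G$ on vertices where the radius is maximal and equips it with a \emph{direction}: roughly, $x\to x'$ if pushing back at $x'$ is ``safe given $x$,'' by the geometry of the tree (Proposition~\ref{proposition direction}). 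It then shows $G(\eta)$ is acyclic (Lemma~\ref{lemma no cycle}), hence has a sink, and refines $X_n$ by $X_{n,m}$ (bounding directed-path length in $G(\eta)$) and $X_{n,m,i}$ (forbidding specific maximal directed paths one at a time). Each inclusion $X_{n,m,i-1}\subset X_{n,m,i}$ is then handled by an explicit pair of closure operators $U$ and $D$ that modify $\eta$ only on a single sink vertex class, so there is no cascade to control. You would need some device of this kind — a well-founded ordering of the far vertices dictating the order of push-backs — to turn your proposal into a proof; the directed-acyclic-graph structure is exactly that device, and it is the essential missing idea in your write-up.
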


The goal of this section is to prove Theorem~\ref{theorem contractible 1}. Throughout this section, we assume that \emph{$\tH$ is a tree}.

\subsection{Outline of the proof} \label{subsection outline}
Here we outline the proof of Theorem~\ref{theorem contractible 1}. We construct the filtration of $\Hom^\Gamma(\tG, \tH)_{\tf}$ in three stages as follows:
\begin{itemize}
\item For each $n \ge 0$, we construct a subposet $X_n$ such that
\[ \{ \tf\} = X_0 \subset X_1 \subset X_2 \cdots, \quad \Hom^\Gamma(\tG, \tH)_{\tf} = \bigcup_{n \ge 0} X_n.\]

\item Fix $n \ge 0$. For each $m \ge -1$, we construct a subposet $X_{n,m}$ such that
\[ X_{n-1} = X_{n,-1}, \quad X_{n,-1} \subset X_{n,0} \subset X_{n,1} \subset \cdots,\]
and $X_{n,m} = X_n$ for sufficiently large $m$.

\item Fix $n \ge 0$ and $m \ge -1$. For each $i \ge 0$, we construct a subposet $X_{n,m,i}$ such that
\[ X_{n,m,0} = X_{n,m-1}, \quad X_{n,m,0} \subset X_{n,m,1} \subset X_{n,m,2} \subset \cdots,\]
and $X_{n,m,i} = X_{n,m}$ for sufficiently large $i$.
\end{itemize}

Finally, we show that $X_{n,m,i-1}$ is a deformation retract of $X_{n,m,i}$ (Proposition~\ref{proposition deformation retract}), which concludes the proof of Theorem~\ref{theorem contractible 1} by Lemma~\ref{lemma homotopy colimit}.

\subsection{Subposet $X_n$}

In this subsection we define the subposet $X_n$ of $\Hom^\Gamma(\tG, \tH)$ for each $n \ge 0$. For $\eta \in \Hom^\Gamma(\tilde{G}, \tilde{H})_{\tilde{f}}$ and for $\tx \in V(\tilde{G})$, set
\[ u_\eta(\tx) = \max \{ d_{\tH}(\tf(\tx), \ty) \; | \; \ty \in \eta(\tx)\}.\]
Here $d_{\tH}$ denotes the path metric of $\tH$.

\begin{lemma}
For  every $\eta \in \Hom(\tG, \tH)$ and for every $\tx \in V(\tG)$, the integer $u_\eta(\tx)$ is even.
\end{lemma}
\begin{proof}
Since $\tH$ is bipartite (see Remark~\ref{remark universal bipartite}), there is a graph homomorphism $\varepsilon \colon \tH \to K_2$. Then $\varepsilon$ induces a map $\varepsilon_* \colon \Hom^\Gamma(\tG, \tH)_{\tf} \to \Hom(\tG, K_2) \cong S^0$. Since $u_\eta(\tx)$ is odd, $\eta(\tx)$ has an element $\ty$ such that $d_{\tH}(\tf(\tx), \ty)$ is odd. Thus, $\varepsilon_*(\eta)(\tx)$ contains an element different from $\varepsilon_* \tf(\tx) = \varepsilon(\tf(\tx))$. Hence we have $\varepsilon_*(\eta) \ne \tf$. This is a contradiction since $\Hom(\tG, K_2) \cong S^0$ and $\Hom^\Gamma(\tG,\tH)_{\tf}$ is connected.
\end{proof}

Since $\eta$ is $\Gamma$-equivariant, for every $\gamma \in \Gamma$ and $\tx \in V(\tG)$, we have $u_\eta(\gamma\tx) = u_\eta(\tx)$. Hence $u_\eta$ induces a function $\bar{u}_\eta$ on $V(G)$. Since $G$ is finite, $\bar{u}_\eta$ is bounded, which implies that $u_\eta$ is also bounded. Define the subposet $X_n$ of $\Hom^\Gamma(\tG, \tH)_{\tf}$ by
\[ X_n = \{ \eta \in \Hom^\Gamma(\tG, \tH)_{\tf} \; | \; \textrm{$u_\eta(\tx) \le 2n$ for every $\tx \in V(\tG)$}\}\]
Since $u_\eta$ is bounded, $\eta \in X_n$ holds for some $n$. Thus we have
\[ \{ \tf \} = X_0 \subset X_1 \subset X_2 \subset \cdots,  \quad \Hom^\Gamma(\tG, \tH)_{\tf} = \bigcup_{n \ge 0} X_n.\]
%These are the desired conditions described in Subsection~\ref{subsection outline}

\subsection{Directed subgraphs $\tG(\eta)$ and $G(\eta)$}

Our next task is to construct the filtration $X_{n,m}$ of $X_n$. To define the filtration, we need to introduce a directed subgraph $\tG(\eta)$ of $\tG$ and a directed subgraph $G(\eta)$ of $G$ for each $\eta \in X_n$.

First we define $\tG(\eta)$ and $G(\eta)$ as undirected graphs. Let $\eta \in X_n$. Define the subgraph $\tG(\eta)$ of $\tG$ to be the induced subgraph of $\tG$ whose vertex set is $u_\eta^{-1}(2n)$. Similarly, define the subgraph $G(\eta)$ of $G$ to be the induced subgraph of $G$ whose vertex set is $\bar{u}_\eta^{-1}(2n)$. Next, we consider the direction of each edge of $\tG(\eta)$ and $G(\eta)$. Here we recall that $\tilde{H}$ is a tree, and that for any two vertices of $\tH$ there is a unique path joining them.

\begin{proposition} \label{proposition direction}
Let $(\tx,\tx') \in \tG(\eta)$. Then one of the following two conditions holds, and the other does not.
\begin{enumerate}
\item The set $\eta(\tx)$ consists of one vertex $\ty$ of $\tH$ and for every element $\ty' \in \eta(\tx')$ with $d_{\tH}(\tf(\tx'), \ty') = 2n$, the path joining $\tf(\tx')$ to $\ty'$ cointains $\ty$.

\item The set $\eta(\tx')$ consists of one vertex $\ty'$ of $\tH$ and for every element $\ty \in \eta(\tx)$ with $d_{\tH}(\tf(\tx), \ty) = 2n$, the path joining $\tf(\tx)$ to $\ty$ cointains $\ty'$.
\end{enumerate}
\end{proposition}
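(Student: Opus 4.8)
The plan is to analyze the edge $(\tx,\tx')\in E(\tG(\eta))$ by using the tree structure of $\tH$ together with the fact that $\eta$ is a multi-homomorphism, so that $\eta(\tx)\times\eta(\tx')\subset E(\tH)$. First I would set $w=\tf(\tx)$, $w'=\tf(\tx')$; since $\tf$ is a graph homomorphism and $\tH$ is bipartite, $w\sim_{\tH} w'$, and the two bipartition classes of $\tH$ separate the vertices at even distance from $w$ from those at even distance from $w'$. For $\ty\in\eta(\tx)$ and $\ty'\in\eta(\tx')$ with $d_{\tH}(w,\ty)=d_{\tH}(w',\ty')=2n$, consider the unique paths $P$ from $w$ to $\ty$ and $P'$ from $w'$ to $\ty'$ in the tree $\tH$. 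Because $\ty\sim_{\tH}\ty'$ for all such pairs, I would argue that the two paths must overlap substantially: the unique path in $\tH$ from $\ty$ to $\ty'$ has length $1$, so in the tree spanned by $\{w,w',\ty,\ty'\}$ the configuration is forced. Concretely, either $\ty$ lies on $P'$ (i.e.\ $P'$ passes through $\ty$, which makes $\ty$ the vertex of $P'$ at distance $2n-1$ from $w'$), or $\ty'$ lies on $P$.

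The key combinatorial step is to make this dichotomy uniform over all choices of $\ty,\ty'$. I would first fix one pair $\ty_0\in\eta(\tx)$, $\ty_0'\in\eta(\tx')$ realizing the distance $2n$ on each side, establish the tree-geometry fact above for this pair, and then show that the side on which the ``containment'' occurs does not depend on the pair. Suppose, for contradiction, that for one pair the path from $w$ to $\ty_1$ contains $\ty_1'\in\eta(\tx')$ (case 2 behavior) while for another pair the path from $w'$ to $\ty_2'$ contains $\ty_2\in\eta(\tx)$ (case 1 behavior). Using that every vertex of $\eta(\tx)$ is adjacent to every vertex of $\eta(\tx')$, and that $\tH$ is a tree, I would derive that some pair of these vertices is at distance $>1$, or that $\eta(\tx)$ or $\eta(\tx')$ fails to be contained in a path of $\tH$, contradicting the multi-homomorphism condition. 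This should force one of the following: $\eta(\tx)=\{\ty\}$ is a singleton and every distance-$2n$ path from $w'$ to a point of $\eta(\tx')$ runs through $\ty$ (condition (1)), or symmetrically $\eta(\tx')$ is a singleton and condition (2) holds. Finally, the two conditions are mutually exclusive: if both held, then $\eta(\tx)=\{\ty\}$, $\eta(\tx')=\{\ty'\}$, the length-$2n$ path from $w$ to $\ty$ contains $\ty'$, and the length-$2n$ path from $w'$ to $\ty'$ contains $\ty$; in a tree this would give $d_{\tH}(\ty,\ty')\le 2n-2n<0$ type contradiction after comparing the two path lengths (more precisely, $\ty'$ at distance $2n-1$ from $w'$ would have to be on the $w$-to-$\ty$ geodesic, forcing $d_{\tH}(w,\ty)<2n$), so only one holds.

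The step I expect to be the main obstacle is proving that the choice between (1) and (2) is \emph{independent of the witnessing vertex}, i.e.\ the uniformity. The complete-bipartite adjacency $\eta(\tx)\times\eta(\tx')\subset E(\tH)$ is a strong constraint in a tree (it forces the ``smaller'' side to be a single vertex $c$ and the ``larger'' side to be contained in $N_{\tH}(c)$), and I would lean on this: since $\tH$ has no $4$-cycles, if $|\eta(\tx)|\ge 2$ and $|\eta(\tx')|\ge 2$ one already gets a $C_4$, so at least one of $\eta(\tx),\eta(\tx')$ is a singleton. Identifying which singleton governs the direction, and showing every extremal (distance $2n$) path on the other side threads through it, is then a matter of tracing geodesics in the tree from $w$ and $w'$; the adjacency $w\sim w'$ and the parity constraint $u_\eta\equiv 0\pmod 2$ pin down the branch point. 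Once the singleton side is identified, condition (1) or (2) reads off directly, and exclusivity follows from the length comparison sketched above.
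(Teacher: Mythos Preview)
Your plan is essentially correct and would go through, but it is organised differently from the paper's proof, and the ``uniformity over all witnessing pairs'' that you flag as the main obstacle turns out to be a non-issue in the paper's approach.

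The paper fixes a \emph{single} pair $\ty\in\eta(\tx)$, $\ty'\in\eta(\tx')$ with $d_{\tH}(\tf(\tx),\ty)=d_{\tH}(\tf(\tx'),\ty')=2n$, sets $P$ to be the geodesic from $\tf(\tx)$ to $\ty$, and splits on the single question ``is $\ty'\in P$?''. If $\ty'\notin P$ then $P$-$\ty'$ is a path of length $2n+1$; since every element of $\eta(\tx)$ is a neighbour of $\ty'$ at distance $\le 2n$ from $\tf(\tx)$, in the tree it must be the penultimate vertex of $P$-$\ty'$, namely $\ty$ --- so $\eta(\tx)=\{\ty\}$ is \emph{derived}, not assumed. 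A second length count then places $\tf(\tx')$ on $P$, after which the containment in (1) follows for \emph{every} extremal $\ty''\in\eta(\tx')$, since all such $\ty''$ are neighbours of $\ty$ lying off $P$. The case $\ty'\in P$ is handled by swapping the roles of $\tx$ and $\tx'$. Thus the singleton property, the choice between (1) and (2), and the ``for every'' clause are all read off from the position of one vertex $\ty'$ relative to one geodesic $P$; there is no need to compare two different witnessing pairs as in your second paragraph.

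Your $C_4$-freeness observation --- that in a tree, $\eta(\tx)\times\eta(\tx')\subset E(\tH)$ forces one side to be a singleton --- is correct and gives a pleasant shortcut to part of what the paper establishes, and the paper does not make it explicit. However, it does not by itself decide \emph{which} of (1) or (2) holds (both sides could be singletons, or the singleton side could be either one), so you still need the geodesic tracing you sketch afterwards; at that point your argument converges with the paper's. Your exclusivity sketch is in the right direction; in the paper it is immediate, since $\ty'\notin P$ already violates the containment clause of~(2).
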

\begin{proof}
Since $\tx' \in V(\tG(\eta))$, we have $u_\eta(\tx') = 2n$. Hence there is $\ty' \in \eta(\tx')$ such that $d_{\tH}(\tf(\tx'),\ty') = 2n$. Similarly, there is $\ty \in \eta(\tx)$ such that $d_{\tH}(\tf(\tx), \ty) = 2n$. Let $P$ be the path in $\tH$ joining $\tf(\tx)$ to $\ty$.

Suppose that $\ty'$ is not contained in $P$. Then the path $P$ can be extended to the path $P$-$\ty'$. Since $\tH$ is a tree, we have $d_{\tH}(\tf(\tx), \ty') = 2n+1$. Let $\ty_0 \in \eta(\tx)$. Since $\eta \in X_n$, $d_{\tH}(\tf(\tx), \ty_0) \le 2n$ and $\ty_0$ is adjacent to $\ty'$. Since $\tH$ is a tree, this implies that $\ty_0$ coincides with $\ty$. If $\tf(\tx')$ is not contained in $P$, then $\tf(\tx')$-$P$-$\ty'$ is a path joining $\tf(\ty)$ to $\ty'$ with length $2n+2$. This means that $d_{\tH}(\tf(\tx'), \ty') = 2n+2$, and this is a contradiction. Hence $\tf(\tx')$ is contained in the path $P$, and is the next point from the starting point $\tf(\tx)$. Hence the path joining $\tf(\tx')$ to $\ty'$ contains $\ty$, and this means that (1) holds but (2) does not hold.

On the other hand, suppose that $\ty'$ is contained in $P$. Since $\tH$ is a tree, $\ty'$ is the previous point of the terminal point $\ty$ of $P$. Since $d_{\tH}(\tf(\tx'), \ty') = 2n$, $\tf(\tx')$ is not contained in $P$. Then $Q = \tf(\tx')$-$P \setminus \ty$ is the path joining $\tf(\tx')$ to $\ty'$. Thus, applying the above paragraph to $Q$ instead of $P$, we conclude that (2) holds but (1) does not.
\end{proof}

We define the direction on an edge of $\tG(\eta)$ as follows: If $(\tx,\tx') \in E(\tG(\eta))$ satisfies (1) of Proposition~\ref{proposition direction}, then we consider the source of $e$ is $\tx$ and the target of $e$ is $\tx'$, and if the other holds, then we consider the source of $e$ is $\tx'$ and the target of $e$ is $\tx$. Set
\[ \vec{E}(\tG(\eta)) = \{ (\tx, \tx') \in E(\tG(\eta)) \; | \; \textrm{the source of the edge is $\tx$ and the target of the edge is $\tx'$}\}\]
This direction on $\tG(\eta)$ is $\Gamma$-equivariant, i.e., for every edge $(\tx,\tx') \in \vec{E}(\tG(\eta))$, $(\gamma \tx, \gamma \ty) \in \vec{E}(\tG(\eta))$. Therefore, the direction on $\tG(\eta)$ induces the direction on $G(\eta)$. We define the set $\vec{E}(G(\eta))$ in the same way.

\begin{lemma} \label{lemma weak}
Let $e = (\tx, \tx') \in E(\tG(\eta))$. Then the following are equivalent:
\begin{enumerate}
\item $(\tx, \tx') \in \vec{E}(\tG(\eta))$.

\item There are $\ty \in \eta(\tx)$ and $\ty' \in \eta(\tx')$ such that $d_{\tH}(\tf(\tx), \ty) = d_{\tH}(\tf(\tx'), \ty') = 2n$ and the path joining $\tf(\tx')$ to $\ty'$ contains $\ty$.
\end{enumerate}
\end{lemma}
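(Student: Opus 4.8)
The plan is to derive Lemma~\ref{lemma weak} directly from Proposition~\ref{proposition direction}: the lemma is a mild reformulation of the dichotomy proved there, in which the conclusions that $\eta(\tx)$ is a singleton and that \emph{every} long element of $\eta(\tx')$ has the path-containment property are weakened to the existence of a \emph{single} witnessing pair $\ty \in \eta(\tx)$, $\ty' \in \eta(\tx')$.

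First I would dispose of the implication (1) $\Rightarrow$ (2), which is a direct unpacking of definitions. If $(\tx, \tx') \in \vec{E}(\tG(\eta))$ then, by the definition of the orientation on $\tG(\eta)$, condition (1) of Proposition~\ref{proposition direction} holds; in particular $\eta(\tx) = \{\ty\}$ for a single vertex $\ty$ of $\tH$. Since $\tx, \tx' \in V(\tG(\eta))$ we have $u_\eta(\tx) = u_\eta(\tx') = 2n$, so $d_{\tH}(\tf(\tx), \ty) = 2n$ and there is some $\ty' \in \eta(\tx')$ with $d_{\tH}(\tf(\tx'), \ty') = 2n$; for this $\ty'$, condition (1) of Proposition~\ref{proposition direction} asserts precisely that the path from $\tf(\tx')$ to $\ty'$ contains $\ty$, which is (2).

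For (2) $\Rightarrow$ (1) I would argue by contradiction. Suppose $\ty \in \eta(\tx)$ and $\ty' \in \eta(\tx')$ witness (2) but $(\tx, \tx') \notin \vec{E}(\tG(\eta))$. Since exactly one of conditions (1), (2) of Proposition~\ref{proposition direction} holds, condition (2) must hold; applied to $\ty$ it shows that the path $P$ from $\tf(\tx)$ to $\ty$ contains $\ty'$. Combining this with the hypothesis that the path $P'$ from $\tf(\tx')$ to $\ty'$ contains $\ty$, I would extract a contradiction from the tree geometry of $\tH$ as follows. Because $\eta$ is a multi-homomorphism and $(\tx, \tx') \in E(\tG)$ we have $\ty \sim_{\tH} \ty'$, hence (since $\tH$ is simple) $\ty \ne \ty'$ and $d_{\tH}(\ty, \ty') = 1$, and likewise $d_{\tH}(\tf(\tx), \tf(\tx')) = 1$. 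Reading off distances along $P'$ and $P$ then gives $d_{\tH}(\tf(\tx'), \ty) = d_{\tH}(\tf(\tx), \ty') = 2n - 1$, and the additivity
\[ d_{\tH}(\tf(\tx), \tf(\tx')) + d_{\tH}(\tf(\tx'), \ty) = 2n = d_{\tH}(\tf(\tx), \ty) \]
shows that $\tf(\tx')$ also lies on $P$, at distance $1$ from $\tf(\tx)$, while $\ty'$ lies on $P$ at distance $2n - 1$ from $\tf(\tx)$. Since $d_{\tH}(\tf(\tx'),\ty') = 2n > 0$ these are distinct vertices of $P$, and the subpath of $P$ joining them has length $2n - 2$; as $\tH$ is a tree this subpath is the unique path between $\tf(\tx')$ and $\ty'$, so $d_{\tH}(\tf(\tx'), \ty') = 2n - 2$, contradicting $d_{\tH}(\tf(\tx'), \ty') = 2n$.

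I do not expect a genuine obstacle: once Proposition~\ref{proposition direction} is in hand the argument is elementary, and the only delicate point is the distance bookkeeping in $\tH$ of the last paragraph---in particular correctly locating $\tf(\tx')$ and $\ty'$ on $P$---which rests solely on the two standard facts that distinct adjacent vertices of a tree are at distance one and that a subpath of a path in a tree is the (unique, hence shortest) path between its endpoints.
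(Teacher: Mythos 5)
Your proof is correct and follows essentially the same route as the paper's: after disposing of $(1)\Rightarrow(2)$ by unpacking the definition of the orientation, both arguments establish $(2)\Rightarrow(1)$ by showing that condition (2) of Proposition~\ref{proposition direction} cannot hold, using the tree structure of $\tH$. The only presentational difference is that the paper directly constructs the path $Q = \tf(\tx)$-$(P\setminus\ty')$ from $\tf(\tx)$ to $\ty$ and observes it avoids $\ty'$, whereas you argue by contradiction with explicit distance bookkeeping showing $d_{\tH}(\tf(\tx'),\ty')=2n-2\neq 2n$; these are two renderings of the same tree-geometry observation.
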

\begin{proof}
By the definition of the direction of edges, $(1)$ clearly implies $(2)$. Next, suppose (2). It suffices to show that (2) of Proposition~\ref{proposition direction} does not hold. By (2) of this lemma, the path $P$ in $\tH$ joining $\tf(\tx')$ to $\ty'$ contains $\ty$. Then $\ty$ is a previous point of $\ty'$ in $P$. Since $d_{\tH}(\tf(\tx), \ty) = 2n$, $\tf(\tx)$ is not contained in $P$, and $Q = \tf(\tx)$-$P \setminus \ty'$ is the path joining $\tf(\tx)$ to $\ty$. Then $Q$ does not contain $\ty'$ and hence (2) in Proposition~\ref{proposition direction} does not hold. This completes the proof.
\end{proof}

\begin{definition}
A vertex $\tx$ of $\tG(\eta)$ is a \emph{sink} if there is no edge of $\tG(\eta)$ whose source is $\tx$. Similarly, a vertex $x$ of $G(\eta)$ is a \emph{sink} if there is no edge of $G(\eta)$ whose source is $x$.
\end{definition}

\begin{lemma} \label{lemma direction comparison}
Let $\eta, \eta' \in X_n$ such that $\eta \le \eta'$. Then the following hold:
\begin{enumerate}
\item The directed graph $\tG(\eta)$ is a directed subgraph of $\tG(\eta')$.

\item Suppose that $e \in \vec{E}(\tG(\eta'))$ and the target of $e$ is not a sink of $\eta'$. Then $e \in \vec{E}(\tG(\eta))$.
\end{enumerate}
\end{lemma}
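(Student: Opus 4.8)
The plan is to prove the two parts in turn, each time reducing to the combinatorial characterization of oriented edges in Lemma~\ref{lemma weak}, combined with the single observation that enlarging $\eta$ to $\eta'$ only enlarges each set $\eta(\tx)$.

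For part (1), I would first observe that $\eta\le\eta'$ gives $u_\eta(\tx)\le u_{\eta'}(\tx)$ for every $\tx\in V(\tG)$, and since $\eta,\eta'\in X_n$ we have $u_{\eta'}(\tx)\le 2n$; hence $u_\eta(\tx)=2n$ forces $u_{\eta'}(\tx)=2n$, so $u_\eta^{-1}(2n)\subseteq u_{\eta'}^{-1}(2n)$. As $\tG(\eta)$ and $\tG(\eta')$ are both the full (induced) subgraphs of $\tG$ on these vertex sets, $\tG(\eta)$ is a subgraph of $\tG(\eta')$. It then remains to check that orientations agree: given $(\tx,\tx')\in\vec E(\tG(\eta))$, apply Lemma~\ref{lemma weak} to get $\ty\in\eta(\tx)$, $\ty'\in\eta(\tx')$ with $d_{\tH}(\tf(\tx),\ty)=d_{\tH}(\tf(\tx'),\ty')=2n$ and the path from $\tf(\tx')$ to $\ty'$ containing $\ty$; since $\eta(\tx)\subseteq\eta'(\tx)$ and $\eta(\tx')\subseteq\eta'(\tx')$, the very same $\ty,\ty'$ witness condition (2) of Lemma~\ref{lemma weak} for $\eta'$, so $(\tx,\tx')\in\vec E(\tG(\eta'))$.

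For part (2), let $e=(\tx,\tx')\in\vec E(\tG(\eta'))$ with target $\tx'$ not a sink of $\tG(\eta')$. Because $\tx'$ is not a sink, there is an edge of $\tG(\eta')$ with source $\tx'$, and its defining property via condition (1) of Proposition~\ref{proposition direction} for $\eta'$ is that $\eta'(\tx')$ is a single vertex $\ty'$; applying condition (1) of Proposition~\ref{proposition direction} to $e$ itself gives likewise that $\eta'(\tx)$ is a single vertex $\ty$. Since $\eta\le\eta'$ and a multi-homomorphism takes nonempty values, this forces $\eta(\tx)=\{\ty\}$ and $\eta(\tx')=\{\ty'\}$; moreover $\tx,\tx'\in V(\tG(\eta'))$ gives $u_{\eta'}(\tx)=u_{\eta'}(\tx')=2n$, whence $d_{\tH}(\tf(\tx),\ty)=d_{\tH}(\tf(\tx'),\ty')=2n$, so $u_\eta(\tx)=u_\eta(\tx')=2n$ and therefore $\tx,\tx'\in V(\tG(\eta))$ and $e\in E(\tG(\eta))$. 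Finally, condition (1) of Proposition~\ref{proposition direction} for $e$ with respect to $\eta'$ also tells us that the path from $\tf(\tx')$ to $\ty'$ contains $\ty$, so the pair $\ty\in\eta(\tx)$, $\ty'\in\eta(\tx')$ verifies condition (2) of Lemma~\ref{lemma weak}, giving $e\in\vec E(\tG(\eta))$.

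I do not anticipate a real obstacle: both parts are essentially bookkeeping around Lemma~\ref{lemma weak} and Proposition~\ref{proposition direction}. The one place that needs care is in part (2), where it is essential to use that $\tx'$ is \emph{not} a sink in order to conclude that $\eta'(\tx')$ is a singleton; without that hypothesis, $\eta'(\tx')$ need only contain one vertex at distance $2n$ from $\tf(\tx')$ and might also contain strictly closer vertices, which would break the deduction that $u_\eta(\tx')=2n$. This is exactly the asymmetry that makes the conclusion in (2) one-directional (source versus target).
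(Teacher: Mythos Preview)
Your proof is correct and follows essentially the same approach as the paper's: both parts reduce to Lemma~\ref{lemma weak} after observing for (1) that $u_\eta\le u_{\eta'}$ gives the vertex-set containment, and for (2) that the non-sink hypothesis on $\tx'$ together with $(\tx,\tx')\in\vec E(\tG(\eta'))$ forces $\eta'(\tx)$ and $\eta'(\tx')$ to be singletons, whence $\eta$ and $\eta'$ agree there. Your write-up is in fact more explicit than the paper's in verifying that $e\in E(\tG(\eta))$ before concluding the direction agrees.
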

\begin{proof}
We first show (1). Then we have $V(\tG(\eta)) = u_{\eta}^{-1}(2n) \subset u_{\eta'}^{-1}(2n) = V(\tG(\eta'))$. To complete the proof of (1), it suffices to show that $e \in \vec{E}(\tG(\eta))$ implies $\vec{E}(\tG(\eta))$, i.e., $e$ has the same direction in $\tG(\eta)$ and $\tG(\eta')$. This clearly follows from Lemma~\ref{lemma weak}.

Next we show (2). Let $e = (\tx,\tx') \in \vec{E}(\tG(\eta'))$. By the assumption, there is an edge of $\tG(\eta')$ whose source is $\tx'$. This means that $\eta'(\tx)$ and $\eta'(\tx')$ are one point sets (see Proposition~\ref{proposition direction}). Thus we have $\eta(\tx) = \eta'(\tx)$ and $\eta(\tx') = \eta'(\tx')$, and $e$ has the same direction as that in $\tG(\eta)$.
\end{proof}

\begin{lemma} \label{lemma no cycle}
The directed graph $G(\eta)$ has no directed cycle.
\end{lemma}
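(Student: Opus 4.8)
\emph{My plan.} The statement fails for a general $\Gamma$-equivariant multi-homomorphism (e.g.\ for a ``constant shift'' on a line), so the argument must use that $\eta$ lies in the component $\Hom^\Gamma(\tG,\tH)_{\tf}$. I would argue by contradiction: assume $G(\eta)$ has a directed cycle and, after lifting to $\tG$, produce a configuration incompatible with $\eta\simeq_\times\tf$.

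First I would lift the cycle. Let $(x_0,x_1,\dots,x_k=x_0)$ be a directed cycle of $G(\eta)$. Since $p_G\colon\tG\to G$ is a covering map and the direction on $G(\eta)$ is the one induced from the $\Gamma$-equivariant direction on $\tG(\eta)$, this lifts to a directed walk $(\tx_0,\dots,\tx_k)$ in $\tG(\eta)$ with $\tx_k=g\tx_0$ for a unique $g\in\Gamma$; applying powers of $g$ extends it to a bi-infinite directed path $(\tx_i)_{i\in\ZZ}$ in $\tG(\eta)$ with $\tx_{i+k}=g\tx_i$. Each $\tx_i$ is the source of the edge $\tx_i\to\tx_{i+1}$, so by Proposition~\ref{proposition direction}(1) the set $\eta(\tx_i)$ is a singleton $\{\ty_i\}$, and $\tx_i\in V(\tG(\eta))$ gives $d_{\tH}(\tf(\tx_i),\ty_i)=2n$.

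Next I would extract a bi-infinite geodesic of $\tH$. Write $a_i:=\tf(\tx_i)$. By Lemma~\ref{lemma weak} the (unique) path of $\tH$ from $\tf(\tx_{i+1})$ to $\ty_{i+1}$ contains $\ty_i$; since $\tx_i\sim\tx_{i+1}$ and $\eta$ is a multi-homomorphism, $\ty_i\sim_{\tH}\ty_{i+1}$, so in the tree $\tH$ the vertex $\ty_i$ is the penultimate vertex of that path and $d_{\tH}(\tf(\tx_{i+1}),\ty_i)=2n-1$. Combined with $d_{\tH}(\tf(\tx_i),\ty_i)=2n$ and $\tf(\tx_i)\sim\tf(\tx_{i+1})$, this forces $\tf(\tx_{i+1})$ to be the neighbour of $a_i$ on the path from $\tf(\tx_i)$ to $\ty_i$, and $\ty_{i+1}$ to extend the path from $\tf(\tx_{i+1})$ to $\ty_i$ by one vertex beyond $\ty_i$ without backtracking. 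An easy induction then produces a bi-infinite geodesic $(a_i)_{i\in\ZZ}$ of $\tH$ with $\tf(\tx_i)=a_i$ and $\eta(\tx_i)=\{\ty_i\}=\{a_{i+2n}\}$ for all $i$; in particular $\tf$ maps the path $(\tx_i)$ isometrically onto this line, and $\Gamma$-equivariance of $\tf$ and $\eta$ forces the deck transformation $f_*(g)$ of $\tH$ to translate $(a_i)$ by $k\ge 1$. (Note that if the lifted cycle had closed up, $\tx_k=\tx_0$, we would get $a_k=a_0$ on a geodesic line, a contradiction; so already $\tG(\eta)$ has no directed cycle.)

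Finally — and this is the step I expect to be the main obstacle — I would derive a contradiction from the identity $\eta(\tx_i)=\{\tf(\tx_{i+2n})\}$ together with $\eta\in\Hom^\Gamma(\tG,\tH)_{\tf}$: this exhibits $\eta$ as a genuine ``$2n$-shift'' of $\tf$ along a bi-infinite directed path, and I claim no such $\eta$ can be $\times$-homotopic to $\tf$. One way to make this precise is to first replace $\eta$ by a $\Gamma$-equivariant graph homomorphism $\eta'\le\eta$ (chosen $\Gamma$-equivariantly); then $\eta'$ still lies in the component of $\tf$, $\eta'(\tx_i)=a_{i+2n}$, and $G(\eta')$ still contains the directed cycle $x_0\to\cdots\to x_{k-1}\to x_0$ by Lemma~\ref{lemma direction comparison}. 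Pushing $\eta'$ and $\tf$ down along the covering $\pi$ of Proposition~\ref{proposition Hom covering} yields $\times$-homotopic graph homomorphisms $\bar\eta',f\colon G\to H$ with $\bar\eta'(x_j)=f(x_{(j+2n)\bmod k})$ on the cycle, and one must show this is incompatible with $f\simeq_\times\bar\eta'$; the essential point is that being pinned to $\tf$ leaves no room for the winding a directed cycle forces. Closing this incompatibility is the crux of the lemma; everything above it is tree-geometry bookkeeping.
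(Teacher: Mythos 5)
Your proposal is incomplete, and you say so yourself: ``Closing this incompatibility is the crux of the lemma.'' The tree-geometry bookkeeping (lifting the cycle, showing each $\eta(\tx_i)$ is a singleton, building the bi-infinite geodesic $(a_i)$ with $\ty_i = a_{i+2n}$) is all correct and matches the first half of the paper's argument in spirit, but it is also largely superfluous: the paper never needs the geodesic, and the plan of pushing $\eta$ and $\tf$ down to $\Hom(G,H)$ and then ruling out the ``$2n$-shift on a cycle'' is harder than the problem you started with.

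The idea you are missing is that, after you know each $\eta(\tx_i)$ is a singleton and these singletons form a directed cycle (extendable both ways in $\tG(\eta)$), these values are \emph{rigid under a single step of the poset order}, and hence constant on the whole component. Concretely, take any $\eta'\in\Hom^\Gamma(\tG,\tH)$ comparable with $\eta$. If $\eta'\le\eta$, then $\emptyset\ne\eta'(\tx_i)\subset\eta(\tx_i)$ forces $\eta'(\tx_i)=\eta(\tx_i)$, and by Lemma~\ref{lemma weak} the directed cycle structure persists in $\tG(\eta')$. If $\eta\le\eta'$, Lemma~\ref{lemma direction comparison}(1) shows $\tG(\eta)$ is a directed subgraph of $\tG(\eta')$, so every $\tx_i$ is still the source of a directed edge and Proposition~\ref{proposition direction} makes $\eta'(\tx_i)$ a singleton containing $\eta(\tx_i)$, hence equal to it. Iterating along any zig-zag chain $\eta=\eta_0\le\eta_1\ge\eta_2\le\cdots$ in the component shows $\eta''(\tx_i)=\eta(\tx_i)$ for \emph{every} $\eta''\in\Hom^\Gamma(\tG,\tH)_{\tf}$. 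Taking $\eta''=\tf$ gives $\tf(\tx_i)=\eta(\tx_i)$, contradicting $d_{\tH}(\tf(\tx_i),\eta(\tx_i))=2n>0$. No descent to $\Hom(G,H)$, no choice of $\Gamma$-equivariant sub-homomorphism, and no geodesic analysis are required once this rigidity is observed. Your ``constant shift on a line'' intuition correctly identifies where the component hypothesis enters, but the realization that singleton cycle values are frozen under both $\le$ and $\ge$ is exactly the one step your plan lacks.
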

\begin{proof}
Suppose that $G(\eta)$ has a directed cycle $\gamma = (x_0, \ldots, x_m)$ with $x_0 = x_m$, and let $\tilde{\gamma} = (\tx_0, \ldots, \tx_m)$ be a lift of $\gamma$ on $\tG$. Then there is $g \in \Gamma$ such that $\tx_m = g \tx_0$. Then we have a directed walk
\begin{align} \label{align path}
g^{-1} \tx_{m-1} \to \tx_0 \to \tx_1 \to \cdots \to \tx_m \to g \tx_1
\end{align}
in $\tG$. Proposition~\ref{proposition direction} implies that $\eta(\tx_0), \cdots, \eta(\tx_m)$ are one point sets. Let $\eta' \in \Hom^\Gamma(\tG, \tH)$. If $\eta' \le \eta$, then $\eta'(\tx_i) = \eta(\tx_i)$ for every $i$. If $\eta \le \eta'$, then Lemma~\ref{lemma direction comparison} implies that $\tG(\eta')$ contains the above directed path \eqref{align path}. Hence $\eta'(\tx_0), \ldots, \eta'(\tx_m)$ are also one point sets and hence we have $\eta(\tx_i) = \eta'(\tx_i)$. This means that for every $\eta'' \in \Hom^\Gamma(\tG, \tH)_{\tf}$, $\eta''(\tx_i) = \eta(\tx_i)$. This is a contradiction since $\tf \in \Hom^\Gamma(\tG, \tH)_{\tf}$ and $d_{\tH}(\tf(\tx_i), \eta(\tx_i)) = 2n > 0$.
\end{proof}

Since $G$ is a finite graph, Lemma~\ref{lemma no cycle} immediately deduces the following corollary:

\begin{corollary} \label{corollary sink}
The directed graph $G(\eta)$ has a sink, and so does $\tG(\eta)$.
\end{corollary}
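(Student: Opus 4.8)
The plan is to reduce the statement to the elementary fact that a finite directed graph with no directed cycle has a sink, using Lemma~\ref{lemma no cycle}, and then to transport a sink of $G(\eta)$ to one of $\tG(\eta)$ by lifting along $p_G$.

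First I would treat $G(\eta)$. Since $G$ is finite and $G(\eta)$ is an induced directed subgraph of $G$, the graph $G(\eta)$ is finite, and by Lemma~\ref{lemma no cycle} it contains no directed cycle. In the case that concerns us, $V(G(\eta)) = \bar{u}_\eta^{-1}(2n) \ne \emptyset$ (otherwise $u_\eta < 2n$ everywhere, i.e.\ $\eta \in X_{n-1}$, and there is nothing to prove). Starting from any vertex and repeatedly following an outgoing edge produces a directed walk in $G(\eta)$; since there is no directed cycle this walk cannot revisit a vertex, so by finiteness it terminates, and its final vertex is by construction a sink of $G(\eta)$.

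Next I would produce a sink of $\tG(\eta)$ from a sink $x$ of $G(\eta)$. Choose $\tx \in V(\tG)$ with $p_G(\tx) = x$; since $u_\eta$ descends to $\bar{u}_\eta$ and $\bar{u}_\eta(x) = 2n$, we have $u_\eta(\tx) = 2n$, hence $\tx \in V(\tG(\eta))$. Suppose $\tx$ were not a sink, so that $(\tx, \tx') \in \vec{E}(\tG(\eta))$ for some $\tx'$. Then $(\tx, \tx') \in E(\tG)$ with both endpoints in $u_\eta^{-1}(2n)$, so $(p_G(\tx), p_G(\tx')) = (x, p_G(\tx'))$ is an edge of $G(\eta)$; and because the direction on $\tG(\eta)$ is $\Gamma$-equivariant and, by construction, induces the direction on $G(\eta)$, the source of this edge of $G(\eta)$ is $x$, contradicting that $x$ is a sink. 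Therefore $\tx$ is a sink of $\tG(\eta)$.

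This argument presents no genuine obstacle; Lemma~\ref{lemma no cycle} does the real work. The only step requiring attention is checking that a lift of a sink of $G(\eta)$ is again a sink, which amounts to the compatibility of the directions under $p_G$ — and that is exactly how the direction on $G(\eta)$ was defined, so it is immediate.
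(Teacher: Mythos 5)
Your argument is correct and is precisely the implicit one the paper relies on: the corollary is stated without proof immediately after Lemma~\ref{lemma no cycle}, the intended reading being that a finite directed graph with no directed cycle has a sink, and that $\Gamma$-equivariance of the direction lets any sink of $G(\eta)$ lift to a sink of $\tG(\eta)$. Your spelled-out version, including the observation that lifting a sink preserves the sink property because the direction on $G(\eta)$ is induced from that on $\tG(\eta)$, matches this exactly.
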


\begin{lemma} \label{lemma not target}
Let $e = ( \tx, \tx' )\in E(\tG)$ and suppose $u_\eta(\tx) = 2n$. Let $\ty \in \eta(\tx)$ such that $d_{\tH}(\tf(\tx), \ty) = 2n$. Suppose that $\tx' \not\in V(\tG(\eta))$ or $(\tx', \tx) \in \vec{E}(\tG(\eta))$.
%the direction of $e$ in $\tG(\eta)$ is $\tx' \to \tx$.
Then $\eta(\tx')$ consists of one point $\ty'$ and $\ty'$ is contained in the path $P$ joining $\tf(\tx)$ to $\ty$. Moreover, $\ty'$ is the previous point of the terminal point $\ty$ of $P$.
\end{lemma}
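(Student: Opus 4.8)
The plan is to split on the disjunctive hypothesis, which I read as ``$u_\eta(\tx') < 2n$, or $e$ is an edge of $\tG(\eta)$ whose direction is $\tx' \to \tx$'' — the second occurrence of $u_\eta(\tx)$ in the statement should be $u_\eta(\tx')$, since $u_\eta(\tx) = 2n$ is already assumed, and one only speaks of the ``direction of $e$ in $\tG(\eta)$'' when $e \in E(\tG(\eta))$, i.e.\ when $u_\eta(\tx') = 2n$ as well. Throughout I will use three elementary facts: $\tf$ is a graph homomorphism, so $\tf(\tx) \sim_{\tH} \tf(\tx')$; $\eta$ is a multi-homomorphism and $(\tx,\tx') \in E(\tG)$, so every vertex of $\eta(\tx')$ is adjacent in $\tH$ to the given $\ty \in \eta(\tx)$; and $\eta \in X_n$, so $u_\eta(\tz) \le 2n$ for every vertex $\tz$ of $\tG$. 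Writing $P = (p_0, \ldots, p_{2n})$ with $p_0 = \tf(\tx)$ and $p_{2n} = \ty$, I note that because $\tH$ is a tree each initial segment $(p_0, \ldots, p_i)$ is the unique path from $\tf(\tx)$ to $p_i$, so $d_{\tH}(\tf(\tx), p_i) = i$; consequently $p_{2n-1}$ is the only neighbour of $\ty$ lying on $P$, and every other neighbour of $\ty$ is at distance $2n+1$ from $\tf(\tx)$.

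First I would dispose of the case where $e$ is directed $\tx' \to \tx$ in $\tG(\eta)$. By the definition of the direction this means condition~(2) of Proposition~\ref{proposition direction} holds, so $\eta(\tx')$ is a singleton $\{\ty'\}$, and since our chosen $\ty$ satisfies $d_{\tH}(\tf(\tx), \ty) = 2n$, the path $P$ contains $\ty'$. As $\ty' \in \eta(\tx')$ and $\ty \in \eta(\tx)$ are adjacent and $\ty'$ lies on $P$, necessarily $\ty' = p_{2n-1}$, the vertex immediately preceding $\ty$ on $P$; this is exactly what the lemma asserts.

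Next, the case $u_\eta(\tx') < 2n$, which is the one requiring the actual argument. Take any $\ty'' \in \eta(\tx')$; then $\ty''$ is a neighbour of $\ty$, so either $\ty'' = p_{2n-1}$ or $d_{\tH}(\tf(\tx), \ty'') = 2n+1$. Suppose the latter. Since $\tH$ is bipartite and $\tf(\tx) \sim_{\tH} \tf(\tx')$, the two distances $d_{\tH}(\tf(\tx), \ty'')$ and $d_{\tH}(\tf(\tx'), \ty'')$ differ by exactly $1$, so $d_{\tH}(\tf(\tx'), \ty'') \in \{2n,\, 2n+2\}$. But $\ty'' \in \eta(\tx')$, so by definition $d_{\tH}(\tf(\tx'), \ty'') \le u_\eta(\tx') < 2n$, a contradiction. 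Hence $\ty'' = p_{2n-1}$ for every $\ty'' \in \eta(\tx')$, and since $\eta(\tx')$ is nonempty we get $\eta(\tx') = \{p_{2n-1}\}$, with $p_{2n-1}$ lying on $P$ and immediately preceding $\ty$, as required.

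I do not expect a genuine obstacle here: once the hypothesis is parsed correctly and split into these two cases, the proof is short. The two points calling for a little care are the tree geometry — using uniqueness of paths to see that the neighbours of $\ty$ are precisely $p_{2n-1}$ together with vertices at distance $2n+1$ from $\tf(\tx)$, and that $d_{\tH}(\tf(\tx), p_i) = i$ — and the bipartite ``$\pm 1$'' step that pins down $d_{\tH}(\tf(\tx'), \ty'')$ in the second case. I would also explicitly flag the typo in the statement so that $u_\eta(\tx) = 2n$ and ``$u_\eta(\tx') < 2n$'' are not read as contradictory.
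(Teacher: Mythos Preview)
Your proof is correct and rests on the same distance-in-a-tree computation as the paper's: a neighbour of $\ty$ not on $P$ lies at distance $2n+1$ from $\tf(\tx)$, hence at distance $\ge 2n$ from $\tf(\tx')$. The only organizational difference is that you split on the disjunctive hypothesis and dispatch the first case by citing Proposition~\ref{proposition direction}, whereas the paper argues both cases at once via the contrapositive --- showing that if some $\ty' \in \eta(\tx')$ lies off $P$ then $u_\eta(\tx') = 2n$ and, via Lemma~\ref{lemma weak}, the edge is directed $\tx \to \tx'$.
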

\begin{proof}
Let $\ty' \in \eta(\tx')$. Suppose that $\ty'$ is not contained in $P$. Then $P$-$\ty'$ is the path with length $2n+1$ joining $\tf(\tx)$ to $\ty'$. Since $\tf(\tx')$ is adjacent to $\tf(\tx)$ and $\eta \in X_n$, we have $d_{\tH}(\tf(\tx'), \ty') = 2n$ and $\tx' \in V(\tG(\eta))$. Since $P$ does not contain $\ty'$, we have $(\tx, \tx') \in \vec{E}(\tG(\eta))$. This contradicts the hypothesis.

Hence $\ty'$ is contained in $P$. Since $\ty$ and $\ty'$ are adjacent, $\ty'$ is the previous point of $\ty$. Since such a point is unique, the set $\eta(\tx')$ consists of one point $\ty'$.
\end{proof}

\subsection{Subposets $X_{n,m}$ and $X_{n,m,i}$}

We are now ready to introduce the filtrations $X_{n,m}$ and $X_{n,m,i}$ of $X_n$. Let $m$ be an integer at least $-1$. Define the induced subposet $X_{n,m}$ of $X_n$ as follows: $\eta \in X_{n,m}$ if and only if $G(\eta)$ has no directed path having the length greater than $m$. Here we consider that a vertex is a directed path with length $0$. Then we have $X_{n-1} = X_{n,-1}$ and $X_{n,m-1} \subset X_{n,m}$ for $m \ge 0$. Since $G$ is a finite graph, we have $X_n = X_{n,m}$ for sufficiently large $m$.

Next we define the subposet $X_{n,m,i}$. Let $P_1, \ldots, P_l$ be the directed paths with length $m$ in $G$. For $i \ge 0$, let $X_{n,m,i}$ be the induced subposet of $X_{n,m}$ consisting of multi-homomorphisms $\eta \in X_{n,m}$ such that $G(\eta)$ does not contain $P_{i+1}, \ldots, P_l$. Then $X_{n,m,0} = X_{n,m-1}$ and $X_{n,m,i} = X_{n,m}$ for every $i \ge l$. These are the desired conditions described in Subsection~\ref{subsection outline}.

\subsection{Deformation retracts}

Thus, to complete the proof of Theorem~\ref{theorem contractible 1}, it suffices to show the following (see Lemma~\ref{lemma homotopy colimit}):

\begin{proposition} \label{proposition deformation retract}
For $i \ge 1$, the poset $X_{n,m,i-1}$ is a deformation retract of $X_{n,m,i}$
\end{proposition}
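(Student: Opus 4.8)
The plan is to build a descending closure operator on $X_{n,m,i}$ whose image is $X_{n,m,i-1}$ and then invoke Theorem~\ref{theorem closure operator}. The path $P_i$ has length $m$ in $G$; write $P_i = (x_0 \to x_1 \to \cdots \to x_m)$ as a directed path in $G$, and fix a lift $\tP_i = (\tx_0 \to \cdots \to \tx_m)$ in $\tG$. Given $\eta \in X_{n,m,i}$ containing $P_i$ in $G(\eta)$, the key point is that $\eta(\tx_m)$ (and each $\eta(\gamma \tx_m)$ in the $\Gamma$-orbit) is a one-point set $\{\ty_m\}$ with $d_{\tH}(\tf(\tx_m),\ty_m) = 2n$, since $\tx_m$ is the target of an edge of $\vec E(\tG(\eta))$. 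The operator $c$ should remove the "tip" of this one-point value: set $c(\eta)(\gamma\tx_m) = \{\ty'_m\}$ where $\ty'_m$ is the previous point of $\ty_m$ on the path from $\tf(\tx_m)$ to $\ty_m$ (Lemma~\ref{lemma not target} guarantees this $\ty'_m$ is forced), and $c(\eta)(\tz) = \eta(\tz)$ for all $\tz$ not in the $\Gamma$-orbit of $\tx_m$. One must check: (a) $c(\eta)$ is still a $\Gamma$-equivariant multi-homomorphism; (b) $c(\eta) \le \eta$; (c) $c(\eta) \in X_{n,m,i-1}$, i.e. shortening at $\tx_m$ destroys $P_i$ in $G(\eta)$ and creates no new length-$m$ directed path among $P_{i}, \ldots, P_l$; (d) $c$ is monotone; and (e) $c^2 = c$.

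For (a), equivariance is immediate from the $\Gamma$-equivariance of the direction and the fact that $\ty'_m$ depends only on the tree-geodesic data, which is $\Gamma$-equivariant; that $c(\eta)$ is a multi-homomorphism follows because for a neighbor $\tx$ of $\tx_m$ in $\tG$ one has, by Lemma~\ref{lemma not target} applied in the other direction, good control of $\eta(\tx)$ — in fact when $(\tx_{m-1},\tx_m)\in\vec E(\tG(\eta))$ the vertex $\tf(\tx_{m-1})$ lies on the geodesic, so $\ty'_m$ is still adjacent to everything in $\eta(\tx_{m-1})$, and for any other neighbor $\tx$ the set $\eta(\tx)$ is also a singleton geodesic-compatible set by the same lemma. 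Condition (b) is clear since $\ty'_m \ne \ty_m$ only drops an element. For (c), after the modification $u_{c(\eta)}(\tx_m) = 2n - 2 < 2n$, so $\tx_m \notin V(G(c(\eta)))$ and thus $P_i \not\subset G(c(\eta))$; Lemma~\ref{lemma direction comparison}(1) (read downward) shows $G(c(\eta))$ is a directed subgraph of $G(\eta)$, so no new length-$m$ path appears and none of $P_{i+1},\ldots,P_l$ is created; hence $c(\eta)\in X_{n,m,i-1}$. Also $c$ fixes $X_{n,m,i-1}$ pointwise since there $\tx_m$ is not even a sink — wait, rather, there $P_i$ is not present, so $\tx_m$ need not be a target; one checks $c$ is the identity on any $\eta$ not containing $P_i$, which covers $X_{n,m,i-1}$, giving $c^2 = c$.

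Monotonicity (d) is where I expect the real work. Suppose $\eta \le \eta'$ in $X_{n,m,i}$; I want $c(\eta) \le c(\eta')$. The cases split on whether each of $\eta, \eta'$ contains $P_i$ in its graph. If both do, then both values at $\tx_m$ are the single forced vertex $\{\ty_m\}$ (the geodesic endpoint at distance $2n$ from $\tf(\tx_m)$ is unique since $\tH$ is a tree), hence equal, and $c$ shortens both to the same $\{\ty'_m\}$; away from the orbit $c$ does nothing, so $c(\eta)\le c(\eta')$ reduces to $\eta \le \eta'$. If neither contains $P_i$, then $c$ is the identity on both. The delicate case is $\eta$ not containing $P_i$ but $\eta'$ containing it (by Lemma~\ref{lemma direction comparison}, $P_i\subset G(\eta)$ forces $P_i\subset G(\eta')$, so the reverse cannot happen): here $c(\eta)=\eta$ and I must show $\eta(\gamma\tx_m) \subset c(\eta')(\gamma\tx_m)$, i.e. $\eta(\tx_m) \subset \{\ty'_m\}$. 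Since $P_i \not\subset G(\eta)$ while $\tx_{m-1}\to\tx_m$ and all earlier edges persist with their direction in $\tG(\eta)$ by Lemma~\ref{lemma direction comparison}(1), the only way $P_i$ fails in $G(\eta)$ is that $u_\eta(\tx_m) < 2n$; then Lemma~\ref{lemma not target} (applied with the edge $\tx_{m-1}\to\tx_m$, whose direction in $\tG(\eta')$ has target $\tx_m$, hence is not $\tx_m\to\tx_{m-1}$, and using $u_\eta(\tx_m)<2n$) forces $\eta(\tx_m) = \{\ty'_m\}$, exactly the shortened value. This closes monotonicity. Finally, with $c$ a descending closure operator and $c(X_{n,m,i}) = X_{n,m,i-1}$, Theorem~\ref{theorem closure operator} yields that $X_{n,m,i-1}$ is a deformation retract of $X_{n,m,i}$, completing the proof.
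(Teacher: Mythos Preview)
Your construction has a genuine gap. The claim that $\eta(\tx_m)$ is a one-point set is incorrect: Proposition~\ref{proposition direction} guarantees that the \emph{source} of a directed edge has singleton value, not the target, and since $\tx_m$ is a sink of $\tG(\eta)$ no such conclusion is available; in general $\eta(\tx_m)$ may contain several vertices. More seriously, even when $\eta(\tx_m)=\{\ty_m\}$, your vertex $\ty'_m$ (at distance $2n-1$ from $\tf(\tx_m)$) is not an element of $\eta(\tx_m)$, so $c(\eta)\le\eta$ fails and $c$ is not descending. In fact Lemma~\ref{lemma not target} gives $\eta(\tx_{m-1})=\{\ty'_m\}$ as well, so the edge $(\tx_{m-1},\tx_m)$ would force $\ty'_m\sim_{\tH}\ty'_m$, impossible in the tree $\tH$; thus $c(\eta)$ is not even a multi-homomorphism. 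The monotonicity check in the mixed case inherits the same defect: applying Lemma~\ref{lemma not target} with $\tx=\tx_{m-1}$ shows that $\eta(\tx_m)$ is the singleton at distance $2n-2$ from $\tf(\tx_m)$, not $\{\ty'_m\}$.

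The paper handles exactly this obstruction with a two-step argument. The correct replacement value at $\tx_m$ is the vertex $\tx_\eta$ at distance $2n-2$ (two steps back on the geodesic, not one), but since $\tx_\eta\notin\eta(\tx_m)$ in general one still cannot build a single descending closure operator. Instead the paper first defines an \emph{ascending} closure operator $U$ on $X_{n,m,i}$ that adjoins $\tx_\eta$ to $\eta(\tx_m)$, and then a \emph{descending} closure operator $D$ on $U(X_{n,m,i})$ that replaces the value at $\tx_m$ by the singleton $\{\tx_\eta\}$. After verifying that $U$ and $D$ are monotone and that $DU(X_{n,m,i})=X_{n,m,i-1}$, two applications of Theorem~\ref{theorem closure operator} yield the deformation retraction.
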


The rest of this section is devoted to the proof of Proposition~\ref{proposition deformation retract}. We define an ascending closure operator $U \colon X_{n,m,i} \to X_{n,m,i}$ and a descending closure operator $D \colon U(X_{n,m,i}) \to U(X_{n,m,i})$ such that $DU(X_{n,m,i}) = X_{n,m,i-1}$.

First we define the map $U \colon X_{n,m,i} \to X_{n,m,i}$ as follows: Let $\eta \in X_{n,m,i}$. If $\eta \in X_{n,m,i-1}$, then we set $U(\eta) = \eta$. Suppose $\eta \not\in X_{n,m,i-1}$. Then $G(\eta)$ contains $P_i$.

Let $x$ be the terminal point of $P_i$. Let $\tx$ be a vertex of $V(\tG(\eta))$ such that $p_G(\tx) = x$. Then there is $\ty \in \eta(\tx)$ such that $d_{\tH}(\tf(\tx), \ty) = 2n$. Let $P$ be the path joining $\tf(\tx)$ to $\ty$. Let $\tx_{\eta}$ be the vertex on $P$ such that $d_{\tH}(\tf(\tx), \tx_{\eta}) = 2n-2$. Then $\tx_\eta$ does not depend on the choice of $\ty$. Indeed, let $\tx'$ be a vertex adjacent to $\tx$. Lemma~\ref{lemma not target} implies that $\eta(\tx')$ consists of one vertex $\ty'$, and $\ty'$ is the previous vertex of the terminal point $\ty$ of $P$. Then $\tx_\eta$ is the previous vertex of the terminal point of the path joining $\tf(\tx)$ to $\ty'$. Since $\eta(\tx') = \{ \ty'\}$ does not depend on the choice of $\ty \in \eta(\tx)$, $\tx_{\eta}$ does not depend on the choice of $\ty$.

Now we define the map $U(\eta) \colon V(\tG) \to \PPP(V(\tG)) \setminus \{ \emptyset \}$ as follows:
\[ U(\eta)(\tz) = \begin{cases}
\eta(\tz) \cup \{ \tz_\eta \} & (p_G(\tz) = x) \\
\eta(\tz) & (p_G(\tz) \ne x).
\end{cases}\]
Then $U(\eta)$ is clearly $\Gamma$-equivariant and $U^2 = U$. We would like to show the following:
\begin{enumerate}
\item $U(\eta)$ is a multi-homomorphism.

\item $\tG(\eta) = \tG(U(\eta))$. Similarly, $G(\eta) = G(U(\eta))$. In particular, $U(\eta) \in X_{n,m,i}$.

\item $U$ is monotone.
\end{enumerate}

First we show that $U(\eta)$ is a multi-homomorphism. Since $U(\eta)$ and $\eta$ coincide except for $p_G^{-1}(x)$, it suffices to observe edges connecting with a vertex contained in $p_G^{-1}(x)$. Let $\tx \in p_G^{-1}(x)$ and $\tx'$ a vertex adjacent to $\tx$. Let $\ty' \in \eta(\tx')$. It suffices to check that $\tx_{\eta}$ and $\ty'$ are adjacent. Let $\ty \in \eta(\tx)$ satisfying $d_{\tH}(\tf(\tx), \ty) = 2n$, and $P$ the path joining $\tf(\tx)$ to $\ty$. Then it follows from Lemma~\ref{lemma not target} that $\ty'$ is the previous point of the terminal point $\ty$ of $P$. Since $\tx_\eta$ is the vertex of $P$ with $d_{\tH}(\tf(\tx), \tx_\eta) = 2n-2$, $\tx_\eta$ and $\ty'$ are adjacent. This completes the proof of (1).

Next, we show $(2)$. It suffices to show only $\tG(\eta) = \tG(U(\eta))$. Since for every $\tz \in V(\tG)$, $U(\eta)(\tz)$ is just adding a vertex at distance $2n-2$ from $\tf(\tz)$, we have $V(\tG(\eta)) = V(\tG(U(\eta)))$ and the direction of each edge coincides (see Lemma~\ref{lemma weak}). This completes the proof of (2).

Finally, we show (3). Let $\eta, \eta' \in X_{n,m,i}$ and $\eta \le \eta'$. If $U(\eta) = \eta$, then $U(\eta) = \eta \le \eta' \le U(\eta')$. Hence we consider the case that $\eta < U(\eta)$. Then $\eta \not\in X_{n,m,i-1}$. Let $x$ be the terminal point of $P_i$ and $\tx \in p_G^{-1}(x)$. Since $U(\eta) \ne \eta$, $\eta(\tx)$ does not contain $\tx_\eta$. Since $\eta \le \eta'$, Lemma~\ref{lemma direction comparison} implies that $G(\eta')$ also contains $P_i$. If $x$ is not a sink in $G(\eta')$, then $G(\eta')$ contains a directed path whose length is greater than $m$. This contradicts the assumption that $\eta' \in X_{n,m}$. Hence $x$ is a sink of $G(\eta')$ and $\tx_{\eta'} = \tx_\eta$. Hence we have
\[ U(\eta')(\tilde{x}) = \eta'(\tilde{x}) \cup \{ \tilde{x}_{\eta'}\} \supset \eta(\tilde{x}) \cup \{ \tilde{x}_{\eta} \} = U(\eta)(x).\]
This completes the proof of (3).

Next we define the map $D \colon U(X_{n,m,i}) \to U(X_{n,m,i})$. If $\eta \in X_{n,m,i-1}$, then set $D(\eta) = \eta$. Suppose $\eta \not\in X_{n,m,i-1}$. Then $G(\eta)$ contains $P_i$. Let $x$ be the terminal point of $P_i$ and let $\tx \in p_G^{-1}(x)$. Since $\eta \in U(X_{n,m,i})$ and $U^2 = U$, $\eta(\tx)$ contains $\tx_{\eta}$. Hence we define $D(\eta) \in X_{n,m,i}$ by
\[ D(\eta)(\tz) = \begin{cases}
\{ \tz_\eta\} & (p_G(\tz) = x) \\
\eta(\tz) & (p_G(\tz) \ne x).
\end{cases}\]
Since $\emptyset \ne D(\eta)(\tz) \subset \eta(\tz)$ for each $\tz \in V(\tG)$ and $\eta$ is a multi-homomorphism, $D(\eta)$ is a multi-homomorphism. It is clear that $D(\eta)$ is $\Gamma$-equivariant. Note that $D(\eta)(\tx)$ does not contain a vertex $\ty$ such that $d_{\tH}(\tf(\tx), \ty) = 2n$. This means $\tx \not\in V(\tG(D(\eta)))$. By Lemma~\ref{lemma direction comparison}, we have $D(\eta) \in X_{n,m,i-1} \subset U(X_{n,m,i})$.

We show that $D \colon U(X_{n,m,i}) \to U(X_{n,m,i})$ is monotone. Let $\eta, \eta' \in U(X_{n,m,i})$. We only need to consider the case $D(\eta') < \eta'$. Let $x$ be the terminal point of $P_i$. Suppose that $D(\eta) \le D(\eta')$ does not hold. Then $\eta \le D(\eta')$ does not hold and $\eta \le \eta'$. Hence there is $\tx \in p_G^{-1}(x)$ and $\ty \in \eta(\tx) \subset \teta'(\tx)$ such that $d_{\tH}(\tf(\tx), \ty) = 2n$. By Lemma~\ref{lemma direction comparison}, $G(\eta)$ contains $P_i$. Since $\eta \in U(X_{n,m,i})$, we have $U(\eta) = \eta$, and hence we have that $\eta(\tx)$ contains $\tx_\eta$. This means that $\ty \not\in D(\eta)(\tx)$, this is a contradiction. This completes the proof that $D$ is monotone.

We have shown that $U \colon X_{n,m,i} \to X_{n,m,i}$ and $D \colon U(X_{n,m,i}) \to U(X_{n,m,i})$ are closure operators and $DU(X_{n,m,i}) = X_{n,m,i-1}$. By Theorem~\ref{theorem closure operator}, $X_{n,m,i-1}$ is a deformation retract of $X_{n,m,i}$. This completes the proof of Proposition~\ref{proposition deformation retract}.

\section{Fundamental group of $\Hom(G,H)_f$} \label{section fundamental group}

In this section, we study the fundamental group of $\Hom(G,H)_f$. In Subsection~\ref{subsection realizable}, we establish a relationship between the fundamental group of $\Hom(G,H)_f$ and realizable walks studied in Wrochna \cite{WrochnaSIAM} (Theorem~\ref{theorem exact}). In Subsection~\ref{subsection proof of main theorem}, we complete the proof of Theorem~\ref{theorem main}, using a fact (Theorem~\ref{theorem Wrochna}) proved in \cite{WrochnaSIAM}.

\subsection{Realizable elements} \label{subsection realizable}
In his study of the homomorphism reconfiguration problem, Wrochna \cite{WrochnaSIAM} considered the group $\Pi(f,v)$ for the case that $H$ is square-free. While his work deals only with the case where $H$ is square-free, it is straightforward to generalize it in terms of the 2-homotopy classes of walks (see also \cite{TW}).

For the moment, suppose that $H$ is an arbitrary connected graph. Let $h$ be a $\times$-homotopy from $f$ to $f$ (see Subsection~\ref{subsection Hom complex}), i.e., a graph homomorphism $h \colon G \times I_n \to H$ such that $h(x,0) = h(x,n) = f(x)$. Let $v' \in N_G(v)$. Then we have a closed walk
\[ f(v) = h(v,0) \sim h(v',0) \sim h(v,1) \sim \cdots \sim h(v',n-1) \sim h(v,n) = f(v)\]
of $H$ with even length. Although this closed walk depends on the choice of $v' \in N_G(v)$, its $2$-homotopy class $\alpha$ does not. In this case, we say that \emph{$\alpha$ is realized by $h$}. We call an element of $\pi_1^2(H,w)_{ev}$ \emph{realizable} if it is realized by a $\times$-homotopy. The realizable elements form a subgroup of $\pi_1^2(H, w)$, and we denote this group by $\Pi(f,v)$.

We now provide a characterization of the realizability in terms of $\Hom^\Gamma(\tG, \tH)$. In the following theorem, recall that $\pi_1^2(H,w)$ acts on the universal $2$-covering $\tH$ (Subsection~\ref{subsection 2-covering action}).

\begin{theorem} \label{theorem coincide}
Let $\varpi$ be an element of $\pi_1^2(H, w)$. Then the following are equivalent:
\begin{enumerate}[(1)]
\item $\varpi$ is realizable, i.e., $\varpi \in \Pi(f,v)$.

\item $\varpi \tf$ belongs to $\Hom^\Gamma(\tG, \tH)_{\tf}$. Here $\varpi \tf$ is a graph homomorphism sending $\tx$ to $\varpi (\tf(\tx))$.
\end{enumerate}
\end{theorem}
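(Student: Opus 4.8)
\textbf{Proof proposal for Theorem~\ref{theorem coincide}.}

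The plan is to prove both implications by directly translating between $\times$-homotopies $h \colon G \times I_n \to H$ and paths in the poset $\Hom^\Gamma(\tG,\tH)$, using the lifting criterion (Lemma~\ref{lemma lifting}) to pass to the universal $2$-coverings and keeping careful track of the $\Gamma$-action. First I would recall that a $\times$-homotopy $h$ from $f$ to $g$ is, by Lemma~\ref{lemma x-homotopy} applied edgewise along $I_n$, the same data as a sequence of multi-homomorphisms witnessing a zig-zag $f = f_0 \le \eta_0 \ge f_1 \le \eta_1 \ge \cdots \ge f_n = g$ in $\Hom(G,H)$; equivalently, a path in the classifying space from $f$ to $g$ lying in the component $\Hom(G,H)_f$. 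The analogous statement holds $\Gamma$-equivariantly upstairs: a path in $\Hom^\Gamma(\tG,\tH)$ from $\tf$ to some $\tg$ corresponds to a $\Gamma$-equivariant $\times$-homotopy $\tH$-valued, i.e.\ a graph homomorphism $\tG \times I_n \to \tH$ equivariant for the $\Gamma$-action (with $\Gamma$ acting on $I_n$ trivially).

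For (2) $\Rightarrow$ (1): suppose $\varpi \cdot \tf \in \Hom^\Gamma(\tG,\tH)_{\tf}$. Then there is a $\Gamma$-equivariant path upstairs from $\tf$ to $\varpi\cdot\tf$, hence a $\Gamma$-equivariant $\times$-homotopy $\tilde h \colon \tG \times I_n \to \tH$ with $\tilde h(\cdot,0) = \tf$ and $\tilde h(\cdot,n) = \varpi\cdot\tf$. Composing with $p_H$ and using $\Gamma$-equivariance to descend, we get $h = p_H \circ \tilde h$ factoring through $(\tG/\Gamma) \times I_n = G \times I_n$, i.e.\ an honest $\times$-homotopy $h \colon G\times I_n \to H$ from $f$ to $f$ (note $p_H(\varpi\cdot\tf(\tx)) = p_H(\tf(\tx)) = f(x)$). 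Picking $\tilde v' \in N_{\tG}(\tv)$ and tracing the walk $\bigl(\tilde h(\tv,0), \tilde h(\tv',0), \tilde h(\tv,1), \ldots, \tilde h(\tv,n)\bigr)$ in $\tH$, its endpoints are $\tw$ and $\varpi\cdot\tw$; by Lemma~\ref{lemma description of action} the $2$-homotopy class of its image under $p_H$ is exactly $\varpi$. But that image is precisely the closed walk at $f(v)$ realized by $h$. Hence $\varpi \in \Pi(f,v)$.

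For (1) $\Rightarrow$ (2): suppose $\varpi$ is realized by some $\times$-homotopy $h \colon G\times I_n \to H$ from $f$ to $f$. The idea is to lift $h$ to a $\Gamma$-equivariant $\tilde h \colon \tG \times I_n \to \tH$ starting at $\tf$ and then read off its endpoint. Regard $\tG \times I_n$ as carrying the $\Gamma$-action on the first factor; its universal $2$-covering maps to $\tG \times I_n$, but more to the point we want to lift the composite $\tG \times I_n \xrightarrow{p_G \times \id} G \times I_n \xrightarrow{h} H$ through $p_H$, compatibly with the $\Gamma$-action. Restricting to a connected component containing $(\tv,0)$ and applying the lifting criterion (Lemma~\ref{lemma lifting}), the obstruction is that the induced map on $\pi_1^2$ land in $p_{H*}(\pi_1^2(\tH,\tw)) = 1$; since $\pi_1^2(\tH)$ is trivial this is automatic once the component is simply enough connected — more carefully, one builds $\tilde h$ vertex-by-vertex along $I_n$: having lifted $\tilde h(\cdot,k)$, the multi-homomorphism $\eta_k = f_k \cup f_{k+1}$ downstairs lifts uniquely by the poset-covering property of $\pi$ (Proposition~\ref{proposition Hom covering}) to a $\Gamma$-equivariant $\tilde\eta_k$ above $\tilde h(\cdot,k)$, and $\tilde h(\cdot,k+1)$ is the corresponding lift of $f_{k+1}$. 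By construction this gives a $\Gamma$-equivariant path in $\Hom^\Gamma(\tG,\tH)$ from $\tf$ to $\tilde h(\cdot,n)$, so $\tilde h(\cdot,n) \in \Hom^\Gamma(\tG,\tH)_{\tf}$. It remains to identify $\tilde h(\cdot,n)$ with $\varpi\cdot\tf$: both are lifts of $f$ sending $\tx \mapsto$ (something over $f(x)$); tracing the walk $\bigl(\tilde h(\tv,0),\ldots,\tilde h(\tv,n)\bigr)$ as above shows $\tilde h(\tv,n) = \varpi\cdot\tw$ (using that $\varpi$ is the class realized by $h$ together with Lemma~\ref{lemma description of action}), and since a lift of a based graph homomorphism is determined by its value at one vertex of each $\Gamma$-orbit together with equivariance — and $\tG$ is connected — we get $\tilde h(\cdot,n) = \varpi\cdot\tf$.

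The main obstacle I expect is the bookkeeping in (1) $\Rightarrow$ (2): one must produce the lift $\tilde h$ so that it is simultaneously (a) a genuine $\times$-homotopy into $\tH$, (b) $\Gamma$-equivariant, and (c) starts exactly at $\tf$ (not merely at some lift of $f$), and then verify its terminal value is $\varpi\cdot\tf$ on the nose. The cleanest route is probably to avoid lifting $h$ as a single graph homomorphism and instead use Proposition~\ref{proposition Hom covering}: lift the zig-zag $f \le \eta_0 \ge f_1 \le \cdots \ge f$ step by step through the poset covering map $\pi$, which handles (a), (b), (c) uniformly, and then invoke Lemma~\ref{lemma description of action} for the final identification. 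The subtlety worth a careful sentence is that the path upstairs need not be closed — its endpoint is $\varpi\cdot\tf$, which equals $\tf$ iff $\varpi = e$ — and that the well-definedness of "the class realized by $h$" (independence of the choice of $v' \in N_G(v)$), already noted in the text, is what makes the endpoint computation unambiguous.
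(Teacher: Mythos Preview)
Your proposal is correct and follows essentially the same route as the paper: for $(2)\Rightarrow(1)$ you descend a $\Gamma$-equivariant $\times$-homotopy and read off the realized class via Lemma~\ref{lemma description of action}, and for $(1)\Rightarrow(2)$ you lift the zig-zag $f=f_0\le\eta_0\ge f_1\le\cdots\ge f_n=f$ step by step through the poset covering $\pi$ (Proposition~\ref{proposition Hom covering}), exactly as the paper does. Your final identification $\tilde h(\cdot,n)=\varpi\cdot\tf$ via uniqueness of lifts (two lifts of $f$ through the $2$-covering $p_H$ agreeing at $\tv$ coincide on the connected graph $\tG$) is actually spelled out more carefully than in the paper, which compresses this into a single sentence; your brief detour through ``lift $h$ directly via Lemma~\ref{lemma lifting}'' is unnecessary (and, as you note, the $\pi_1^2$ of $\tG\times I_n$ is not obviously trivial), but you correctly abandon it for the poset-covering argument.
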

\begin{proof}
We first prove $(2) \Rightarrow (1)$. If (2) holds, by the same argument to show Proposition~\ref{proposition homotopy}, we have a $\times$-homotopy $\th \colon \tG \times I_n \to \tH$ from $\tf$ to $\varpi \cdot \tf$ such that for every $t \in \{ 0,1, \cdots, n\}$, the map $V(\tG) \to V(\tH)$, $\tx \mapsto \th(\tx,t)$ is $\Gamma$-equivariant. Let $\tv' \in N_{\tG}(\tv)$ and consider the walk
\[ \varphi = (\th(\tv,0), \th(\tv',0), \th(\tv,1), \th(\tv',1), \cdots \th(\tv, n))\]
in $\tG$. Then $\varphi$ joins $\tw$ to $\varpi \cdot \tw$. By Lemma~\ref{lemma description of action}, the $2$-homotopy class of $p_H \circ \varphi$ is $\varpi$.

On the other hand, $\th$ induces a $\times$-homotopy $h \colon G \times I_n \to H$ such that $h(p_G(\tx), t) = p_H(\th(\tx,t))$. Then the $2$-homotopy class realized by $h$ coincides with the $2$-homotopy class of $p_H \circ \varphi$, which is $\varpi$. This completes the proof of $(2) \Rightarrow (1)$.

Next we show $(1) \Rightarrow (2)$. Suppose that $\varpi$ is realizable. Let $h \colon G \times I_n \to H$ be a $\times$-homotopy from $f$ to $f$ such that the $2$-homotopy class realized by $h$ is $\varpi$. For $i = 0,1, \cdots, n$, define the graph homomorphism $f_i \colon G \to H$ by $f_i(x) = h(x,i)$. Then Lemma~\ref{lemma x-homotopy} implies that there is a sequence
\[ f = f_0 \le \eta_0 \ge f_1 \le \cdots \ge f_n = f\]
in $\Hom(G,H)$. By Proposition~\ref{proposition Hom covering}, there is a sequence
\[ \tf = \tf_0 \le \teta_0 \ge \tf_1 \le \cdots \ge \tf_n\]
in $\Hom^\Gamma (\tG, \tH)$ such that $\pi(\tf_i) = f_i$ and $\pi(\teta_i) = \eta_i$. Then we have a $\times$-homotopy $\th \colon \tG \times I_n \to \tH$ defined by $\th(\tx,i) = \tf_i(\tx)$. Note that the $2$-homotopy class of walks realized by $\th$ is a lift of that of $h$. Hence, by the description of the action on $\tH$ (see Subsection~\ref{subsection 2-covering action}), the $2$-homotopy class of walks realized by $\th$ joins $\tf(\tv)$ to $\varpi \tf(\tv)$. This means that $\tf_n(\tv) = \varpi \cdot \tf(\tv)$ and $p_H \circ \tf_n = p_H \circ (\varpi \tf)$. Thus, the uniqueness of the lifting criterion (Lemma~\ref{lemma lifting}) implies that $\varpi \tf = \tf_n \in \Hom^\Gamma(\tG, \tH)_{\tf}$.
\end{proof}

The following corollary is known for the case that $H$ is square-free (see \cite[Lemma~4.1]{WrochnaSIAM}).

\begin{corollary} \label{corollary commutative}
Let $\alpha \in \pi_1^2(G,v)$ and $\varpi \in \Pi(f,v)$. Then $f_*(\alpha) \varpi = \varpi f_*(\alpha)$ in $\pi_1^2(H,w)$.
\end{corollary}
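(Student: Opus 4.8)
The statement to prove is Corollary~\ref{corollary commutative}: for $\alpha \in \pi_1^2(G,v)$ and $\varpi \in \Pi(f,v)$, we have $f_*(\alpha)\varpi = \varpi f_*(\alpha)$ in $\pi_1^2(H,w)$. The plan is to deduce this from Theorem~\ref{theorem coincide}, which characterizes realizability as membership in the component $\Hom^\Gamma(\tG,\tH)_{\tf}$. The key observation is that the $\Gamma$-action on $\tH$ (via $f_*$) and the left-translation action of $\pi_1^2(H,w)$ on $\tH$ interact in a controlled way, and that $\varpi \cdot \tf$ being in the same component as $\tf$ is a statement about $\Gamma$-equivariant multi-homomorphisms, where $\Gamma$ acts through $f_*$.

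First I would unwind what $\varpi \cdot \tf \in \Hom^\Gamma(\tG,\tH)_{\tf}$ means: the map $\tx \mapsto \varpi \cdot \tf(\tx)$ must be $\Gamma$-equivariant, i.e. for every $\gamma \in \Gamma$ and $\tx \in V(\tG)$ we need $f_*(\gamma)\bigl(\varpi \cdot \tf(\tx)\bigr) = \varpi \cdot \tf(\gamma \tx) = \varpi \cdot f_*(\gamma)\tf(\tx)$, where in the last step I used that $\tf$ itself is $\Gamma$-equivariant. Since $\tf(\tx)$ ranges over a $\Gamma f_*$-orbit-connected set of vertices of $\tH$ (and in particular contains $\tw$, taking $\tx = \tv$), setting $\tx = \tv$ gives $f_*(\gamma)(\varpi \cdot \tw) = \varpi \cdot (f_*(\gamma) \tw)$ for all $\gamma \in \Gamma$. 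But the left-translation action of $\pi_1^2(H,w)$ on $\tH$ is simply transitive on the fiber $p_H^{-1}(w)$, and both $\varpi\cdot\tw$ and $f_*(\gamma)\tw$ lie in that fiber; the identity $f_*(\gamma)(\varpi\cdot\tw) = \varpi\cdot(f_*(\gamma)\tw)$, read in terms of these group actions, translates (using Lemma~\ref{lemma description of action} to recover elements of $\pi_1^2(H,w)$ from walks in $\tH$ into the fiber) exactly into $f_*(\gamma)\varpi = \varpi f_*(\gamma)$. So the plan is: (a) apply Theorem~\ref{theorem coincide} to get that $\varpi \cdot \tf$ is a $\Gamma$-equivariant multi-homomorphism in the component of $\tf$; (b) extract $\Gamma$-equivariance of $\varpi\cdot\tf$; (c) evaluate at $\tv$ and translate the resulting identity of vertices in $p_H^{-1}(w)$ back into $\pi_1^2(H,w)$ via the simply transitive action.

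The step I expect to be the main obstacle is step (c): being careful about exactly how the left $\pi_1^2(H,w)$-action on $\tH$ composes, and in particular confirming that the "evaluation at $\tv$" identity $f_*(\gamma)(\varpi\cdot\tw) = \varpi\cdot(f_*(\gamma)\tw)$ faithfully encodes the group-multiplication identity and not merely a conjugate of it. Concretely, one should recall that for $\beta, \varpi \in \pi_1^2(H,w)$ the composite action satisfies $\beta\cdot(\varpi\cdot\tw) = (\beta\varpi)\cdot\tw$ (this is where the left-action convention fixed in Remark~\ref{remark order of concatenation} matters), so the left-hand side is $(f_*(\gamma)\varpi)\cdot\tw$ and the right-hand side is $(\varpi f_*(\gamma))\cdot\tw$; simple transitivity on $p_H^{-1}(w)$ then forces $f_*(\gamma)\varpi = \varpi f_*(\gamma)$. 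Since $\gamma = \alpha$ was arbitrary in $\Gamma = \pi_1^2(G,v)$, this is exactly the claim. An alternative, more hands-on route avoiding Theorem~\ref{theorem coincide} would be to represent $\varpi$ directly by a $\times$-homotopy $h \colon G \times I_n \to H$ from $f$ to $f$, lift it to $\th$ on $\tG$ as in the proof of Theorem~\ref{theorem coincide}, and compute the effect of a loop $\alpha$ at $v$; but this amounts to re-deriving the same commutation by tracking walks, so I would prefer the clean deduction from Theorem~\ref{theorem coincide} above.
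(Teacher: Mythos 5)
Your proposal is correct and matches the paper's own argument: apply Theorem~\ref{theorem coincide} to conclude that $\varpi\cdot\tf$ is a $\Gamma$-equivariant element of $\Hom^\Gamma(\tG,\tH)_{\tf}$, evaluate the equivariance identity at $\tv$ to obtain $(f_*(\alpha)\varpi)\cdot\tw = (\varpi f_*(\alpha))\cdot\tw$, and cancel using freeness of the $\pi_1^2(H,w)$-action on $V(\tH)$. The paper states the cancellation step via freeness of the action rather than simple transitivity on the fiber, but these are the same fact.
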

\begin{proof}
It follows from Theorem~\ref{theorem coincide} that $\tf$ and $\varpi \tf$ are $\Gamma$-equivariant. Hence we have
\[ (f_*(\alpha) \varpi) \cdot \tf(\tv) = (\varpi \tf)(\alpha \tv) = (\varpi f_*(\alpha))(\tf(\tv)).\]
Since the action of $\pi_1^2(H, w)$ on $V(\tH)$ is free, we conclude $\varpi f_*(\alpha) = f_*(\alpha) \varpi$.
\end{proof}

\begin{lemma} \label{lemma covering}
Let $\tg \in \Hom^\Gamma(\tG, \tH)_{\tf}$ and suppose $\pi(\tg) = f$ (for the definition of $\pi \colon \Hom^\Gamma(\tG, \tH) \to \Hom(G,H)$, see Section~\ref{section construction}). Then there is $\varpi \in \pi^2_1(H, w)$ such that $\tg = \varpi \tf$.
\end{lemma}
\begin{proof}
Since $p_H(\tg(\tv)) = \pi(\tg)(v) = f(v) = p_H(\tf(\tv))$, there is $\varpi \in \pi_1^2(H,w)$ such that $\tg(\tv) = \varpi \tf(\tv)$. By the uniqueness of the lifting criterion (Lemma~\ref{lemma lifting}), we have $\tg = \varpi \tf$.
\end{proof}

We are now ready to prove Theorem~\ref{theorem exact}. For the reader's convenience, we restate the theorem below.

\medskip
\noindent
{\bf Theorem~\ref{theorem exact}.} 
\textit{There is a following short exact sequence of groups:}
\[ 1 \to \pi_1(\Hom^\Gamma(\tG, \tH)_{\tf}, \tf) \to \pi_1(\Hom(G,H)_f, f) \to \Pi(f,v) \to 1.\]

\begin{proof}[Proof of Theorem~\ref{theorem exact}]
This follows from Theorem~\ref{theorem covering}, Theorem~\ref{theorem coincide} and Lemma~\ref{lemma covering}.
\end{proof}

The following corollary is a direct consequence of Theorem~\ref{theorem exact}.

\begin{corollary} \label{corollary simply connected}
If $\Hom^\Gamma(\tG, \tH)_{\tf}$ is simply connected, then $\pi_1(\Hom(G,H)_f, f)$and $\Pi(f,v)$ are isomorphic.
\end{corollary}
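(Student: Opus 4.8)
The plan is to derive the corollary formally from the short exact sequence furnished by Theorem~\ref{theorem exact}. First I would write down that sequence for our fixed data $(v, w = f(v), \tf)$:
\[ 1 \to \pi_1(\Hom^\Gamma(\tG, \tH)_{\tf}, \tf) \to \pi_1(\Hom(G,H)_f, f) \xrightarrow{\Psi} \Pi(f,v) \to 1, \]
where $\Psi$ denotes the third homomorphism (explicitly described, via Theorem~\ref{theorem covering}, in terms of lifting loops in $\Hom(G,H)_f$ based at $f$ to paths in $\Hom^\Gamma(\tG,\tH)_{\tf}$ starting at $\tf$). All hypotheses of Theorem~\ref{theorem exact} are automatically in force, since $G$ and $H$ are connected graphs with at least one edge by the standing conventions of Subsection~\ref{subsection notation}.

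Next I would specialize to the hypothesis of the corollary. If $\Hom^\Gamma(\tG, \tH)_{\tf}$ is simply connected, then $\pi_1(\Hom^\Gamma(\tG, \tH)_{\tf}, \tf)$ is the trivial group, so the sequence reads
\[ 1 \to 1 \to \pi_1(\Hom(G,H)_f, f) \xrightarrow{\Psi} \Pi(f,v) \to 1. \]
Exactness at $\pi_1(\Hom(G,H)_f, f)$ then says that $\ker \Psi$ is the image of the trivial group, i.e. $\Psi$ is injective; exactness at $\Pi(f,v)$ is precisely the surjectivity of $\Psi$. Hence $\Psi$ is an isomorphism $\pi_1(\Hom(G,H)_f, f) \cong \Pi(f,v)$, which is the assertion.

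There is essentially no obstacle here: all the content is already contained in Theorem~\ref{theorem exact}, and the corollary is just the degenerate case of that exact sequence in which the kernel term vanishes --- morally the same phenomenon as a covering space with simply connected total space realizing its group of deck transformations as the fundamental group of the base. The only point worth a remark is that ``simply connected'' is understood to include connectedness, so that $\Hom^\Gamma(\tG,\tH)_{\tf}$ is path-connected and the exact sequence of Theorem~\ref{theorem exact} is indeed the relevant tool to invoke.
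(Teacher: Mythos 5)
Your argument is exactly the paper's: the corollary is stated there as a direct consequence of Theorem~\ref{theorem exact}, and your proof simply makes explicit that when the kernel term $\pi_1(\Hom^\Gamma(\tG,\tH)_{\tf},\tf)$ vanishes, exactness forces $\Psi$ to be injective as well as surjective. Nothing to add or correct.
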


\begin{remark} \label{remark not simply connected}
In general, $\Hom^\Gamma(\tG,\tH)_{\tf}$ is not necessarily simply connected. Indeed, consider the case that $G = K_n$, $H = K_{n+1}$ for $n \ge 3$, and $f \colon G \to H$ is any graph homomorphism. Since $\pi_1^2(H,f(v))_{ev} \cong \pi_1(\NNN(K_{n+1}), f(v)) \cong 1$ (Theorem~\ref{theorem neighborhood complex}). Since $\Pi(f, v) \subset \pi_1^2(H,f(v))_{ev}$, $\Pi(f,v)$ is trivial. Hence Theorem~\ref{theorem exact} implies that $\pi_1(\Hom^\Gamma(\tG, \tH)) \cong \pi_1(\Hom(G,H)_f)$. Since $\Hom(G,H)$ is a wedge of circles and is not contractible (see \cite[Section~4]{BK1}), $\pi_1(\Hom^\Gamma(\tG,\tH)_{\tf})$ is non-trivial in this case.
\end{remark}

Recall that if the universal covering of a simplicial complex $X$ is contractible, then the homotopy type of $X$ is determined by its fundamental group (see \cite[Theorem 1B.8]{Hatcher}). Thus for a group $\Pi$, we denote by $K(\Pi, 1)$ the simplicial complex such that its universal covering is contractible and $\pi_1(K(\Pi,1)) \cong \Pi$. Note that if $\Pi$ is trivial then $K(\Pi,1)$ is contractible, and that if $\Pi$ is an infinite cyclic group then $K(\Pi, 1)$ is a circle.

Theorem~\ref{theorem contractible 1} and Corollary~\ref{corollary simply connected} imply the following:

\begin{corollary} \label{corollary K}
If $\tH$ is a tree, then $\Hom(G,H)_f$ is $K(\Pi(f,v), 1)$.
\end{corollary}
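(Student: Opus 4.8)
The plan is to combine the two structural ingredients already assembled. First I would invoke Theorem~\ref{theorem contractible 1}: since $\tH$ is a tree, the component $\Hom^\Gamma(\tG,\tH)_{\tf}$ is contractible, so in particular it is simply connected. Feeding this into Corollary~\ref{corollary simply connected} immediately yields that $\pi_1(\Hom(G,H)_f, f) \cong \Pi(f,v)$.

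Next I would upgrade this fundamental-group isomorphism to a statement about homotopy type. The key observation is that $\Hom^\Gamma(\tG,\tH)_{\tf}$ is not merely simply connected but contractible, and by Proposition~\ref{proposition Hom covering} together with the component analysis in Section~\ref{section fundamental group} (see the discussion around Theorem~\ref{theorem exact} and Theorem~\ref{theorem covering}), the covering map $\pi$ restricts to a covering $\Hom^\Gamma(\tG,\tH)_{\tf} \to \Hom(G,H)_f$. Since the total space is contractible, this is a universal covering, and $\Hom(G,H)_f$ is a connected CW (simplicial) complex with contractible universal cover, hence a $K(\pi_1,1)$. Combining with the fundamental group computation of the previous paragraph, $\Hom(G,H)_f$ is a $K(\Pi(f,v),1)$, which is exactly the assertion (recalling the convention for $K(\Pi,1)$ fixed just before the corollary, including the degenerate cases where $\Pi$ is trivial or infinite cyclic).

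I anticipate no serious obstacle here: the corollary is essentially a two-line deduction, since all the substantive work — constructing the covering (Section~\ref{section construction}), proving contractibility of the cover when $\tH$ is a tree (Section~\ref{section contractibility}), and identifying the deck group with $\Pi(f,v)$ (Section~\ref{section fundamental group}) — has already been carried out. The one point requiring a little care is making sure that ``$\Hom^\Gamma(\tG,\tH)_{\tf}$ is the universal covering of $\Hom(G,H)_f$'' is legitimately asserted: this needs that $\pi$ restricted to the component $\Hom^\Gamma(\tG,\tH)_{\tf}$ actually surjects onto $\Hom(G,H)_f$, which follows from Proposition~\ref{proposition homotopy} and Proposition~\ref{proposition Hom covering} since $\times$-homotopies lift; but in fact even this can be bypassed, as Corollary~\ref{corollary simply connected} already gives the $\pi_1$ statement, and the contractibility of the cover gives asphericity directly via \cite[Theorem 1B.8]{Hatcher}.

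\begin{proof}[Proof of Corollary~\ref{corollary K}]
Since $\tH$ is a tree, Theorem~\ref{theorem contractible 1} implies that $\Hom^\Gamma(\tG,\tH)_{\tf}$ is contractible, and in particular simply connected. By Corollary~\ref{corollary simply connected}, we have $\pi_1(\Hom(G,H)_f, f) \cong \Pi(f,v)$. By Propositions~\ref{proposition homotopy} and \ref{proposition Hom covering}, the poset covering map $\pi$ restricts to a covering $\Hom^\Gamma(\tG,\tH)_{\tf} \to \Hom(G,H)_f$ whose total space is contractible; hence this is the universal covering of $\Hom(G,H)_f$. Therefore $\Hom(G,H)_f$ is a connected simplicial complex with contractible universal covering, so its homotopy type is determined by $\pi_1(\Hom(G,H)_f, f) \cong \Pi(f,v)$ by \cite[Theorem 1B.8]{Hatcher}. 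That is, $\Hom(G,H)_f$ is $K(\Pi(f,v),1)$.
\end{proof}
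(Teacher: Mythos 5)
Your proof is correct and follows the same route the paper intends: the paper simply states that Theorem~\ref{theorem contractible 1} and Corollary~\ref{corollary simply connected} imply the result, and your proof fills in exactly those two ingredients — contractibility of $\Hom^\Gamma(\tG,\tH)_{\tf}$ gives both simple connectivity (hence $\pi_1(\Hom(G,H)_f)\cong\Pi(f,v)$) and a contractible universal cover of $\Hom(G,H)_f$. The one cosmetic remark: citing Proposition~\ref{proposition homotopy} for surjectivity of the restricted covering works, but it is more standard (and what the paper implicitly relies on) to invoke the general fact that a covering map restricted to a connected component of the total space is a covering onto a connected component of the base.
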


We conclude this subsection by providing a direct description of the group homomorphism $\Psi \colon \pi_1(\Hom(G,H)_f, f) \to \Pi(f,v)$ in Theorem~\ref{theorem exact}. Let $\alpha \in \pi_1(\Hom(G,H)_f, f)$. Then there is an edge path (see \cite{Spanier})
\[ f = f_0 \le \eta_1 \ge \eta'_1 \le \eta_2 \ge \cdots \le \eta_n \ge f_n = f\]
in $\Hom(G,H)_f$ representing $\alpha$. For each $i = 1, \cdots, n-1$, let $f_i$ be a graph homomorphism such that $f_i \le \eta'_i$. Then the above edge path is homotopic to the edge path
\[ f = f_0 \le \eta_1 \ge f_1 \le \eta_2 \ge \cdots \le \eta_n \ge f_n.\]
Then Lemma~\ref{lemma x-homotopy} implies that we have a $\times$-homotopy $h \colon G \times I_n \to H$ from $f$ to $f$. The element realized by $h$ is $\Psi (\alpha)$.

To see this, by the same argument of the proof of Theorem~\ref{theorem coincide}, we can take
\[ \tf = \tf_0 \le \teta_1 \ge \tf_1 \le \teta_2 \ge \cdots \le \teta_n \ge \tf_n\]
a sequence of $\Hom^\Gamma(\tf,\tg)_{\tf}$ such that $\pi(\tf_i) = f_i$ and $\pi(\teta_i) = \eta_i$. By Theorem~\ref{theorem covering} we have $\tf_n = \Psi(\alpha) \cdot \tf$. Define the $\Gamma$-equivariant $\times$-homotopy $\th \colon \tG \times I_n \to \tH$ by $\th(\tx, i) = \tf_i(\tx)$. The $2$-homotopy class of walks realized by $\th$ joins $\tv$ to $\Psi(\alpha) \tv$, and $p_H \colon \tH \to H$ sends the $2$-homotopy class realized by $\th$ to the $2$-homotopy class realized by $h$. Thus Lemma~\ref{lemma description of action} implies that the $2$-homotopy class realized by $h$ is $\Psi(\alpha)$. This completes the proof.

\subsection{Proof of Theorem~\ref{theorem main}} \label{subsection proof of main theorem}

In this subsection, we complete the proof of Theorem~\ref{theorem main}.

When $H$ is square-free, Wrochna provided an algorithm to determine the isomorphism type of $\Pi(f,v)$ (see \cite[Theorem~8.1]{WrochnaSIAM}). However, to prove Theorem~\ref{theorem main}, it suffices to rely on a theorem of the following form.
%To prove Theorem~\ref{theorem main}, we only need the following type of the theorem.
Here, recall that $\pi_1(H,w)$ is a free group, and every subgroup of a free group is also a free group (see \cite[Proposition~1A.2 and Theorem~1A.4]{Hatcher}).

\begin{theorem}[Wrochna \cite{WrochnaSIAM}] \label{theorem Wrochna}
Assume that $H$ is square-free. Let $K$ be the image of the group homomorphism $f_* \colon \pi_1(G,v) \to \pi_1(H,w)$. Then the following hold:
\begin{enumerate}[(1)]
\item If $K$ is non-abelian free, then $\Pi(f,v)$ is trivial.

\item If $K$ is an infinite cyclic group, then $\Pi(f,v)$ is trivial or an infinite cyclic group.

\item If $K$ is trivial, then $\Pi(f,v)$ is $\pi_1(H,f(v))_{ev}$.
\end{enumerate}
\end{theorem}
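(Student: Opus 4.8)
The plan is to establish the upper bounds in all three cases uniformly and then supply the matching lower bounds case by case. Write $K=\mathrm{im}(f_*\colon \pi_1(G,v)\to\pi_1(H,w))$; since $H$ is square-free, $\pi_1(H,w)=\pi_1^2(H,w)$ is a free group by Lemma~\ref{lemma square-free isomorphism}, and, because $\pi_1^1(G,v)\twoheadrightarrow\Gamma$ commutes with $f_*$, $K$ also equals $f_*(\Gamma)$. By definition $\Pi(f,v)\subseteq\pi_1^2(H,w)_{ev}$, and by Corollary~\ref{corollary commutative} every element of $\Pi(f,v)$ commutes with $K$, so
\[ \Pi(f,v)\ \subseteq\ C\cap\pi_1^2(H,w)_{ev}, \]
where $C$ is the centralizer of $K$ in the free group $\pi_1^2(H,w)$. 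I would then invoke the standard structure of centralizers in free groups: if $K$ is non-abelian then $C=1$ (two non-commuting elements of a free group have trivial common centralizer, since their centralizers are maximal cyclic and cannot coincide), which settles case~(1); if $K$ is infinite cyclic then $C$ is the maximal cyclic subgroup containing $K$, hence infinite cyclic, so $\Pi(f,v)$ is cyclic; and if $K$ is trivial then $C=\pi_1^2(H,w)$, so $\Pi(f,v)\subseteq\pi_1^2(H,w)_{ev}$.

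For the lower bounds I would argue through Theorem~\ref{theorem coincide}, which identifies $\Pi(f,v)$ with $\{\varpi\mid \varpi\cdot\tf\in\Hom^\Gamma(\tG,\tH)_{\tf}\}$. The structural remark to exploit is that for $\varpi\in C$ the rule $\eta\mapsto\varpi\eta$, $(\varpi\eta)(\tx)=\varpi\cdot\eta(\tx)$, is a simplicial automorphism of $\Hom^\Gamma(\tG,\tH)$: $\Gamma$-equivariance of $\varpi\eta$ survives precisely because $\varpi$ centralizes $K=f_*(\Gamma)$, and $\pi(\varpi\eta)=\pi(\eta)$ since $p_H$ is invariant under the $\pi_1^2(H,w)$-action. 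By Lemma~\ref{lemma covering} this $C$-action is free and simply transitive on the fibre $\pi^{-1}(f)$. Hence $\Pi(f,v)$ is exactly the stabilizer of the component of $\tf$ under the $C$-action on $\pi_0(\Hom^\Gamma(\tG,\tH))$, and $[C:\Pi(f,v)]$ equals the number of components of $\Hom^\Gamma(\tG,\tH)$ lying over $\Hom(G,H)_f$ (these being permuted transitively by $C$). So it suffices to count those components; I expect $\Pi(f,v)$ to be the full group $C\cap\pi_1^2(H,w)_{ev}$ in case~(3) and of finite index in $C$ in case~(2).

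In case~(3), $K=1$, so $\Gamma$ acts trivially on $\tH$; thus a $\Gamma$-equivariant multi-homomorphism on $\tG$ factors through $\tG/\Gamma=G$, giving $\Hom^\Gamma(\tG,\tH)\cong\Hom(G,\tH)$ with $\tf$ corresponding to a lift $\hat{f}\colon G\to\tH$ of $f$ (available by Lemma~\ref{lemma lifting} since $f_*(\Gamma)=1$). Since $\tH$ is a tree with an edge it folds onto $K_2$, so $\Hom(G,\tH)\simeq\Hom(G,K_2)$; moreover $K=1$ forces $G$ bipartite (an odd closed walk in $G$ would map to an odd null-homotopic walk in $H$, which is impossible), so $\Hom(G,K_2)\cong S^0$ and $\Hom(G,\tH)$ has at most two components, separated by a $2$-colouring $\epsilon\colon\tH\to K_2$. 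For $\varpi\in\pi_1^2(H,w)_{ev}$ the deck automorphism of $\tH$ given by $\varpi$ preserves the bipartition, hence $\epsilon\circ(\varpi\hat{f})=\epsilon\circ\hat{f}$, so $\varpi\hat{f}$ and $\hat{f}$ lie in the same component and $\varpi\in\Pi(f,v)$. This gives $\pi_1^2(H,w)_{ev}\subseteq\Pi(f,v)$, completing case~(3).

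Case~(2) is the step I expect to be the main obstacle: $K$ is infinite cyclic, $\Pi(f,v)$ is already known to be cyclic, and one must show it is nontrivial. Since $\Gamma$ acts on $\tH$ through $K\cong\ZZ$, a $\Gamma$-equivariant fold of $\tH$ amounts to a fold of the quotient $\hat{H}:=\tH/K$, which is a covering of the square-free $H$, hence square-free, with $\pi_1(\hat{H})\cong K\cong\ZZ$; so $\hat{H}$ folds onto its core cycle $C\cong C_\ell$, and pulling back gives a $\Gamma$-equivariant fold of $\tH$ onto $\widetilde{C_\ell}$. Tracking this through $\pi$ reduces the count to the case $H=C_\ell$; concretely $\Pi(f,v)\cong\Pi(f_C,v)$ for the composite $f_C\colon G\to\hat{H}\to C_\ell$. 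Then Theorem~\ref{theorem cycle} with Corollary~\ref{corollary K} (or Corollary~\ref{corollary simply connected}) shows $\Pi(f_C,v)=\pi_1(\Hom(G,C_\ell)_{f_C})$ is trivial or infinite cyclic. To pin it down as infinite cyclic — equivalently, that this component is a circle — one must produce an explicit $\times$-homotopy from $f_C$ to $f_C$ sweeping the image once around $C_\ell$ and check that the closed walk it realizes is a nonzero power of the generator of $\pi_1(C_\ell)$; organizing this sweep and verifying compatibility with the fold reductions is the delicate point. Since only the list of possible isomorphism types — trivial, infinite cyclic, or $\pi_1(H,w)_{ev}$ — is needed for Theorem~\ref{theorem main}, the argument up to this point already suffices for that application.
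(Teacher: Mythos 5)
Your proposal takes essentially the same route as the paper's appendix proof. The upper bound $\Pi(f,v)\subseteq Z_{\pi_1^2(H,w)}(K)$ from Corollary~\ref{corollary commutative}, combined with the centralizer structure of subgroups of free groups, is exactly the paper's Lemma~\ref{lemma commute 1} and its use; and your case (3) is the same lifting argument the paper runs through Lemma~\ref{lemma lifting}, the tree structure of $\tH$, Lemma~\ref{lemma tree tree} (equivalently your fold to $K_2$ and bipartition of $\tH$), Proposition~\ref{proposition homotopy}, and Lemma~\ref{lemma description of action}. Your repackaging of case (3) via the identification $\Hom^\Gamma(\tG,\tH)\cong\Hom(G,\tH)$ when $K=1$ (valid because $\Gamma$ then acts trivially on $\tH$, so $\Gamma$-equivariant maps factor through $\tG/\Gamma=G$) is a correct and clean equivalent of what the paper does.

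Where you genuinely add something is in flagging that the centralizer bound only yields $\Pi(f,v)\in\{1,\ZZ\}$ in case (2), so the literal conclusion that $\Pi(f,v)$ is infinite cyclic requires a separate nontriviality argument. You are right about this, and in fact the paper's own appendix proof has the same shortfall: it states that (1) and (2) "follow from Lemma~\ref{lemma commute 1}", but the lemma only supplies the containment. You are also right that the weaker trivial-or-cyclic dichotomy is all that is used in Theorem~\ref{theorem precise 1}(2) and hence in Theorem~\ref{theorem main}. Your proposed repair of case (2) — quotient to $\hat H=\tH/K$, fold to the core cycle $C_\ell$, and exhibit an explicit rotating $\times$-homotopy — is a direction not present in the paper, but it is left unfinished at precisely the delicate step you yourself flag: the fold $\hat H\to C_\ell$ is not a covering, so $\Pi(f,v)\cong\Pi(f_C,v)$ is not automatic and the sweeping $\times$-homotopy and the class it realizes are not constructed. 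As written, your argument establishes (1), (3), and the dichotomy in (2), matching what the paper's alternative proof actually shows, with the full statement of (2) still resting on Wrochna's original argument in \cite{WrochnaSIAM}.
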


In Appendix, we provide an alternative proof of Theorem~\ref{theorem Wrochna}. In this section, we complete the proof of the main theorem (Theorem~\ref{theorem main}) by Theorem~\ref{theorem Wrochna}. The following is the main part of Theorem~\ref{theorem main}.

\begin{theorem} \label{theorem precise 1}
Let $H$ a square-free graph. Let $K$ be the image of the group homomorphism
\[ f_* \colon \pi_1(G,v) \to \pi_1(H,w).\]
Then the following hold:
\begin{enumerate}[(1)]
\item If $K$ is a non-abelian free group, then $\Hom(G,H)_f$ is contractible.

\item If $K$ is an infinite cyclic group, then $\Hom(G,H)_f$ is contractible or homotopy equivalent to a circle.

\item If $K$ is trivial and $H$ is bipartite, then $\Hom(G,H)_f$ is homotopy equivalent to $H$.

\item If $K$ is trivial and $H$ is non-bipartite, then $\Hom(G,H)_f$ is homotopy equivalent to a connected double cover of $H$.
\end{enumerate}
\end{theorem}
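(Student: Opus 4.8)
The plan is to reduce the whole statement to Corollary~\ref{corollary K} together with Wrochna's computation of $\Pi(f,v)$ in Theorem~\ref{theorem Wrochna}, and then to identify the resulting $K(\Pi,1)$-complexes up to homotopy. Since $H$ is square-free, $\tH$ is a tree (as recalled before Theorem~\ref{theorem contractible 1}), so Corollary~\ref{corollary K} shows that $\Hom(G,H)_f$ is a $K(\Pi(f,v),1)$. Hence its homotopy type depends only on the isomorphism class of $\Pi(f,v)$, two complexes of the form $K(\Pi,1)$ being homotopy equivalent. Because $\pi_1(H,w)$ is free, the subgroup $K = \mathrm{im}(f_*)$ is free and therefore trivial, infinite cyclic, or non-abelian free; this is exactly the trichotomy of Theorem~\ref{theorem Wrochna}, and I would run through its three outputs, with the case $K = 1$ split further according to whether $H$ is bipartite.

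If $K$ is non-abelian free, then $\Pi(f,v) = 1$ by Theorem~\ref{theorem Wrochna}(1), so $\Hom(G,H)_f$ is a $K(1,1)$, i.e.\ contractible; this is (1). If $K$ is infinite cyclic, then $\Pi(f,v) \cong \ZZ$ by Theorem~\ref{theorem Wrochna}(2), so $\Hom(G,H)_f$ is a $K(\ZZ,1)$ and is homotopy equivalent to a circle; in particular (2) holds.

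For (3) and (4) we have $K = 1$, and Theorem~\ref{theorem Wrochna}(3) gives $\Pi(f,v) = \pi_1(H,w)_{ev}$, regarded as a subgroup of $\pi_1(H,w)$ via the isomorphism $\pi_1(H,w) \cong \pi_1^2(H,w)$ of Lemma~\ref{lemma square-free isomorphism}. Here I would invoke the standard fact that a connected graph is a $K(\pi_1,1)$, its universal cover being a tree. If $H$ is bipartite, every closed walk of $H$ has even length, so the parity homomorphism $\pi_1(H,w) \to \ZZ/2$ is trivial, $\pi_1(H,w)_{ev} = \pi_1(H,w)$, and hence $\Hom(G,H)_f$ is a $K(\pi_1(H,w),1) \simeq H$; this is (3). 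If $H$ is non-bipartite, the parity homomorphism is surjective, so $\pi_1(H,w)_{ev}$ has index $2$; take the connected covering $p \colon \hat H \to H$ with $p_*\pi_1(\hat H) = \pi_1(H,w)_{ev}$. Then $\hat H$ is a connected double cover of $H$ and, being a graph, is a $K(\pi_1(H,w)_{ev},1)$, whence $\Hom(G,H)_f \simeq \hat H$; and by the remark following Theorem~\ref{theorem main} the homotopy type of $\hat H$ is independent of which connected double cover is chosen. This is (4).

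I do not anticipate a serious obstacle here: the hard analytic and combinatorial work has been done in Theorem~\ref{theorem contractible 1} (hence Corollary~\ref{corollary K}) and in Theorem~\ref{theorem Wrochna}. The only steps that deserve a little attention are in (3) and (4): computing $\pi_1(H,w)_{ev}$ via the parity homomorphism and Lemma~\ref{lemma square-free isomorphism}, observing that an index-$2$ subgroup of $\pi_1$ corresponds to a \emph{connected} double cover, and using the asphericity of graphs so that the covering graph itself serves as the required $K(\pi_1,1)$.
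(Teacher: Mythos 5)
Your proposal is correct and takes essentially the same route as the paper: combine Corollary~\ref{corollary K} (which identifies $\Hom(G,H)_f$ as a $K(\Pi(f,v),1)$ once $\tH$ is a tree) with Theorem~\ref{theorem Wrochna}'s computation of $\Pi(f,v)$, and then realize each resulting $K(\pi,1)$ by a point, a circle, $H$, or a connected double cover of $H$, using the parity homomorphism and the index-$2$ subgroup / connected double cover correspondence for (3)--(4). The paper's proof is the same argument stated more tersely (e.g.\ it dispatches case (2) with ``the proof is similar''); you make a couple of the implicit steps explicit, such as the asphericity of graphs and the trichotomy for subgroups of free groups, but there is no substantive difference.
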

\begin{proof}[Proof of Theorem~\ref{theorem precise 1}]
Suppose (1). Then Theorem~\ref{theorem Wrochna} implies that $\pi_1(\Hom(G,H)_f) \cong \Pi(f,v)$ is trivial, and hence $\Hom(G,H)_f = K(\Pi(f,v),1)$ (Corollary~\ref{corollary K}) is contractible. The proof of (2) is similar and is omitted. To see (3) and (4), suppose that $K$ is trivial. Then, by Theorem~\ref{theorem Wrochna} and Corollary~\ref{corollary K}, we have $\Hom(G,H)_f = K(\pi_1(H,v)_{ev},1)$. If $H$ is bipartite, we have $\pi_1(H,v)_{ev} = \pi_1(H,v)$, hence $\Hom(G,H)_f = K(\pi_1(H), 1) \simeq H$. If $H$ is non-bipartite, then $\pi_1(H,v)_{ev}$ is a subgroup of $\pi_1(H,v)$ index 2. Hence there is a connected double cover $H^*$ of $H$ such that $\pi_1(H^*)$ is isomorphic to $\pi_1(H,v)_{ev}$. This shows that $\Hom(G,H)_f = K(\pi_1(H^*), 1)$ is homotopy equivalent to $H^*$. This completes the proof.
\end{proof}

In particular, when $G$ is non-bipartite, we have the following corollary:

\begin{corollary} \label{corollary precise 1}
Let $G$ be a non-bipartite graph, and $H$ a square-free graph. Then every component of $\Hom(G,H)$ is contractible or homotopy equivalent to $S^1$.
\end{corollary}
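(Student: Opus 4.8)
The plan is to deduce this corollary directly from Theorem~\ref{theorem precise 1} by showing that, once $G$ is non-bipartite, only items (1) and (2) of that theorem can arise.

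First I would observe that every connected component of $\Hom(G,H)$ has the form $\Hom(G,H)_f$ for some graph homomorphism $f\colon G\to H$: given any $\eta\in\Hom(G,H)$, choosing $f(x)\in\eta(x)$ for each vertex $x$ produces a graph homomorphism with $f\le\eta$, since $(x,y)\in E(G)$ forces $\eta(x)\times\eta(y)\subseteq E(H)$ and hence $(f(x),f(y))\in E(H)$. So it suffices to prove, for each such $f$, that the image $K$ of $f_*\colon\pi_1(G,v)\to\pi_1(H,w)$ is nontrivial; then cases (3) and (4) of Theorem~\ref{theorem precise 1} are excluded, while cases (1) and (2) each assert that $\Hom(G,H)_f$ is contractible or homotopy equivalent to a circle, which is exactly the claim.

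Next I would establish $K\ne 1$. Since $G$ is connected and non-bipartite there is a closed walk $c$ at $v$ of odd length, and its image $f\circ c$ is a closed walk at $w$ of odd length in $H$. Because $H$ is square-free it is simple, hence has no loops and in particular no closed walk of length $1$; moreover the relation~(1) used to define $1$-homotopy changes the length of a walk by exactly $2$, so the parity of the length is a $1$-homotopy invariant. Consequently an odd-length closed walk is never $1$-homotopic to the constant walk, so $f_*[c]=[f\circ c]$ is a nontrivial element of $\pi_1(H,w)=\pi_1^1(H,w)$, and therefore $K\ne 1$. Being a subgroup of the free group $\pi_1(H,w)$, the group $K$ is free, and being nontrivial it is either infinite cyclic or non-abelian free; applying Theorem~\ref{theorem precise 1}(1) or (2) accordingly completes the argument.

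I do not expect a serious obstacle: the whole weight of the result sits in Theorem~\ref{theorem precise 1}, and this corollary amounts to bookkeeping. The only two minor points that require care are that every component of $\Hom(G,H)$ contains an honest graph homomorphism and that an odd closed walk in a simple graph represents a nontrivial element of its fundamental group — both are elementary.
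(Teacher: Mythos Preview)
Your proof is correct and follows essentially the same approach as the paper's: take a graph homomorphism $f$, observe that non-bipartiteness of $G$ yields an odd element of $\pi_1(G,v)$ whose image under $f_*$ is an odd (hence nontrivial) element of $\pi_1(H,w)$, and then invoke Theorem~\ref{theorem precise 1}. Your version is slightly more explicit about why every component contains an honest graph homomorphism and why parity is a $1$-homotopy invariant, but the core argument is identical.
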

\begin{proof}
Let $f \colon G \to H$ be a graph homomorphism. Since $G$ is non-bipartite, $\pi_1(G)$ has an odd element $\alpha$. Then $f_*(\alpha) \in \pi_1(H)$ is also an odd element, and hence is non-trivial. Therefore $f_* \colon \pi_1(G) \to \pi_1(H)$ is non-trivial, and Theorem~\ref{theorem precise 1} completes the proof.
\end{proof}

We are now ready to complete the proof of Theorem~\ref{theorem main}.

\begin{proof}[Proof of Theorem~\ref{theorem main}]
This follows from Theorem~\ref{theorem precise 1} and Corollary~\ref{corollary precise 1}.
\end{proof}

Next we consider the case where  $G$ is bipartite. Then there is a graph homomorphism $f \colon G \to H$ such that $f_* \colon \pi_1(G) \to \pi_1(H)$ is trivial.

\begin{proposition} \label{proposition number of components}
Let $G$ be a bipartite graph, and $H$ a square-free graph which is neither contractible nor homotopy equivalent to $S^1$. Then the following hold:
\begin{enumerate}[(1)]
\item If $H$ is bipartite, then the number of components of $\Hom(G,H)$ homotopy equivalent to $H$ is $2$.

\item If $H$ is non-bipartite, then the number of components of $\Hom(G,H)$ homotopy equivalent to a connected double cover of $H$ is $1$.
\end{enumerate}
\end{proposition}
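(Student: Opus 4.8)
The plan is to show, in both cases, that the components being counted are exactly the components $\Hom(G,H)_f$ on which $f_\ast$ has trivial image in $\pi_1(H)$, and then to count those by lifting to the universal $2$-covering $\tH$. Since $H$ is a connected graph that is neither contractible nor homotopy equivalent to $S^1$, its Euler characteristic satisfies $\chi(H)\le -1$; as $\chi$ is a complete homotopy invariant among connected graphs and $\chi(H^\ast)=2\chi(H)<\chi(H)$ for any connected double cover $H^\ast$, such an $H^\ast$ is homotopy equivalent to none of a point, $S^1$, or $H$. Hence Theorem~\ref{theorem precise 1} gives: $\Hom(G,H)_f$ is homotopy equivalent to $H$ (resp.\ to a connected double cover of $H$) if and only if $H$ is bipartite (resp.\ non-bipartite) and $K:=\operatorname{im}(f_\ast\colon\pi_1(G,v)\to\pi_1(H,w))$ is trivial. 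Moreover every component of $\Hom(G,H)$ contains a graph homomorphism, because for any $\eta$ any choice $x\mapsto y_x\in\eta(x)$ is a graph homomorphism lying below $\eta$. Thus in either case the number sought is the number of $\times$-homotopy classes of graph homomorphisms $f\colon G\to H$ with $K$ trivial.

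Next I would transfer this to $\tH$. As $H$ is square-free, $\tH$ is a tree, $\pi_1^2(\tH)$ is trivial, and by Lemma~\ref{lemma square-free isomorphism} the triviality of $K$ is equivalent to triviality of the image of $f_\ast\colon\pi_1^2(G,v)\to\pi_1^2(H,w)$, hence by the lifting criterion (Lemma~\ref{lemma lifting}) to the existence of a lift of $f$ through $p_H\colon\tH\to H$. By Remark~\ref{remark Hom covering} the induced map $q:=(p_H)_\ast\colon\Hom(G,\tH)\to\Hom(G,H)$ is a poset covering map, so $|q|$ is a covering onto the union of precisely those components on which $K$ is trivial. Lifting an edge-path through $|q|$, and using that over the connected graph $G$ the lifts of a homomorphism form a single orbit of the deck group $\pi_1^2(H,w)$ acting on $\tH$, one checks that the natural map $\pi_0(\Hom(G,\tH))/\pi_1^2(H,w)\to\pi_0(\Hom(G,H))$ is injective with image the set of components with $K$ trivial. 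So the number we want equals $\lvert\pi_0(\Hom(G,\tH))/\pi_1^2(H,w)\rvert$.

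It then remains to compute $\pi_0(\Hom(G,\tH))$ together with its $\pi_1^2(H,w)$-action. Fix a proper $2$-colouring $\varepsilon\colon\tH\to K_2$ (possible since the tree $\tH$ is bipartite), inducing $\varepsilon_\ast\colon\Hom(G,\tH)\to\Hom(G,K_2)$, where $\Hom(G,K_2)\cong S^0$ because a connected bipartite graph with an edge has exactly two proper $2$-colourings. For every finite subtree $T'\subseteq\tH$ containing an edge, iterating the fold theorem \cite{KozlovPAMS} collapses $T'$ to $K_2$ through colour-preserving folds: at each stage a leaf $v$ of the current tree has a neighbour $u$ of degree at least $2$, and folding $v$ onto another neighbour $w\ne v$ of $u$ is colour-preserving and satisfies $N(v)=\{u\}\subseteq N(w)$. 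Hence $(\varepsilon|_{T'})_\ast$ is a homotopy equivalence, and since $G$ is finite we have $\Hom(G,\tH)=\varinjlim_{T'}\Hom(G,T')$, so $\varepsilon_\ast$ is a bijection on $\pi_0$. Therefore $\pi_0(\Hom(G,\tH))$ has exactly two elements and is identified $\pi_1^2(H,w)$-equivariantly with $\pi_0(\Hom(G,K_2))$, on which $\gamma\in\pi_1^2(H,w)$ acts through $\varepsilon\circ\gamma$; and $\varepsilon\circ\gamma$ equals $\varepsilon$ when $\gamma$ is even and equals the composite of $\varepsilon$ with the swap of $K_2$ when $\gamma$ is odd, because $\gamma\tw$ lies in the same part of $\tH$ as $\tw$ iff $l(\gamma)$ is even, by the description of the action in Subsection~\ref{subsection 2-covering action}. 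Consequently, if $H$ is bipartite then $\pi_1^2(H,w)$ acts trivially and the count is $2$; if $H$ is non-bipartite then an odd element of $\pi_1^2(H,w)$ swaps the two colourings, the action is transitive, and the count is $1$. This is exactly the assertion of the Proposition.

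The step I expect to be the main obstacle is the computation of $\pi_0(\Hom(G,\tH))$: the fold reduction is standard for finite graphs, but it has to be assembled compatibly over all finite subtrees of $\tH$, and this must be combined with the covering-space bookkeeping of the second paragraph, namely the verification that $\pi_1^2(H,w)$-orbits on $\pi_0(\Hom(G,\tH))$ correspond bijectively to the components of $\Hom(G,H)$ on which $K$ is trivial.
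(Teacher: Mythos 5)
Your proof is correct, and the essential strategy coincides with the paper's: reduce (via Theorem~\ref{theorem precise 1} and the Euler-characteristic observation, which the paper leaves implicit) to counting the components of $\Hom(G,H)$ carrying a graph homomorphism $f$ with $K=\operatorname{im}(f_*)$ trivial, and then lift such $f$ into a universal covering tree over $H$, use that $\Hom(G,\text{tree})\simeq S^0$, and analyse the action of the deck group on the two components. The main difference is in which covering and which machinery you use. You work with the universal $2$-covering $p_H\colon\tH\to H$, invoke the $2$-covering lifting criterion (Lemma~\ref{lemma lifting}) and the poset-covering property of $(p_H)_*$ from Remark~\ref{remark Hom covering}, and establish a bijection $\pi_0(\Hom(G,\tH))/\pi_1^2(H,w)\to\{\text{components with }K=1\}$. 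The paper's Lemma~\ref{lemma number of trivial} instead uses the ordinary topological universal cover $T\to H$ and the classical lifting criterion \cite[Proposition~1.33]{Hatcher}; it only needs that the image of $p_*\colon\Hom(G,T)\to\Hom(G,H)$ meets at most two components (giving an upper bound of $2$), and then exhibits either two distinguishable null-homotopic maps (bipartite case) or one odd deck transformation identifying the two components (non-bipartite case). This is slightly more economical: no poset-covering bookkeeping is needed, and one does not have to prove injectivity of the orbit map. (It is worth noting that for square-free $H$ the two trees agree --- $T$ is already a $2$-covering precisely because $H$ has no $4$-cycles --- so you are not using a different object, only heavier technology to analyse it.) Your reproof of Lemma~\ref{lemma tree tree} via colour-preserving folds over finite subtrees is fine, but you could simply cite that lemma, which is stated for possibly infinite trees. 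The step you flagged as the likely obstacle (the orbit-to-component bijection) is exactly the part the paper's argument sidesteps; it does go through by edge-path lifting plus the freeness of the deck action, as you indicate, but one should make the ``uniqueness of the deck element over a connected $G$'' argument explicit, in parallel with Lemma~\ref{lemma covering}.
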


Proposition~\ref{proposition number of components} follows from Lemma~\ref{lemma number of trivial} below and Theorem~\ref{theorem precise 1}. Although Lemma~\ref{lemma number of trivial} is likely a folklore fact, we provide the proof here for the reader's convenience.

\begin{lemma} \label{lemma number of trivial}
Let $G$ be a bipartite graph. Then the following hold:
\begin{enumerate}[(1)]
\item If $H$ is bipartite, then the number of components of $\Hom(G,H)$ containing a graph homomorphism $f$ which induces a trivial map $\pi_1(G) \to \pi_1(H)$ is $2$.

\item If $H$ is non-bipartite, then the number of components of $\Hom(G,H)$ containing a graph homomorphism $f$ which induces a trivial map $\pi_1(G) \to \pi_1(H)$ is $1$.
\end{enumerate}
\end{lemma}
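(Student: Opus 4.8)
The plan is to pin down a small family of ``edge-valued'' homomorphisms, to show that every null-homotopic homomorphism $G\to H$ can be $\times$-deformed into one of them, and then to count.

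\textbf{Setup and the edge-valued maps.} Fix a graph homomorphism $\varepsilon_G\colon G\to K_2$; it exists and is one of exactly two such, since $G$ is connected bipartite with an edge. For an ordered edge $(a,b)$ of $H$ let $f_{(a,b)}\colon G\to H$ be the composite of $\varepsilon_G$ with the homomorphism $K_2\to H$ sending the two vertices to $a$ and to $b$ (so $f_{(a,b)}$ collapses one part of $G$ to $a$ and the other to $b$). Since $K_2$ has contractible geometric realization, $(f_{(a,b)})_*\colon\pi_1(G)\to\pi_1(H)$ is trivial, so each $f_{(a,b)}$ is among the homomorphisms counted; in particular the set counted is nonempty.

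\textbf{Reduction step.} First I would prove: \emph{every} homomorphism $f\colon G\to H$ with $f_*\colon\pi_1(G)\to\pi_1(H)$ trivial is $\times$-homotopic to some $f_{(a,b)}$. By the lifting criterion --- Lemma~\ref{lemma lifting} when $H$ is square-free, so that the universal $2$-covering $\tH$ is a tree and triviality of $f_*$ on $\pi_1$ is the same as triviality on $\pi_1^2$, or the classical lifting criterion along the topological universal covering of $H$ in general --- such an $f$ lifts to a homomorphism $\hat f\colon G\to T$ into a tree $T$ with $p\circ\hat f=f$ for the covering $p\colon T\to H$. It then suffices to show that any homomorphism into a tree is $\times$-homotopic to one whose image is a single edge: pass to the finite subtree $T'$ spanned by $\hat f(V(G))$ and induct on $|V(T')|$, using the fold lemma (\cite{KozlovPAMS}) --- the leaf at the end of a longest path of $T'$ is dominated by the vertex two steps along that path, so folding it yields a $\times$-homotopic homomorphism into a tree with fewer vertices. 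Composing the resulting $\times$-homotopy with $p$ (which carries edges to edges) shows $f\simeq_\times f_{(a,b)}$ for a suitable edge $(a,b)$ of $H$.

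\textbf{Counting the components of the $f_{(a,b)}$.} If $b,b'\in N_H(a)$, then, regarding homomorphisms as multi-homomorphisms, $f_{(a,b)}\le\eta\ge f_{(a,b')}$ where $\eta$ sends the part of $G$ carrying $a$ to $\{a\}$ and the other part to $\{b,b'\}$; hence $f_{(a,b)}$ and $f_{(a,b')}$ lie in the same component of $\Hom(G,H)$ by Lemma~\ref{lemma x-homotopy} and Proposition~\ref{proposition homotopy}, and symmetrically when $a,a'\in N_H(b)$. Chaining such moves along a walk in $H$ shows $f_{(a,b)}\simeq_\times f_{(a',b')}$ whenever $H$ admits an even-length walk from $a$ to $a'$. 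If $H$ is connected and non-bipartite such a walk exists for all $a,a'$, so all the $f_{(a,b)}$ lie in one component, and together with the reduction step this gives exactly one component meeting the set counted. If $H$ is bipartite, an even walk from $a$ to $a'$ exists exactly when $a,a'$ lie in the same part, so the $f_{(a,b)}$ fall into at most two classes according to which part contains $a$. These two classes are genuinely different components: post-composition with a $2$-colouring $\varepsilon_H\colon H\to K_2$ induces $(\varepsilon_H)_*\colon\Hom(G,H)\to\Hom(G,K_2)\cong S^0$, which is constant on components (the target being discrete) and takes different values on $f_{(a,b)}$ and $f_{(b,a)}$. Combined with the reduction step, there are exactly two such components.

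\textbf{Main obstacle.} The delicate point is the reduction step, precisely the claim that a homomorphism into a tree is $\times$-homotopic to an edge-valued one --- this is what the remark that the lemma is ``likely a folklore fact'' refers to. One cannot fold a leaf of a tree onto its unique neighbour, since there is no loop there; the fold must be onto the second-nearest vertex along a longest path, which forces passing to the finite spanned subtree and running an induction on its size. A lesser point is that, although the paper deliberately avoids pushing topological covering maps through $\Hom(-,-)$, here one lifts only a single homomorphism and transports only a single $\times$-homotopy, so no covering-space structure on $\Hom(-,-)$ is needed; and in the case actually used by Proposition~\ref{proposition number of components}, where $H$ is square-free, even the lifting is supplied directly by Lemma~\ref{lemma lifting}.
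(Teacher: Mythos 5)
Your proof is correct, and it takes a somewhat different route from the paper's. Both arguments begin identically: lift a null-homotopic $f$ along the topological universal covering $p\colon T\to H$ to $\hat f\colon G\to T$. From there the paper delegates everything to Lemma~\ref{lemma tree tree}, which says $\Hom(G,T)\simeq S^0$ with the two components distinguished by the parity of $d_T(f_0(x),f_1(x))$; this is proved via the fold theorem together with Lemma~\ref{lemma homotopy colimit} to handle the infinite tree. The ``at most $2$'' bound is then just that the null-homotopic maps lie in the image of $p_*\colon\Hom(G,T)\to\Hom(G,H)$, and the exact count uses $\varepsilon_H$ in the bipartite case (as you do) and, in the non-bipartite case, the slick observation that an odd deck transformation $\alpha\in\pi_1(H)$ swaps the two components of $\Hom(G,T)$ while $p_*(\alpha f')=p_*(f')$, so the two components merge under $p_*$. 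Your reduction step --- folding leaves of the finite image subtree $T'$ until $\hat f$ becomes edge-valued --- is in effect a self-contained $\pi_0$-level re-derivation of Lemma~\ref{lemma tree tree}, and it neatly avoids the infinite-tree subtlety by passing to the finite subtree spanned by $\hat f(V(G))$; your observation that a leaf must be folded onto the vertex two steps along the longest path (not its unique neighbour, which is not looped) is exactly the point. Your counting of the $f_{(a,b)}$ classes replaces the paper's deck-transformation argument with explicit $\times$-homotopy moves $f_{(a,b)}$ to $f_{(a,b')}$ chained along a walk in $H$, using that a connected non-bipartite graph has walks of both parities between any two vertices. Both arguments are valid; the paper's is shorter once Lemma~\ref{lemma tree tree} is in hand, while yours is more elementary and makes the two classes concretely visible as the edge-valued homomorphisms.
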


The following lemma seems to be well-known, and easily follows from the fold theorem (see \cite[Theorem 3.3]{KozlovPAMS} for example) and Lemma~\ref{lemma homotopy colimit}.

\begin{lemma} \label{lemma tree tree}
Let $T$ be a (possibly infinite) tree having at least one edge. Then we have $\Hom(G,T) \simeq S^0$. Moreover, for two graph homomorphisms $f_0$, $f_1 \colon G \to T$, $f_0$ and $f_1$ belong to the same connected component of $\Hom(G,T)$ if and only if for some (or every) $x \in V(G)$, the distance $d_T(f_0(x), f_1(x))$ is even.
\end{lemma}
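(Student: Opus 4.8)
The plan is to reduce everything to the single edge $K_2 \subseteq T$ via the fold theorem and then read off $\pi_0$ from the well-understood complex $\Hom(G,K_2)$. Recall first that a multi-homomorphism $G \to K_2$ is automatically a graph homomorphism: if $\eta(v)=\{0,1\}$ and $w$ is a neighbour of $v$ (which exists since $G$ is connected with an edge), then $\eta(w)=\emptyset$, impossible. Hence $\Hom(G,K_2)$ is a discrete space, empty when $G$ is non-bipartite and two points, i.e. $\cong S^0$, when $G$ is bipartite. Since $T$ is bipartite, $\Hom(G,T)=\emptyset$ in the non-bipartite case; we therefore assume $G$ is bipartite, which is the case used in the paper. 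Fix a proper $2$-colouring $c\colon T\to K_2$; it induces $c_*\colon \Hom(G,T)\to\Hom(G,K_2)$, $f\mapsto c\circ f$. What must be shown is (i) that $c_*$ is a homotopy equivalence, so $\Hom(G,T)\simeq S^0$ and $\pi_0(c_*)$ is a bijection, and (ii) the parity characterization of the fibres of $\pi_0(c_*)$.

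For (i) with $T$ finite, I would fold leaves one at a time, arranging the folds to terminate at a prescribed edge $e=\{a,b\}$ of $T$: at each stage the current subtree $S\supseteq e$ still has $a\sim b$, and if $S\neq e$ it has a leaf $v\notin\{a,b\}$ whose neighbour $w$ satisfies $\deg_S(w)\ge 2$, so folding $v$ onto another neighbour of $w$ keeps $e$ an edge and strictly shrinks $S$. By the fold theorem \cite[Theorem 3.3]{KozlovPAMS} each fold makes $\Hom(G,S')$ a deformation retract of $\Hom(G,S)$, so $\Hom(G,e)\cong\Hom(G,K_2)$ is a deformation retract of $\Hom(G,T)$; as $c|_e\colon e\xrightarrow{\cong}K_2$, the map $c_*$ restricted to $\Hom(G,e)$ is an isomorphism onto $\Hom(G,K_2)$, and composing this with the deformation retraction shows $c_*$ itself is a homotopy equivalence. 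For infinite $T$ — which in every application of this lemma (e.g. $T=\tH$ or $T=\tG$) is locally finite, and I will assume this — I would instead exhaust $T=\bigcup_n T_n$ by finite subtrees with $T_0$ an edge and each $T_{n+1}$ obtained from $T_n$ by attaching one new leaf whose neighbour is already non-isolated in $T_n$; the same fold makes $\Hom(G,T_n)$ a deformation retract of $\Hom(G,T_{n+1})$, and $\Hom(G,T)=\bigcup_n\Hom(G,T_n)$ since every multi-homomorphism has finite support (this is where local finiteness enters). Then Lemma~\ref{lemma homotopy colimit} gives that $\Hom(G,T_0)=\Hom(G,K_2)\hookrightarrow\Hom(G,T)$ is a homotopy equivalence, and composing with the isomorphism $c_*|_{\Hom(G,T_0)}$ shows $c_*$ is a homotopy equivalence as before.

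For (ii): since $\pi_0(\Hom(G,K_2))$ is the two-element discrete set and $\pi_0(c_*)$ is a bijection, $f_0$ and $f_1$ lie in the same component of $\Hom(G,T)$ if and only if $c\circ f_0=c\circ f_1$, i.e. $c(f_0(x))=c(f_1(x))$ for every $x\in V(G)$. In a tree the distance between two vertices is congruent mod $2$ to the sum of their $c$-values, so this is equivalent to $d_T(f_0(x),f_1(x))$ being even for every $x$. Finally the parity $d_T(f_0(x),f_1(x))\equiv c(f_0(x))+c(f_1(x))\pmod 2$ is independent of $x$: if $x\sim x'$ in $G$ then $f_0(x)\sim f_0(x')$ and $f_1(x)\sim f_1(x')$ in $T$, so both summands flip and the sum is unchanged mod $2$; as $G$ is connected the parity is globally constant. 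This yields "for some $x$" $\Leftrightarrow$ "for every $x$" and finishes the proof.

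The main obstacle is the bookkeeping in the infinite case: one must start the exhaustion at an edge rather than a single vertex (otherwise the first inclusion is $\emptyset\hookrightarrow S^0$, not an equivalence), must verify that attaching a leaf to a non-isolated vertex gives an admissible fold, and must know $\Hom(G,T)=\bigcup_n\Hom(G,T_n)$, which needs multi-homomorphisms to have finite support and hence $T$ locally finite. Once these are in place, the fold theorem and Lemma~\ref{lemma homotopy colimit} do all the topological work, and the component count reduces to the elementary parity argument above.
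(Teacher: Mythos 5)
Your proof is correct and is exactly the approach the paper indicates (it cites the fold theorem and Lemma~\ref{lemma homotopy colimit} without detail), with the folding filtered so as to terminate at an edge and the parity characterization read off from $c_*$. The two caveats you flag — that $G$ must be bipartite for the conclusion to hold (otherwise $\Hom(G,T)=\emptyset$, an implicit hypothesis satisfied in every application in the paper), and that the exhaustion argument for infinite $T$ uses local finiteness so that multi-homomorphisms have finite support — are both genuine and correctly handled, and local finiteness does hold for $\tH$ since $H$ is finite.
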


\begin{proof}[Proof of Lemma~\ref{lemma number of trivial}]
In this proof, we call a graph homomorphism $f$ \emph{null-homotopic} if the map $f_* \colon \pi_1(G) \to \pi_1(H)$ induced by $f$ is trivial.

Let $p \colon T \to H$ be the usual universal covering. Suppose that $f \colon G \to H$ is null-homotopic. By the usual lifting criterion \cite[Proposition~1.33]{Hatcher}, there is a graph homomorphism $f' \colon G \to T$ such that $p \circ f' = f$. Hence, $f$ is contained in the image of $p_* \colon \Hom(G, T) \to \Hom(G,H)$. Since $\Hom(G,T) \simeq S^0$ (Lemma~\ref{lemma tree tree}), the number of components of $\Hom(G,H)$ containing null-homotopic graph homomorphisms is at most $2$.

We now show (1). Suppose that $H$ is bipartite and let $\varepsilon \colon H \to K_2$ be a graph homomorphism. Then there are two null-homotopic graph homomorphisms $f_0, f_1 \colon G \to H$ such that $\varepsilon \circ f_0 \ne \varepsilon \circ f_1$. Since $\Hom(G, K_2) = S^0$, $f_0$ and $f_1$ belong to distinct components of $\Hom(G,H)$. Hence, the number of components containing null-homotopic graph homomorphisms is $2$. This completes the proof of (1).

Next we show (2). Suppose that $H$ is non-bipartite. Then there is an odd element $\alpha \in \pi_1(H)$. Then $f'$ and $\alpha f'$ belong to different components of $\Hom(G,T)$ since the length of path joining $f'(x)$ to $\alpha f'(x)$ is odd for every $x \in V(G)$ (see Lemma~\ref{lemma tree tree}). Since $p_H \circ f' = p_H\circ(\alpha f')$, $p_*$ sends two components of $\Hom(G,T)$ to the same component of $\Hom(G,H)$. Thus the number of components containing null-homotopic graph homomorphisms is $1$.
\end{proof}

\appendix

\section{Alternative proof of Theorem~\ref{theorem Wrochna}} \label{subsection alternative}

In Subsection~\ref{subsection proof of main theorem}, we proved Theorem~\ref{theorem main}, by using Theorem~\ref{theorem Wrochna}. In this appendix, we present an alternative proof of Theorem~\ref{theorem Wrochna} from the perspective of $2$-covering theory.

For a group $\Gamma_0$ and for a subset $S$ of $\Gamma_0$, let $Z_{\Gamma_0}(S)$ be the subgroup of $\Gamma_0$ consisting of elements of $\Gamma_0$ that commute with every element of $S$.

\begin{lemma} \label{lemma commute 1}
Let $F$ be a free group and $K$ a subgroup of $F$. Then the following hold:
\begin{enumerate}
\item If $K$ is a non-abelian free group, then $Z_F(K)$ is trivial.

\item If $K$ is an infinite cyclic group, then $Z_F(K)$ is an infinite cyclic group.
\end{enumerate}
\end{lemma}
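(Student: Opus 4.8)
The plan is to deduce both parts from a single elementary fact: \emph{a free group with nontrivial centre is infinite cyclic}. Granting this, and using that subgroups of free groups are free (\cite[Theorem~1A.4]{Hatcher}), everything reduces to the following observation, which I would isolate first. \textbf{Reduction step:} \emph{if $L\le F$ is a subgroup containing a nontrivial element $z$ that commutes with every element of $L$, then $L$ is infinite cyclic} — indeed $L$ is free, $z$ is a nontrivial central element of $L$, hence $L$ has rank $1$. To justify the underlying fact itself I would argue directly from normal forms: if $F$ is free of rank $\ge 2$ and $z=x_1x_2\cdots x_n$ is a nontrivial reduced word, pick a basis letter $s$ with $s\notin\{x_1^{-1},x_n^{-1}\}$; then $sz$ and $zs$ are reduced of length $n+1$, so $sz=zs$ forces $z=s^{\,n}$, and then comparing $zt=tz$ for a second basis letter $t$ gives a contradiction. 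Thus a non-abelian free group has trivial centre, so a free group with nontrivial centre must be infinite cyclic. (This is classical and could equally well be cited to a standard text on combinatorial group theory rather than proved.)

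For part (2), write $K=\langle a\rangle$ with $a\ne 1$. An element of $F$ commutes with every element of $\langle a\rangle$ if and only if it commutes with $a$, so $Z_F(K)$ equals the centralizer $C_F(a)$, and $a\in C_F(a)$. Applying the Reduction step with $L=C_F(a)$ and $z=a$ — legitimate, since by definition every element of $C_F(a)$ commutes with $a$ — shows that $Z_F(K)=C_F(a)$ is infinite cyclic. For part (1), choose $a,b\in K$ with $ab\ne ba$, which exist because $K$ is non-abelian, and suppose $z\in Z_F(K)$ is nontrivial. Set $L=\langle K,z\rangle\le F$. Since $z$ commutes with every element of $K$ and with itself, it commutes with every element of $L$; by the Reduction step $L$ is infinite cyclic, hence abelian, so $a$ and $b$ commute inside $L$ — a contradiction. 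Therefore $Z_F(K)$ is trivial.

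The only substantive ingredient is the Reduction step, i.e.\ the triviality of the centre of a non-abelian free group; the normal-form argument sketched above is the point I would take the most care over, although in the final write-up it may be cleanest simply to cite it as a standard fact and spend the words instead on the two short formal arguments for (1) and (2).
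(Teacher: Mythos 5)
Your proof is correct and takes essentially the same approach as the paper: both arguments hinge on the fact that a free group with nontrivial centre is infinite cyclic (equivalently, that a non-abelian free group has trivial centre), and both consider the auxiliary subgroup $\langle K, z\rangle$ for part (1) and observe that a generator of $K$ is a nontrivial central element of $Z_F(K)$ for part (2). Your isolation of the ``reduction step'' is a clean way of packaging the common ingredient, but the underlying argument matches the paper's.
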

\begin{proof}
We first show (1). Let $x \in Z_F(K)$ and consider the subgroup $K'$ of $F$ generated by $K \cup \{x\}$. Then, $K'$ is a non-abelian free group, and $x$ belongs to the center of $K'$. Since a non-abelian free group has the trivial center, we conclude that $x$ is trivial.

Next we show (2). Let $x \in K$ be a generator of $K$. Then $x \in Z_F(K)$ and hence $Z_F(K)$ is a non-trivial subgroup of $F$. If $Z_F(K)$ is not an infinite cyclic group, then $Z_F(K)$ is a non-abelian free group and $x \in Z_F(Z_F(K))$. Since $x$ is non-trivial, this contradicts (1).
\end{proof}

\begin{proof}[Alternative proof of Theorem~\ref{theorem Wrochna}]
Consider the following commutative diagram:
\[ \xymatrix{
\pi_1(G,v) \ar[r]^{f_*} \ar[d] & \pi_1(H,w) \ar[d]^{\cong} \\
\pi^2_1(G,v) \ar[r]^{f_*} & \pi_1^2(H,w).
}\]
The vertical arrows are the natural quotient maps (see Subsection~\ref{subsection 2-fundamental group}). Since $H$ is square-free, the right vertical arrow is an isomorphism (Lemma~\ref{lemma square-free isomorphism}). Hence $K = f_*(\pi_1(G,v))$ coincides with $f_*(\pi_1^2(G,v))$. Corollary~\ref{corollary commutative} implies $\Pi(f,v) \subset  Z_{\pi^2_1(H,w)}(K)$. Thus, (1) and (2) of Theorem~\ref{theorem Wrochna} follows from Lemma~\ref{lemma commute 1}.

Next we show (3). Since $K = f_*(\pi_1^2(G,v))$ is trivial, Lemma~\ref{lemma lifting} implies that there is a graph homomorphism $f' \colon G \to \tH$ such that $p_H \circ f' = f$. Since $H$ is square-free, $\tH$ is a tree. Let $\alpha \in \pi_1^2(H,w)_{ev}$. It follows from Lemma~\ref{lemma tree tree} that $f'$ and $\alpha f'$ belong to the same connected component of $\Hom(G, \tH)$. By Proposition~\ref{proposition homotopy}, there is a $\times$-homotopy $h \colon G \times I_n \to \tH$ from $f'$ to $\alpha f'$. Then $p_H \circ h$ is a $\times$-homotopy from $f$ to $f$. By Lemma~\ref{lemma description of action}, the $2$-homotopy class realized by $p_H \circ h$ is $\alpha$. Thus we have $\pi_1^2(H,w)_{ev} \subset \Pi(f,v)$. By the definition of $\Pi(f,v)$, it is clear that $\Pi(f,v) \subset \pi_1^2(H,w)_{ev}$. This completes the proof of (3).
\end{proof}

\section*{Acknowledgement}
The author was supported in part by JSPS KAKENHI Grant Numbers JP23K12975. The author thanks Soichiro Fujii, Kei Kimura and Yuta Nozaki for fruitful discussion. In particular, the discussions with them were important for coming up with the formulation of Theorem 1.2. The author also thanks Shun Wakatsuki for useful comments. The author is grateful to the anonymous referees for their helpful comments, which improve the readability of the manuscript.

\section*{Data availability}
No data was used for the research described in the article.

\section*{Declaration of generative AI and AI-assisted technologies in the manuscript preparation process}

During the preparation of this work the author used ChatGPT and Gemini in order to improve the readability of the manuscript. After using these tools, the author reviewed and edited the content as needed and takes full responsibility for the content of the published article.

\bibliographystyle{abbrvurl} %
\bibliography{reference} %

\end{document}